\definecolor{darkergreen}{rgb}{0.0, 0.5, 0.0}
\numberwithin{equation}{section}
\def\theequation{\arabic{section}.\arabic{equation}}
\newcommand{\be}{\begin{eqnarray}}
\newcommand{\ee}{\end{eqnarray}}
\newcommand{\ce}{\begin{eqnarray*}}
\newcommand{\de}{\end{eqnarray*}}
\newtheorem{theorem}{Theorem}[section]
\newtheorem{lemma}[theorem]{Lemma}
\newtheorem{remark}[theorem]{Remark}
\newtheorem{definition}[theorem]{Definition}
\newtheorem{proposition}[theorem]{Proposition}
\newtheorem{Examples}[theorem]{Example}
\newtheorem{corollary}[theorem]{Corollary}
\newcommand{\LL}{\mathscr{L}}
\def\Wick#1{\,\boldsymbol{\colon}\!\! #1 \boldsymbol{\colon}\!}
\def\WickC#1{\,\boldsymbol{\colon}\!\! #1 \boldsymbol{\colon}\!\!_{C}}
\def\PPhi{\mathbf{\Phi}}
\def\eps{\varepsilon}
\def\p{\partial}
\def\[{{\Big[}}
\def\]{{\Big]}}
\def\<{{\langle}}
\def\>{{\rangle}}
\def\({{\Big(}}
\def\){{\Big)}}
\def\by{{\mathbf{y}}}
\def\bx{{\mathbf{x}}}
\def\dif{{\mathord{{\rm d}}}}
\def\no{\nonumber}
\def\={&\!\!=\!\!&}
 \newcommand{\eqdef}{\;\stackrel{\mbox{\tiny def}}{=}\;}
\def\bC{{\mathbf C}}
\def\cE{{\mathcal E}}
\def\cF{{\mathcal F}}
\def\cG{{\mathcal G}}
\def\cH{{\mathcal H}}
\def\cI{{\mathcal I}}
\def\cN{{\mathcal N}}
\def\cP{{\mathcal P}}
\def\cQ{{\mathcal Q}}
\def\cS{{\mathcal S}}
\def\mN{{\mathbb N}}
\def\mR{{\mathbb R}}
\def\mT{{\mathbb T}}
\def\mZ{{\mathbb Z}}
\def\bP{{\mathbf P}}
\def\1{{\mathbf{1}}}
\def\mm{{\mathbf{m}}}
\def\sD{{\mathscr D}}
\def\sL{{\mathscr L}}
\def\E{\mathbf E}
\def\geq{\geqslant}
\def\leq{\leqslant}
\def\ge{\geqslant}
\def\eps{\varepsilon}
\def\p{\partial}
\def\[{{\Big[}}
\def\]{{\Big]}}
\def\<{{\langle}}
\def\>{{\rangle}}
\def\({{\Big(}}
\def\){{\Big)}}
\def\by{{\mathbf{y}}}
\def\bx{{\mathbf{x}}}
\def\dif{{\mathord{{\rm d}}}}
\def\no{\nonumber}
\def\={&\!\!=\!\!&}
\def\bt{\begin{theorem}}
\def\et{\end{theorem}}
\def\bl{\begin{lemma}}
\def\el{\end{lemma}}
\def\br{\begin{remark}}
\def\er{\end{remark}}
\def\bx{\begin{Examples}}
\def\ex{\end{Examples}}
\def\bd{\begin{definition}}
\def\ed{\end{definition}}
\def\bp{\begin{proposition}}
\def\ep{\end{proposition}}
\def\bc{\begin{corollary}}
\def\ec{\end{corollary}}
\def\geq{\geqslant}
\def\leq{\leqslant}
\def\ge{\geqslant}
\def\bP{{\mathbf P}}
 \def\R{\mathbb R}
 \def\R{\mathbb R}    
\def\N{\mathbb N}  \def\m{{\bf m}}
\def\<{\langle} \def\>{\rangle}
\newenvironment{nouppercase}{%
  \renewcommand{\uppercasenonmath}[1]{}}{}
\tikzset{
        dot/.style={circle,fill=black,inner sep=0pt, outer sep=0.7pt, minimum size=1mm},
        %Phi/.style={white!40!red,thick,snake=expanding waves,segment length=1.2pt,segment angle=15},
        Phi/.style={white!40!red,thick,snake=coil,segment amplitude=0.6pt, segment length=2pt},
         Z/.style={black!40!green,thick,snake=coil,segment amplitude=0.6pt, segment length=2pt},
       %phi/.style={star,star points=4,fill=white!40!red, inner sep=0pt, minimum size=1.9mm}
        %phi/.style={ellipse, minimum height=0.1mm,minimum width=0.2mm,draw=white!40!red, fill=white!40!red, minimum size=1mm}
        C/.style={thick,black},
          Cr/.style={thick,black!20!red},
            Cg/.style={thick,black!40!green},
            K/.style={ultra thick,white!20!blue},
       }
\begin{document}

\title[Large $N$ limit and $1/N$ expansion of invariant observables in $O(N)$ linear $\sigma$-model via SPDE]{\LARGE Large $N$ limit and $1/N$ expansion of invariant observables in $O(N)$ linear $\sigma$-model via SPDE}

\author[Hao Shen]{\large Hao Shen}
\address[H. Shen]{Department of Mathematics, University of Wisconsin - Madison, USA}
\email{pkushenhao@gmail.com}

\author[Rongchan Zhu]{\large Rongchan Zhu}
\address[R. Zhu]{Department of Mathematics, Beijing Institute of Technology, Beijing 100081, China}
\email{zhurongchan@126.com}
\author[Xiangchan Zhu]{\large Xiangchan Zhu}
\address[X. Zhu]{ Academy of Mathematics and Systems Science,
Chinese Academy of Sciences, Beijing 100190, China}
\email{zhuxiangchan@126.com}

\begin{abstract}
In this paper we continue the study of large $N$ problems
for the Wick renormalized  linear sigma model, i.e. $N$-component $\Phi^4$ model, in two spatial dimensions, using stochastic quantization methods and Dyson--Schwinger equations.
We identify the large $N$ limiting law of a collection of Wick renormalized $O(N)$ invariant observables. In particular, under a suitable scaling, the quadratic
observables converge in the large $N$ limit to a mean-zero (singular) Gaussian field denoted by $\cQ$ with an explicit covariance;
and the observables which are $2n$-th renormalized powers of the fields
converge in the large $N$ limit to suitably renormalized
$n$-th powers of $\cQ$. 
%\rmk{Here, we highlight that even though $\Phi$ converges to the Gaussian free field, the distribution of $\cQ$ is distinct from the limit obtained by replacing $\Phi$ with the Gaussian free field in the renormalized observables using Wick renormalization. Additionally, the limit of the renormalized  observables of order $2n$ differs from $\Wick{\cQ^n}$, which is defined using the conventional Wick product.}\xiang{i add some words}
The quartic interaction term of the model has no effect on the large $N$ limit of the field $\Phi$, but has nontrivial contributions to the limiting law of the observables, and the renormalization of the $n$-th powers of $\cQ$ in the limit has an interesting finite shift from the standard one.

Furthermore, we derive the $1/N$ asymtotic expansion for the $k$-point functions of the quadratic observables by  employing graph representations and analyzing the order of each graph from Dyson--Schwinger equations.
Finally, turning to the stationary solutions to the stochastic quantization equations, 
with the Ornstein--Uhlenbeck process being  the large $N$ limiting dynamic, 
we derive here its next order correction in stationarity, as described by an SPDE with the right-hand side having explicit fixed-time marginal law which involves the above field $\cQ$.
\end{abstract}

\subjclass[2010]{60H15; 35R60}
\keywords{}

\date{\today}

\begin{nouppercase}
\maketitle
\end{nouppercase}

\setcounter{tocdepth}{1}
\tableofcontents

\section{Introduction}
\label{sec:intro}

The $O(N)$ linear $\sigma$-model is the first paradigm for studying large $N$ problems in quantum field theory (QFT) with $N$ interacting fields.
The model was first
introduced and studied by Wilson \cite{wilson1973quantum} and Coleman--Jackiw--Politzer \cite{coleman1974},
which is an $N$-component generalization of the $\Phi^4_d$ model, given by the  (formal) measure
\begin{equation}\label{e:Phi_i-measure}
	\dif\nu^N(\Phi)\eqdef \frac{1}{C_N}\exp\bigg(-\frac12\int_{\mathbb T^d} \sum_{j=1}^N|\nabla \Phi_j|^2+\frac{\mm}2 \sum_{j=1}^N\Phi_j^2
	+\frac{1}{4N} \Big(\sum_{j=1}^N\Phi_j^2\Big)^2 \dif x\bigg)\mathcal D \Phi
\end{equation}
over $\R^N$ valued fields $\Phi=(\Phi_1,\Phi_2,...,\Phi_N)$ and $C_N$ is a normalization constant.
We will consider the case where the underlying space is a torus and $d=2$ in this paper. In this setting the  interaction should be Wick renormalized   $\Wick{\big(\sum_{j=1}^N\Phi_j^2\big)^2}$
for the measure to be rigorously defined.
The model has an $O(N)$ symmetry which will play an important role throughout the paper: that is, the measure is invariant
under any rotation of the $N$ components of $\Phi$.
In large $N$ problems one aims to show properties of the model as $N\to \infty$.

For every fixed $N\in \N$, the measure
$\nu^N$ can be constructed rigorously as the unique invariant measure of the following
system of equations on the two-dimensional torus $\mathbb T^2$
\begin{equation}\label{eq:21i}
	\LL \Phi_i= -\frac{1}{N}\sum_{j=1}^N \Wick{\Phi_j^2\Phi_i}+\sqrt 2\xi_i,
\end{equation}
where $\LL=\p_t-\Delta+\mm$  with $\mm\geq0$, and $i\in\{1,\cdots,N\}$.
The collection $(\xi_i)_{i=1}^N$ consists of $N $ independent space-time white noises on a stochastic basis, i.e. $(\Omega,\mathcal{F},\mathbf{P})$ with a filtration.
%Since $d=2$, the system \eqref{eq:21i} requires renormalization, and the formal product $\Phi_j^2\Phi_i$ will be interpreted
%as the Wick product
%$\Wick{\Phi_j^2\Phi_i}$ whose definition is postponed to Section \ref{s:uni}.
The Wick products will be precisely reviewed in Section~\ref{s:uni}.

The above connection between a quantum field theory \eqref{e:Phi_i-measure} and a stochastic PDE \eqref{eq:21i} is the well-known stochastic quantization. This connection brings novel techniques into the study of large $N$ problems in QFT, such as singular SPDE theories, PDE a priori (uniform) estimates,  mean field limit technique, etc. which were first developed in \cite{SSZZ2d} and \cite{SZZ3d}. We refer to \cite[Section~1]{SSZZ2d} or \cite{MR4472829} for more discussion on the background, motivation, and previous results for large $N$ problems in quantum field theory.
With these new techniques  \cite{SSZZ2d}  proved that the large $N$ limit of $\nu^N$ is given by the Gaussian free field, i.e. the $k$-marginal distribution of $\nu^N$ converges to the Gaussian field $\nu^{\otimes k}$ with $\nu=\cN(0,(\mm-\Delta)^{-1})$.
This result was also extended to 3D in \cite{SZZ3d}.
This means that by only observing the field $\Phi_i$, one does not see any effect from the  interaction term $\Wick{\frac1N\big(\sum_{j=1}^N\Phi_j^2\big)^2}$ in the large $N$ limit.

In this paper we proceed to study the large $N$ behavior of the  following  observables, also known as composite fields
\begin{equ}\label{e:ob}
%	\frac{1}{N^{1/2}}\sum_{i=1}^N \Wick{\Phi_i^2}\;,
%	\qquad
%	\frac{1}{N} \Wick{\Big( \sum_{i=1}^N \Phi_i^2 \Big)^2} \;,	\qquad
	\frac{1}{N^{n/2}} \Wick{\Big( \sum_{i=1}^N \Phi_i^2 \Big)^{n}} \;,
\end{equ}
with $n\geq1$, $\Phi=(\Phi_i)_{1\leq i\leq N}\thicksim \nu^N$ for the invariant measure $\nu^N$. The precise definitions of these observables (in particular the Wick product) are given in \eqref{ob0}-\eqref{obn} below. They are basic and natural observables of the model since they are $O(N)$ invariant.
When $n=1,2,\cdots$, we will call them the ``first observable'', ``second observable'', etc.
Remark that  \cite{MR578040} and \cite[Theorem~1.3]{SSZZ2d}
derived explicit formulas for
 the large $N$ limit of the two-point correlations of $\frac{1}{N^{1/2}}\sum_{i=1}^N \Wick{\Phi_i^2}$ and the expectations of $\frac{1}{N} \Wick{\big( \sum_{i=1}^N \Phi_i^2 \big)^2}$, which are shown to be different from those correlations when $\Phi_i$ is replaced by the Gaussian field $\cN(0,(\mm-\Delta)^{-1})$;
this already indicates that these observables receive nontrivial corrections from the interaction term in the  large $N$ limit.

We obtain here the complete description of the
limiting distributions of the observables \eqref{e:ob} as $N\to \infty$,
as  stated in the following main theorem. It relies on deriving exact formulas for correlations in the large $N$ limit.
The availability of such exact formulas is an example that such models have exact solvability  in the large $N$ limit.
Set
%$\Wick{\PPhi^2}\eqdef \sum_{i=1}^N \Wick{\Phi_i^2}$.
$$
\Wick{(\PPhi^2)^n} \eqdef \Wick{\Big(\sum_{i=1}^N\Phi_i^2\Big)^n}.
$$
Let  $H^s:=B^s_{2,2}$ for $s\in\mR$ where $B^s_{2,2}$ is the Besov space which we review in the appendix.

\bt\label{main:1} There exists $m_0>0$ such that for $\mm\geq m_0$, the following statements hold.
\begin{enumerate}
	\item
	As $N\to \infty$,
	the observables $(\frac1{\sqrt N}\Wick{\PPhi^2})_N$ converge in law in $H^{-\kappa}$ for any $\kappa>0$ to  a  mean zero Gaussian field $\cQ$
	 with covariance  $G(x-y)$ determined by\footnote{Compared to \cite{SSZZ2d} we have an extra factor $\frac12$ in front of $|\nabla \Phi_j|^2$ here, and as a result we do not have an extra factor $2$ in \eqref{e:G}.}
	 \begin{align}\label{e:G}
	 C^2*G+G=2C^2,
	 \end{align}
	 where $C=(\mm-\Delta)^{-1}$.
	\item
	As $N\to \infty$,
	the observables $(\frac1N\Wick{(\PPhi^2)^2})_N$ converge in law in $H^{-\kappa}$ for any $\kappa>0$ to  the random field
	$$\WickC{\cQ^2}\eqdef \lim_{\eps\to0}(\cQ^2_\eps-2C^2_\eps(0))\;,$$
where $\cQ_\eps=\cQ*\rho_\eps$	and $C_\eps=C*\rho_\eps$ with some mollifier $\rho_\eps$ and  the limit is understood in $\bC^{-\kappa}$ $\bP$-a.s. for $\kappa>0$.
	\item
	For $\mathbf{n}=(n_1,\dots,n_m)\in\mN^m, m\in\mN$, as $N\to \infty$,
	$$
	\Big\{\Big(\frac1{ N^{n_1/2}}\Wick{(\PPhi^2)^{n_1}} \;,\;
	\cdots \;,\;
	\frac1{ N^{n_m/2}}\Wick{(\PPhi^2)^{n_m}}\Big)\Big\}_N$$
	converge jointly in law to
	$$(\WickC{\cQ^{n_1}} \;,\; \cdots \;,\; \WickC{\cQ^{n_m}})$$
	in $(H^{-\kappa})^m$ for $\kappa>0$ with
\begin{align}\label{lim:n1}
	\WickC{\cQ^{n}}\eqdef \lim_{\eps\to0} (2C_\eps^2(0))^{n/2}H_n((2C_\eps^2(0))^{-1/2}\cQ_\eps)\qquad n\in\mN.
\end{align}
where the limit is understood in $\bC^{-\kappa}$ $\bP$-a.s. for $\kappa>0$ and $H_n$ is the Hermite polynomial defined in \eqref{def:Her}. %$e^{x^2/2}(-1)^n\partial_x^n e^{-x^2/2}$
%Here   $\Wick{\cQ^n}$ for $n\in \mN$
%are the Wick renormalized powers by $G$ of the field $\cQ$ which are precisely
% defined in Proposition \ref{pro:1}.
\end{enumerate}
\et

%\hao{I haven't looked the discussion below in details. Just to quickly understand for myself: is it possible that there is some other way to renormalize $\frac{1}{N} \big( \sum_{i=1}^N \Phi_i^2 \big)^2$ such that it actually converges to $\Wick{\cQ^2}$?}

We will prove the convergence of the above limit \eqref{lim:n1} in \eqref{lim:n}.
These results can be derived from Theorem \ref{th:1} and Theorem \ref{th:o}. Note that part (3) is the most general case, while choosing $n=1$ and $n=2$ recovers cases (1) and (2) respectively.

Theorem~\ref{main:1} reveals interesting structures of these $O(N)$ invariant observables  in the large $N$ limit, which we briefly explain here. First of all,
in part (1), $2C^2$ and $G$ are both singular at the origin,
but we will show that their difference $C^2*G$ is smooth everywhere including the origin.
We recall that $2C^2(x-y)$ is the correlation
for $\frac1{\sqrt N} \Wick{\sum_{i=1}^N Z_i^2}$ where $Z_i \sim \cN(0,(\mm-\Delta)^{-1})$ are i.i.d. Gaussian fields;
so there is
indeed a nontrivial (order one) interaction between
the observables $(\frac1{\sqrt N}\Wick{\PPhi^2})_N$
and the non-Gaussian term $\frac1N\Wick{(\PPhi^2)^2}$ of the measure $\nu^N$.

Also, one has $\WickC{\cQ^2}=\Wick{\cQ^2}-C^2*G(0)$, where $\Wick{\cQ^2}$ is the mean-zero Wick product (formally ``renormalized by $G(0)$'') defined in Proposition \ref{pro:1}. Consequently,   part (2) is consistent with (but much stronger than) the formula for $\lim_{N\to \infty} \E \frac{1}{N} \Wick{\big( \sum_{i=1}^N \Phi_i^2 \big)^2}$ obtained in \cite[Theorem~1.3]{SSZZ2d}.
%The kernel $G$ in equation \eqref{e:G} indicates that there is an order $1$ interaction between the observable $\frac1{\sqrt N}\Wick{\PPhi^2}$ and the nonlinear part $\frac1N\Wick{(\PPhi^2)^2}$ from the measure $\nu^N$.

Moreover, formally we can view $\frac{1}{N^{n/2}}\Wick{(\PPhi^2)^n}$ as the ``$n$-th power'' of $\frac{1}{\sqrt{N}}\Wick{\PPhi^2}$, except that one has to take into account suitable renormalization.    To elaborate further, we present the following formal calculations using Wiener chaos decomposition (c.f. \cite[Lemma 10.3]{Hairer14})  for the square of $\Wick{\PPhi^2}$ 
\begin{equ}[e:formal4]
		\frac1{N}\Wick{(\PPhi^2)^2}=\Big(\frac1{\sqrt N}\Wick{\PPhi^2}\Big)^2-2C^2(0)-4C(0)\frac1N \Wick{\PPhi^2}
		\quad \to \quad \cQ^2-2C^2(0),
\end{equ}
where we pretend that $(\Phi_i)$ are  independent Gaussian free fields, which is indeed the case in the  large $N$ limit, and we use law of large numbers to formally deduce that the last term in the first line vanishes.  This observation suggests that the appropriate renormalization constant in the large N limit should be given by $2C^2(0)$  instead of $G(0)$, i.e. The renormalization constant is still obtained by substituting $\Phi_i$ with its large $N$ limit the Gaussian free field $Z_i$, while the correlation of the observable is influenced by the nonlinear interaction. The general heuristic argument for any $n\in \mathbb{N}$ can be derived through induction. For more detailed explanations, we refer to Section \ref{sec:4.3}. However, it is important to note that the above calculation is purely formal, as we treat $\Phi$ as its large $N$ limit, i.e.,  independent Gaussian free fields, and apply Gaussian property. Additionally, it is worth mentioning that $C^2(0)$ is infinite, which renders it nonsensical in the context of the above calculation.

We prove these results by combining uniform in $N$ estimates from the stochastic quantization equations \eqref{eq:21i} and integration by parts formula (also called Dyson-Schwinger equations). This strategy has been shown to be fruitful in studying large $N$ problems  in \cite{SSZZ2d,SZZ3d}, as well as perturbation theory and small scale behavior of correlations in \cite{SZZ21}, and it will be further developed in this paper.

More precisely, to establish these results, we start by improving uniform estimates for the $H^{-\kappa}$ norm of $\frac{1}{N^{n/2}}\Wick{(\PPhi^2)^n}$ for $n\in\mathbb{N}$ and $\kappa>0$ using stochastic quantization equations \eqref{eq:21i}  (as stated in Proposition \ref{prop1}), where we also upgrade from the second moment bounds in \cite{SSZZ2d} to higher moment bounds. These moment bounds  play two crucial roles:
First, they are important for proving tightness of the observables in \eqref{e:ob}.
The uniform estimates ensure that the observables remain bounded as $N$ tends to infinity, which is essential for studying their limiting behaviors.
Second, these estimates allow us to  prove that the error terms from integration by parts formula converge to zero as $N$ approaches infinity.

Integration by parts (IBP for short) is another crucial component in our analysis.
\footnote{Dyson-Schwinger equations were also used by physicists to study large $N$ problems, see \cite{symanzik1977}, on a formal level. The aformentioned rigorous result in \cite[Section~3]{MR578040} used integration by parts twice, and in this paper we perform IBP in a more recursive and systematic way, especially in the proof of Theorem~\ref{main} below.}
 By selecting appropriate test functions, we can derive a set of recursive equations (as given in \eqref{eq:inductive2}) for the large $N$ limit of the $k$-point functions associated with the observables in \eqref{e:ob}.
These recursive equations provide a systematic way to compute the limiting behavior of these $k$-point functions as $N$ becomes large. In particular, we can identify the solutions of these recursive equations as the $k$-point functions of $\WickC{\cQ^n}$.
 This allows us to understand the statistical properties and correlations among the observables as $N$ tends to infinity.
 
\medskip

%As indicated in Theorem \ref{main:1}, the large $N$ limit of the observables relies on the behavior of the first observable. Therefore, the second objective of this paper is to delve deeper into the intricate properties of the $k$-point correlations of the first observable.

%In order to achieve this, we employ a  $1/N$ expansion approach to study the $k$-point correlations of $\frac{1}{\sqrt{N}}\Wick{\PPhi^2}$. This $1/N$ expansion provides valuable insights into the behavior of the $k$-point correlations of the first observable and helps us understand the leading and subleading terms as $N$ becomes large. %It allows us to uncover the intricate structure of these correlations and sheds light on their dependence on the system size.
%\hao{The above two paragraphs sound like ``Theorem \ref{main:1} (2)(3) rely on $1/N$ expansion of the first observable''? Is this really what you mean?} \zhu{I remove the last sentence. Is it fine?}\hao{Here you just want to motivate the next result right? Maybe we can write sth like}

The above strategy, with extra effort, allows us to study more intricate properties of the model and its observables.  As the second objective of this paper we prove a  $1/N$ asymptotic expansion for the $k$-point correlations of $\frac{1}{\sqrt{N}}\Wick{\PPhi^2}$. This $1/N$ expansion provides valuable insights into the behavior of the $k$-point correlations of these observables and helps us understand the leading and subleading terms as $N$ becomes large. In physics such $1/N$ expansions are often  powerful calculation tools   in the study of critical phenomena, where in the vicinity of critical points
 the usual perturbative expansions (in the coupling constant of the model)
are not informative (see e.g. \cite[Section~1]{Klebanov2014} for some discussion). Our theorem below gives an error bound on 
this $1/N$ expansion.

We define the $k$-point correlations.
Denote $\Phi_\eps\sim \nu_\eps^N$ with $\nu_\eps^N$ being a lattice approximation to \eqref{e:Phi_i-measure} (see \eqref{eq:Phiin}). For $k\in\mN$ we
define the $k$-point correlation function for the observable $\frac1{\sqrt{N}}\Wick{\PPhi^2}$ as
\begin{align}\label{def:fkN}
	\<f_k^N,\varphi\>=\lim_{\eps\to0}\int \E\Big(\prod_{i=1}^k\cE^\eps\frac{1}{\sqrt N}\Wick{\PPhi^2_\eps}(y_i)\Big)\varphi(y_1,\dots,y_k)\prod_{i=1}^k\dif y_i.
\end{align}
for every $\varphi\in \cS(\mT^{2k})$, where $\cE^\eps$ is the extension operator from discrete to continuum introduced in \eqref{def:E}. Our second main result is the following $1/N$ expansion of $f_k^N$.

\bt\label{main} Let $\m$ be as in Theorem \ref{main:1}.
For any $k \ge 1$ and $p\geq1$
we have the following representation for the $k$-point correlation
\begin{equ}
	f^N_{k}  = \sum_{n=0}^p  \frac{1}{N^{n}} F_{n}^{k,1} +  \frac{1}{N^{p+1}} R_{p+1}^{k,1},\quad k\in 2\mN,
\end{equ}
and
\begin{equ}
	f^N_{k}  = \sum_{n=0}^p  \frac{1}{N^{n+1/2}} F_{n}^{k,2} +  \frac{1}{N^{p+3/2}} R_{p+1}^{k,2},\quad k\in 2\mN-1,
\end{equ}
where $F_{n}^{k,1}, F_{n}^{k,2}$ only depend on the Green's function $C=(\mm-\Delta)^{-1}$ and are independent of $N$,  and
\begin{align*}
	\|R_{p+1}^{k,1}\|_{(H^{-\kappa})^k}+\|R_{p+1}^{k,2}\|_{(H^{-\kappa})^k}\lesssim1,
\end{align*}
with the proportional constant independent of $N$.
\et

Based on the results, we observe that the odd moments of $\frac{1}{\sqrt{N}}\Wick{\PPhi^2}$ are of at least the order of $\frac{1}{\sqrt{N}}$, while the even moments are of order $1$.  This aligns with the order of the moments when replacing $\Phi$ by its large $N$ limit and also is consistent with the formal Taylor  expansion  for the density $\frac{\dif \nu^N}{\dif \nu^{\otimes N}}\propto\exp(-\frac1{4N}\int\Wick{(Z_j^2)^2}\dif x)$ of $\nu^N$ w.r.t. the Gaussian free field (see \cite[Section 1.3]{SZZ3d} for more discussion).
The above bound on $R_{p+1}^{k,1}$, $R_{p+1}^{k,2}$ shows that the expansions are asymptotic.
We note that a  $1/N$ expansion for the infinite volume pressure i.e. vacuum energy  was  derived in \cite[Theorem~1]{MR578040}; the proof used a ``dual'' representation of the partition function and tools such as chessboard estimates from constructive quantum field theory (see also  \cite{MR661137} and more recent paper \cite{ferdinand2022borel} for $1/N$ expansion and Borel summability for Schwinger functions via similar approach or loop vertex expansion). Here, in Theorem \ref{main}, we are considering the $k$-point functions
of the first observable 
for {\it arbitrary} $k$. 
Our approach  combining the Dyson--Schwinger equations with the stochastic quantization equations 
 is new and powerful once all the a priori estimates are available, and the method can be easily extended to obtain a $1/N$ expansion for the $k$-point functions of more general observables.
	
%\hao{It seems \cite{MR661137} obtained similar result. Maybe we have to cite it.}

Theorem \ref{main} follows from Theorem \ref{m:5} and Proposition \ref{pr:N}.
A more delicate analysis than that for Theorem \ref{main:1}
 is required for the $k$-point correlation functions. Instead of directly taking the limit as $N\to\infty$, we need to perform a more careful analysis.

Our strategy involves introducing a graphic representation for each term obtained from the integration by parts (IBP) procedure. This graphic representation allows us to visualize and analyze the structure of the terms. To control these graphs, we apply a priori estimates based on uniform estimates from stochastic quantization equations, similar to the approach used in \cite{SZZ21}.

However, the procedure becomes more complicated in our case because each error term is of order $\frac{1}{\sqrt{N}}$, and the approach in \cite{SZZ21} would only provide a ``$1/\sqrt{N}$ expansion''. To obtain a $1/N$ expansion, we need to analyze the structure of each term obtained from IBP, develop more useful recursions in Section~\ref{sec:IBP} and classify the terms into two categories, and iterate IBP in a way that depends on the parity of occurrences of $\PPhi^2$. See Section~\ref{sec:5} for details.

\medskip

In the third part of the paper
%we focus on the fluctuations of the stationary dynamics.
we turn to questions about the dynamics.
The earlier work \cite{SSZZ2d} constructed stationary processes $(\Phi_i,Z_i)_{1\leq i\leq N}$ such that $\Phi_i$ and $Z_i$ are stationary solutions to \eqref{eq:21i} and the linear equations
\begin{align*}
	\sL Z_i=\sqrt 2\xi_i,
\end{align*}
respectively. It was shown that with decomposition $\Phi_i=Y_i+Z_i$ one has $\E\|Y_i\|_{H^1}^2\lesssim \frac1N$ (see \cite[Lemma 6.2]{SSZZ2d} and also Lemma \ref{th:m2} below). Since the stationary distribution of $Z_i$ is given by the Gaussian free field $\nu=\cN(0,(\mm-\Delta)^{-1})$, a corollary of the above result is the convergence of $k$-marginal distribution of $\nu^N$ to $\nu^{\otimes k}$.

Given the above result,
it is  natural to ask what is the ``next order correction'' to the limiting dynamic.
 In this paper we consider the large $N$ limit behavior of the stationary process $\sqrt N(\Phi_i-Z_i)$ and obtain the following result.

\bt\label{main:2}  Let $\mm$ be as in Theorem \ref{main:1}. For every $k\in \mN$, $\{(\sqrt N(\Phi_1-Z_1),\dots,\sqrt N(\Phi_k-Z_k))\}_N$ is tight in $$\Big(L^2_{\text{loc}}(\mR^+;\bC^\kappa)\cap L^2_{\text{loc}}(\mR^+;H^\delta)\cap C(\mR^+;H^{-2\kappa})\Big)^k,$$
for some small $\kappa>0, 0<\delta<1$.
Every tight limit $u_i$ is a stationary process and satisfies the following equation
\begin{align}
	\sL u_i=\cP_i
\end{align}
in the analytic weak sense, where $\{\cP_1,\dots,\cP_k\}$ is stationary process  with the time marginal distribution $\{X_1\cQ,\dots,X_k\cQ\}$. Here $X_i, i=1,\dots,k,$ and $\cQ$ are independent, $X_i=^dZ_i$ and $\cQ$ is the Gaussian field as in Theorem \ref{main:1}.
\et

Theorem \ref{main:2} follows from Theorem \ref{th:5.1}, Theorem \ref{th:3} and Theorem \ref{th:4}. The key step of the proof is to identify the joint distributions of $\cP_i$, which again follows from a combination of IBP and uniform estimates from stochastic quantization equations \eqref{eq:21i}. By applying IBP and utilizing the uniform estimates, we can analyze the correlations and dependencies among the $\cP_i$ variables. This allows us to identify the joint distributions of these variables.

Remark that it would be interesting to apply and extend our methodology to three spatial dimensions.
%can be applied to the study of large $N$ limits of  the coupled dynamical $\Phi^4_3$ field.
 The work \cite{SZZ3d} proved the convergence of the coupled $\Phi^4_3$ field to the Gaussian free field and the tightness of the first observable. However, the uniform estimates for the stochastic quantization equations in three dimensions as presented in \cite{SZZ3d}, are currently insufficient to obtain the statistical behavior as $N\to \infty$ (e.g. extend results of  the present paper to 3D). Further investigations are required in this direction and will be very interesting.	When $N=1$, remarkable results have been obtained  recently for stochastic quantization of $\Phi^4_3$, including the construction of local solutions \cite{Hairer14,GIP15,MR3846835}, global solutions and estimates \cite{MW18,GH18,AK17,MoinatCPAM, GH18a, JagannathPerkowski2021}, as well as a priori bounds in fractional dimension $d<4$ by \cite{CMW19,GR2023}. These new techniques could potentially be helpful in improving the a priori bounds as required in the study of large $N$ problems in three dimensions.

As more far-reaching goals, it would be interesting to study large $N$ problems beyond $\Phi^4$ type models, for either  invariant measures or observables or the associated stochastic dynamics.
For instance, the coupled KPZ systems \cite{MR3653951}, random loops in $N$ dimensional manifolds \cite{hairer2019geo,hairer2016motion, RWZZ17, CWZZ18} and
	Yang-Mills type models \cite{CCHS20,CCHS_3D,Shen2018Abelian,Chevyrev22YM} where the dimension of the Lie group or its representation space tends to infinity. 
In the last case,  the Yang--Mills measure in 2D is known to converge to a deterministic limit called the master field \cite{Levy2011,DN2020yang,DL2022I,DL2022II} which satisfies the Makeenko--Migdal equations; on lattice much more results can be proved, see \cite{MR3919447,chatterjee2016} and dynamic approach \cite{SSZloop,SZZ22}.

\subsection*{Structure of the paper}
This paper is structured as follows.
In Section \ref{s:uni}, we obtain uniform in $N$ estimates  for the stationary solutions to the equations \eqref{eq:21i}. These estimates are essential for subsequent analysis and provide the foundation for our results.
Section \ref{sec:IBP} is dedicated to deriving the recursive relation for the $k$-point functions of the observables through the integration by parts (IBP) technique. This recursive relation allows us to study the correlations and dependencies among the observables.
In Section \ref{sec:large}, we investigate the behavior of the observables in the large $N$ limit. Specifically, in Section \ref{sec:4.1}, we prove that the large $N$ limit of the first observable corresponds to a Gaussian field $\cQ$. In Section \ref{sec:4.3}, we establish that the general observables converge to suitably renormalized powers of $\cQ$ by solving the recursive equations obtained earlier.
Section \ref{sec:5} is dedicated to deriving the $1/N$ expansion for the $k$-point functions of the first observables. Finally, in Section \ref{sec:6}, we study the behavior of the fluctuations of the dynamics in the large $N$ limit and prove Theorem~\ref{main:2}.

\subsection*{Notations.} 
Set $\mN_0\eqdef \mN\cup \{0\}$.  Throughout the paper, we use the notation $a\lesssim b$ if there exists a constant $c>0$ such that $a\leq cb$, and we write $a\simeq b$ if $a\lesssim b$ and $b\lesssim a$. 

Let $\mathcal{S}'(\mT^d)$ and $\mathcal{S}(\mT^d)$ be the space of  distributions and its dual on $\mathbb{T}^d$ with $\<\cdot,\cdot\>$ as the usual duality between $\mathcal{S}'(\mT^d)$ and $\mathcal{S}(\mT^d)$. 
We denote by $B^\alpha_{p,q}$ the Besov spaces on the torus with general indices $\alpha\in \R$, $p,q\in[1,\infty]$.
 The H\"{o}lder-Besov space
$\bC^\alpha$ is given by $\bC^\alpha=B^\alpha_{\infty,\infty}$ and we will often write $\|\cdot\|_{\bC^\alpha}$ instead of $\|\cdot\|_{B^\alpha_{\infty,\infty}}$. We refer to Appendix \ref{A.1} for the definition of Besov spaces.
Set $\Lambda= (1-\Delta)^{\frac{1}{2}}$.
For $s\geq0$, $p\in [1,+\infty]$ we use $H^{s}_p$ to denote the subspace of $L^p$, consisting of all  $f$   which can be written in the form $f=\Lambda^{-s}g$ with $g\in L^p$, and the $H^{s}_p$ norm of $f$ is defined to be the $L^p$ norm of $g$, i.e. $\|f\|_{H^{s}_p}:=\|\Lambda^s f\|_{L^p}$. For $s<0$, $p\in (1,\infty)$, $H^s_p$ is the dual space of $H^{-s}_q$ with $\frac{1}{p}+\frac{1}{q}=1$.
 Set $H^s:=B^s_{2,2}$ for $s\in\mR$. We also use $B^{\alpha,\eps}_{p,q}, \alpha\in\mR, p,q\in [1,\infty]$ and $H^{s,\eps}, s\in\mR$ to denote discrete Besov sapces and discrete Sobolev spaces on $\Lambda_\eps\eqdef\eps \mZ^2\cap \mT^2$.  Their definitions are also given in Appendix \ref{A.1}. We also denote by $\widehat{f}$  the Fourier transform of $f$ on $\mT^2$.

Given a Banach space $E$ with a norm $\|\cdot\|_E$, for an interval
 $B\subseteq\mR$ we write $C(B;E)$ for the space of continuous functions from $B$ to $E$. Set $C_TE=C([0,T];E)$ for every $T>0$.
 %We also use $C(\mR;E)$ to denote the space of continuous functions from $\mR$ to $E$.
 For $p\in [1,\infty]$ we write $L^p([0,T];E)=L^p_TE$ for the space of $L^p$-integrable functions from $[0,T]$ to $E$, equipped with the usual $L^p$-norm. We also use $L^p_{\mathrm{loc}}(\mR;E)$ to denote the space of functions $f$ from $\mR$ to $E$ satisfying $f|_{[0,T]}\in L^p((0,T); E)$ for all $ T>0$. 
 We also denote $L^p=L^p(\mathbb{T}^2)$.

\subsection*{Acknowledgements} We would like to thank Scott Smith for many helpful discussions on large N problems.
H.S. gratefully acknowledges support by NSF via the grants DMS-1954091 and  CAREER DMS-2044415.  R.Z. and X.Z. are grateful to the financial supports by National Key R\&D Program of China (No. 2022YFA1006300). R.Z. gratefully acknowledges financial support from the NSFC (No.  12271030) and BIT Science and Technology Innovation Program Project 2022CX01001. X.Z. is grateful to the financial supports in part by National Key R\&D Program of China
	(No. 2020YFA0712700) and the NSFC (No. 12090014, 12288201) and the support by key Lab of Random Complex
	Structures and Data Science, Youth Innovation Promotion Association (2020003), Chinese Academy of Science.
	R.Z. and X.Z. are grateful to
	the financial supports  by the Deutsche Forschungsgemeinschaft (DFG, German Research Foundation) – Project-ID 317210226--SFB 1283.

 \section{Uniform estimates from SPDEs}\label{s:uni}
In this section, we will revisit uniform estimates in  \cite{SSZZ2d} and also establish new and stronger estimates for the stationary solutions of the following system of SPDEs
\begin{equation}\label{eq:Phi2d}
	\LL \Phi_i=-\frac{1}{N}\sum_{j=1}^N \Wick{\Phi_j^2\Phi_i}+\sqrt 2\xi_i.
\end{equation}
%where $\LL=\p_t-\Delta+\mm$ with $\mm\geq0$, $N\in \N$, and $i\in\{1,\cdots,N\}$.
%The collection $(\xi_i)_{i=1}^N$ consists of $N $ independent space-time white noises on a stochastic basis, i.e. $(\Omega,\mathcal{F},\mathbf{P})$ with a filtration.
These estimates are essential for the analysis in the subsequent sections.
%subsequent analysis and investigations, such as the study of correlations, large $N$ limit of the observables, and fluctuations of the dynamic.

Let $Z_{i}$ be the stationary solution to the linear SPDE
\begin{equation}\label{eq:li1}
	\LL Z_i=\sqrt 2\xi_i,
\end{equation}
and let  $Y_{i}$ be the stationary solution to the  equation
\begin{equ}\label{eq:22}
	\LL Y_i =-\frac{1}{N}\sum_{j=1}^N(Y_j^2Y_i+Y_j^2Z_i+2Y_jY_iZ_j+2Y_j\Wick{Z_iZ_j}
	+ \Wick{Z_j^2}Y_i+\Wick{Z_i Z_j^2 }).
\end{equ}
The notations $\Wick{Z_iZ_j} $, $ \Wick{Z_j^2}$ and $\Wick{Z_i Z_j^2}$ denote renormalized products of Wick type which are defined as follows:
let $Z_{i,\varepsilon}$ be a space-time mollification of $Z_i$ and define
\begin{equ}[e:wick-tilde]
	\Wick{Z_iZ_j}=
	\begin{cases}
		\lim\limits_{\varepsilon\to0}(Z_{i,\varepsilon}^2-a_\varepsilon)  &  (i=j)\\
		\lim\limits_{\varepsilon\to0}Z_{i,\varepsilon}Z_{j,\varepsilon} & (i\neq j)
	\end{cases}
	\quad
	\Wick{Z_iZ_j^2} =
	\begin{cases}
		\lim\limits_{\varepsilon\to0}(Z_{i,\varepsilon}^3-3a_\varepsilon Z_{i,\varepsilon})   & (i=j)\\
		\lim\limits_{\varepsilon\to0}(Z_{i,\varepsilon}Z_{j,\varepsilon}^2-a_\varepsilon Z_{i,\varepsilon}) & (i\neq j)
	\end{cases}
\end{equ}
where $a_\varepsilon=\mathbf{E}[ Z_{i,\varepsilon}^2]$ is a diverging constant independent of $i$ and the limits are understood in $C_T\bC^{-\kappa}$ $\bP$-a.s. for $\kappa>0$. For fixed $N\in\mN$, using \cite[Lemma 5.7]{SSZZ2d} we have constructed a stationary process $(\Phi_i^N,Z_i)_{1\leq i\leq N}$ such that the components $\Phi^N_i$ and $Z_i$ are stationary solutions to \eqref{eq:Phi2d} and \eqref{eq:li1}, respectively.  In the following we omit the superscript $N$ for simplicity. We know that $Y_i=\Phi_i-Z_i$ is a stationary solution to \eqref{eq:22}.  We recall the following uniform in $N$ bounds for the stationary solutions $Y_i$ to equation \eqref{eq:22} from \cite[Lemma 6.2]{SSZZ2d}.

\bl\label{th:m2} There exists an $m_{0}$ such that for $\mm \geq m_{0}$ and $q\geq 1$
\begin{align}
	&\E \bigg [ \bigg (\sum_{i=1}^N\|Y_i\|_{L^2}^2 \bigg)^q \bigg ]+\E \bigg[\bigg(\sum_{i=1}^N\|Y_i\|_{L^2}^2+1\bigg)^{q}\bigg(\sum_{i=1}^N\|\nabla Y_i\|_{L^2}^2\bigg) \bigg] \lesssim 1,\label{se19} \\
	&\E \bigg[\bigg(\sum_{i=1}^N\|Y_i\|_{L^2}^2+1\bigg)^{q}\bigg\|\sum_{i=1}^NY_i^{2} \bigg\|_{L^2}^2\bigg]\lesssim 1,\label{se16}
\end{align}
where the implicit constants are independent of $N$.
\el

%In the next section we will apply integration by parts formula to study the large $N$ limit of the  following  observables:
Let $\nu^N$ be the unique invariant measure of \eqref{eq:Phi2d}.
% namely,
%$$
%	\nu^N(\Phi)\eqdef \frac{1}{C_N'}
%	\exp\bigg(-\frac12\int_{\mathbb T^2}\frac{1}{4N}
%	\Wick{\Big(\sum_{j=1}^N\Phi_j^2\Big)^2}
%	 \dif x\bigg) \nu^{\otimes N}
%$$
%where $\nu=\cN(0,\frac12(\mm-\Delta)^{-1})$.
Recall that our main objects of interest are the observables
\begin{equ}\label{ob}
%	\frac{1}{N^{1/2}}\sum_{i=1}^N \Wick{\Phi_i^2}\;,
%	\qquad
%	\frac{1}{N} \Wick{\Big( \sum_{i=1}^N \Phi_i^2 \Big)^2} \;,	\qquad
	\frac{1}{N^{n/2}} \Wick{\Big( \sum_{i=1}^N \Phi_i^2 \Big)^{n}}
\end{equ}
with $n\geq1$, and $\Phi=(\Phi_i)_{1\leq i\leq N}\thicksim \nu^N$.
%for the invariant measure $\nu^N$ of  \eqref{eq:Phi2d}.
These observables are defined precisely using
 the stationary solutions $Z_i$ and $Y_i$ to \eqref{eq:li1} and \eqref{eq:22}, for instance,
 \begin{equs}
 	\frac{1}{\sqrt N} \Wick{ \sum_{i=1}^N \Phi_i^2 }
 	& \eqdef
 	\frac{1}{\sqrt N}\sum_{i=1}^N \Big(Y_i^2+2Y_iZ_i+\Wick{Z_i^2}\Big),\label{ob0}
\\
	\frac{1}{N} \Wick{\Big( \sum_{i=1}^N \Phi_i^2 \Big)^2}
	& \eqdef
	\frac{1}{N}\sum_{i,j=1}^N \Big(Y_i^2Y_j^2+4Y_i^2Y_jZ_j+2Y_i^2\Wick{Z_j^2} \label{ob1}
	\\&
	\qquad+\Wick{Z_i^2Z_j^2}+4Y_i\Wick{Z_iZ_j^2}+4Y_iY_j\Wick{Z_iZ_j}\Big),\label{ob2}
\end{equs}
and more generally,
\begin{equs}[obn]
	\frac{1}{N^{n/2}} \Wick{\Big( \sum_{i=1}^N \Phi_i^2 \Big)^n}
	& \eqdef
	\frac{1}{N^{n/2}}\sum_{k=0}^n\sum_{m=0}^{n-k}\frac{n!}{k!m!(n-k-m)!} \Big(\sum_{i=1}^NY_i^2\Big)^k  \\
	&\qquad\qquad\qquad\qquad
	\Wick{\Big(2\sum_{j=1}^NY_jZ_j\Big)^m\Big(\sum_{\ell=1}^NZ_\ell^2\Big)^{n-k-m}}\;.
\end{equs}
Here the Wick products are canonically defined as in \eqref{e:wick-tilde} with $a_\varepsilon=\E[Z_{i,\varepsilon}^2]$, in particular
\begin{equ}[e:Zi2Zj2]
	\Wick{Z_i^2Z_j^2} =
	\begin{cases}
		\lim\limits_{\varepsilon\to0}(Z_{i,\varepsilon}^4-6a_\varepsilon Z_{i,\varepsilon}^2+3a_\varepsilon^2)   & (i=j)\\
		\lim\limits_{\varepsilon\to0}(Z_{i,\varepsilon}^2-a_\varepsilon)(Z_{j,\varepsilon}^2-a_\varepsilon) & (i\neq j),
	\end{cases}
\end{equ}
where the limits are understood in $C_T\bC^{-\kappa}$ $\bP$-a.s. for $\kappa>0$. 
The Wick product of $(2\sum_{j=1}^NY_jZ_j)^m$ and
$\big(\sum_{\ell=1}^NZ_\ell^2\big)^l$ means the the canonical Wick products for $Z_j, Z_\ell$ and the usual product with $Y_j$.
We also define the Wick product inductively by
\begin{equ}[e:Zik]
	\Wick{Z_{i}^n}\eqdef\lim_{\eps\to0}\Wick{Z_{i,\eps}^n} \eqdef\lim_{\eps\to0}a_\eps^{n/2}H_n(a_\eps^{-1/2}Z_{i,\eps}),
\end{equ}
and
%\begin{equ}[e:ZiZ2n]
%	\Wick{Z_{i}^{n}Z_j^m} =
%	\begin{cases}
%		\lim\limits_{\eps\to0}\Wick{Z_{i,\eps}^{n+m}}  & (i=j)\\
%		\lim\limits_{\varepsilon\to0}(\Wick{Z_{i,\varepsilon}^n}\Wick{Z_{j,\varepsilon}^m}) & (i\neq j),
%	\end{cases}
%\end{equ}
%and \hao{Maybe later we can delete \eqref{e:ZiZ2n}? It's a special case of below} \zhu{OK.}
\begin{equ}[e:ZiZn]
	\Wick{\prod_{k=1}^pZ_{i_k}^{n_k}Z_j^m} =
	\begin{cases}
	\lim\limits_{\eps\to0}
	(\Wick{\prod_{k=1,k\neq \ell}^pZ_{i_k,\eps}^{n_k}}\Wick{Z_{i_\ell,\eps}^{n_\ell+m}} )  & (i_\ell=j, \mbox{ for some } \ell ),\\
		\lim\limits_{\varepsilon\to0}(\Wick{\prod_{k=1}^pZ_{i_k,\eps}^{n_k}}\Wick{Z_{j,\eps}^{m}}) & (i_k\neq j, \forall k=1,\dots p),
	\end{cases}
\end{equ}
where $n,m, p, n_k\in\mN$, $i_k,j\in\{1,\dots,N\}$, $i_1,\dots,i_k$ are all distinct and $H_n$ with $n\geq1$ are Hermite polynomials explicitly given by
\begin{align}\label{def:Her}
	H_n(x)=\sum_{j=0}^{\lfloor n/2\rfloor}(-1)^j\frac{n!}{(n-2j)!j!2^j}x^{n-2j}.
\end{align}

Although we use the stationary solutions $Y_i$ and $Z_i$ to define  observables in \eqref{ob}, the laws of them only depend on $\nu^N$ (see \cite[Remark 6.1]{SSZZ2d}).

We also recall the following classical results for the above Wick products.

\bl\label{lem:z} For $\ell\geq1, \kappa>0$ it holds that
\begin{align*}
	\E\Big\|\Wick{\prod_{k=1}^pZ_{i_k}^{n_k}}\Big\|_{\bC^{-\kappa}}^\ell
	\lesssim 1,
\end{align*}
with $p, n_k\in\mN$, $i_k\in\{1,\dots,N\}$ and the proportional constant is independent of %different indices.
the indices $i_k$.
\el

As we will see, by repeatedly applying  integration by parts
to the above observables, we will obtain terms which are the main contribution to the large $N$ limits, as well as ``remainder'' or ``error'' terms.
All the remainder terms for the observable $\frac{1}{N^{1/2}}\sum_{i=1}^N \Wick{\Phi_i^2}$  from IBP will be controlled by  terms of the following form:
\begin{align*}
	\E A_1^{\ell_1}A_2^{\ell_2}A_3^{\ell_3},
\end{align*}
with $\ell_i\geq0,$ where  (for $s>3\kappa>0$ small enough)
\begin{align*}
	A_1 &\eqdef \|Y_1\|_{L^2}+\|Z_1\|_{\bC^{-\frac\kappa 3}},\\
	A_2 &\eqdef  \bigg\|\sum_{i=1}^NY_i^2\bigg \|_{L^{1}} +\sum_{i=1}^N\|Y_i\|_{H^{\kappa}}\|Z_i\|_{\bC^{-\frac\kappa2}}+\bigg\|\sum_{i=1}^N\Wick{Z_i^2} \bigg \|_{H^{-\frac\kappa2}},\\
	A_3 &\eqdef \bigg\|Y_1\sum_{i=1}^NY_i^2 \bigg \|_{L^{1+s}}+\bigg \|\Lambda^{-s}(Z_1\sum_{i=1}^NY_i^2) \bigg \|_{L^{1+s}}
	\\&\quad+\bigg \|\Lambda^{-s}(Y_1\sum_{i=1}^NY_iZ_i) \bigg \|_{L^{1+s}}+\bigg \|\Lambda^{-s}(\sum_{i=1}^NY_i\Wick{Z_1Z_i}) \bigg \|_{L^{1+s}}
	\\&\quad+\bigg \|\Lambda^{-s}(Y_1\sum_{i=1}^N\Wick{Z_i^2}) \bigg \|_{L^{1+s}}+\bigg \|\sum_{i=1}^N\Wick{Z_1Z_i^2} \bigg \|_{H^{-s}}
	.
\end{align*}

For $A_1, A_2$ and $A_3$ we recall the following moment estimates from \cite[Lemma 6.6]{SSZZ2d}.

\bl\label{lem:zmm} Let $\mm$  as in Lemma \ref{th:m2}. For each $\ell_i\geq0$, $\kappa, s>0$ with $\ell_2\frac\kappa2+3\ell_3 s<1$, it holds that
\begin{align*}
	\E A_1^{\ell_1} A_2^{\ell_2} A_3^{\ell_3} \lesssim N^{(\ell_2+\ell_3)/2}
\end{align*}
where the proportional constant is independent of $N$.
\el

Lemma \ref{lem:zmm} is sufficient to control the remainders  from IBP to determine the large $N$ limit of  the first observables $\frac1{N^{1/2}}\sum_{i=1}^N\Wick{\Phi_i^2}$. To determine the large $N$ limit of {\it all} the observables in \eqref{ob}, we derive a uniform in $N$  higher moments estimate for the $\bC^\kappa$-norm of $Y_i$.
Below we write $S_t=e^{t(\Delta-\mm)}$.

\bl\label{lem:zin}
Let $\mm$  as in Lemma \ref{th:m2}.
%For $\m$ sufficiently large and
 For $\ell\geq1, \kappa>0$ with $12\ell \kappa<1$ it holds that
\begin{align*}
	\E\|Y_i\|_{L^\infty}^\ell\lesssim\E\|Y_i\|_{\bC^\kappa}^\ell \lesssim \frac1{N^{\ell/2}},
\end{align*}
and for $24\ell\kappa<1$
\begin{align*}
	\E\Big(\sum_{i=1}^N\|Y_i\|_{\bC^\kappa}^2\Big)^\ell\lesssim 1,
\end{align*}
where the proportional constants may depend on $\ell$ but  are independent of $N$.
\el
\begin{proof}
	Write  \eqref{eq:22} into the mild form:
	\begin{align*}
		Y_i(t)=S_tY_i(0)-\frac1N\int_0^tS_{t-r}\Big(\sum_{j=1}^N(Y_j^2Y_i+Y_j^2Z_i+2Y_jY_iZ_j+2Y_j\Wick{Z_iZ_j}
		+ \Wick{Z_j^2}Y_i+\Wick{Z_i Z_j^2 })\Big)\dif r.
	\end{align*}
	For the first term involving initial data, we use  smooth effect of the heat operator to have for $\kappa>0$ small enough
	\begin{align*}
		\|S_tY_i(0)\|_{\bC^\kappa}\leq e^{-\mm t}\|Y_i(0)\|_{\bC^\kappa}.
	\end{align*}
We then
 apply  Besov embedding Lemma \ref{lem:emb} to have  $B^{-s}_{1+s,\infty}\subset\bC^{2\kappa-2}$ for $s>s^2+2\kappa(1+s)$ and Lemma \ref{lem:sch} to  find  that for $s>0$ small enough and $\delta>0$
	\begin{align*}
		\|Y_i\|_{L_T^p\bC^\kappa}^p
		&\leq \frac1{\mm p}(1-e^{-\mm Tp})(1+\delta)\|Y_i(0)\|_{\bC^\kappa}^p
		\\&\quad + \frac{C_{\delta,p}}{N^p}\bigg[\bigg\|Y_i\sum_{j=1}^NY_j^2 \bigg \|_{L_T^pL^{1+s}}^p+\bigg \|\Lambda^{-s}(Z_i\sum_{j=1}^NY_j^2) \bigg \|_{L_T^pL^{1+s}}^p
		\\&\quad+\bigg \|\Lambda^{-s}(Y_i\sum_{j=1}^NY_jZ_j) \bigg \|_{L_T^pL^{1+s}}^p+\bigg \|\Lambda^{-s}(\sum_{j=1}^NY_j\Wick{Z_iZ_j}) \bigg \|_{L_T^pL^{1+s}}^p
		\\&\quad+\bigg \|\Lambda^{-s}(Y_i\sum_{j=1}^N\Wick{Z_j^2}) \bigg \|_{L_T^pL^{1+s}}^p+\bigg \|\sum_{j=1}^N\Wick{Z_iZ_j^2} \bigg \|_{L_T^pH^{-s}}^p\bigg]
	\end{align*}
where we used the elementary inequality $|A+B|^p\leq (1+\delta)A^p+C_{\delta,p}B^p$.
For fixed $T\geq2$  we can choose $p$ large and use the stationarity of $Y_i$ to deduce that the LHS can absorb the first term on the RHS.

Taking expectation on both sides, we note that the quantity in the bracket  has the same expectation as $A_3^pT$. Hence, by Lemma \ref{lem:zmm} we obtain for $\ell\geq1$ with $3\ell s<1$ and $s=4\kappa$
\begin{align*}
	\E\|Y_i\|_{L^\infty}^\ell\lesssim \E\|Y_i\|_{\bC^\kappa}^\ell\lesssim\frac1{N^{\ell}}\E A_3^\ell\lesssim \frac1{N^{\ell/2}},
\end{align*}
which implies the first bound.
The second bound follows by the first bound together with H\"older's inequality applied to the sum over $i$.
\end{proof}

%Using Lemma \ref{lem:zin} we derive the following improved bounds compared to Lemma \ref{th:m2}.
%
%\bl\label{lem:zl2} Let $m$ as in Lemma \ref{th:m2}. It holds that for $\ell\geq1$
%\begin{align*}
%	\E\bigg \| \sum_{i=1}^{N}Y_{i}^{2} \bigg \|_{L^{2}}^{\ell}\lesssim1.
%\end{align*}
%\el
%\begin{proof}
%	It is easy to find that
%	\begin{align*}
%		\bigg \| \sum_{i=1}^{N}Y_{i}^{2} \bigg \|_{L^{2}}^2\lesssim \Big(\sum_{i=1}^{N}\| Y_{i} \|_{L^{2}}^2\Big)\Big(\sum_{j=1}^{N}\|Y_{j}\|_{L^\infty}^{2} \Big).
%	\end{align*}
%Hence, the result holds by taking expectation and applying Holder's inequality and Lemma \ref{th:m2},  Lemma \ref{lem:zin}.
%	\end{proof}

Using Lemma \ref{lem:zin} we can improve the bounds in Lemma \ref{lem:zmm}. In particular we have the following results. We introduce the following short hand notation
\begin{align*}
	\Wick{\PPhi^2} & \eqdef \sum_{i=1}^N\Wick{\Phi_i^2}\;,
	\qquad\qquad
	 \Wick{\Phi_1\PPhi^2}\eqdef \sum_{i=1}^N\Wick{\Phi_1\Phi_i^2}\;,
	 \\
	 \Wick{(\PPhi^2)^n} &\eqdef \Wick{\Big(\sum_{i=1}^N\Phi_i^2\Big)^n}\;,
	 \qquad \Wick{\Phi_1(\PPhi^2)^n}\eqdef \Wick{\Phi_1\Big(\sum_{i=1}^N\Phi_i^2\Big)^n}
\end{align*}
where $\sum_{i=1}^N\Wick{\Phi_1\Phi_i^2}$ is defined as the sum in the brackets of \eqref{eq:22} with $i=1$. Here $\Wick{\Phi_1(\sum_{i=1}^N\Phi_i^2)^n}$ is defined similarly as in \eqref{obn} with $\Phi_1, \Phi_i$ replaced by $(Y_1+Z_1)$ and $Y_i+Z_i$, respectively, where the products between different $Z_i, Z_j$ are the Wick products defined in \eqref{e:Zik}-\eqref{e:ZiZn}.

\bp\label{prop1}
Let $\mm$ be as in Lemma \ref{th:m2}.
%Let $\m$ be sufficiently large.
For $\ell_i\geq0$, $n_i\in\mN$, $i=1,\dots,n$,  $n\in\mN$
and $\kappa>0$, one has 
\begin{equs}
	\E\Big(\prod_{i=1}^n \Big\|\Wick{(\PPhi^2)^{n_i}}\Big\|_{H^{-\kappa}}^{\ell_i}\Big) & \lesssim N^{\frac12\sum_{i=1}^nn_i\ell_i},
\\
	\E\Big(\prod_{i=1}^n \Big\|\Wick{\Phi_1(\PPhi^2)^{n_i}}\Big\|_{H^{-\kappa}}^{\ell_i}\Big) &\lesssim N^{\frac12\sum_{i=1}^nn_i\ell_i},
\end{equs}
where the proportional constants are independent of $N$.
\ep

\begin{proof}
It suffices to prove that for $n\in\mN$, $\ell\geq0$ with $48\ell n\kappa <1$
\begin{align}\label{est:phi2}
	\E\Big( \Big\|\Wick{(\PPhi^2)^n}\Big\|_{H^{-\kappa}}^{\ell}\Big)\lesssim N^{n\ell/2},
\end{align}	
and for $\ell\geq0$ with $96n\ell\kappa<1$
\begin{align}\label{est:phi3}
	\E\Big( \Big\|\Wick{\Phi_1(\PPhi^2)^n}\Big\|_{H^{-\kappa}}^{\ell}\Big)\lesssim N^{n\ell/2}.
\end{align}	
The general results stated in the proposition then follow from H\"older's inequality. For general $\kappa>0$ and $\ell_i$ we can always find $0<\kappa'<\kappa$ such that $\sum_{i}96\ell_i\kappa' n_i<1$. Since $\|f\|_{H^{-\kappa}}\leq \|f\|_{H^{-\kappa'}}$ the  results hold with general $\kappa$.

By definition of $\Wick{(\PPhi^2)^n}$ we want to estimate the following general terms for $0\leq k\leq n, 0\leq m\leq n-k$
\begin{align*}
	&\Big\|\Big(\sum_{i=1}^NY_i^2\Big)^k\Wick{\Big(\sum_{j=1}^NY_jZ_j\Big)^m\Big(\sum_{\ell=1}^NZ_\ell^2\Big)^{n-k-m}}\Big\|_{H^{-\kappa}}
	\\ &\lesssim 	\Big\|\Big(\sum_{i=1}^NY_i^2\Big)^k\Big\|_{\bC^{2\kappa}}
	\;\Big\|\Wick{\Big(\sum_{j=1}^NY_jZ_j\Big)^m\Big(\sum_{\ell=1}^NZ_\ell^2\Big)^{n-k-m}}\Big\|_{H^{-\kappa}}.
\end{align*}
For the first factor on the RHS, we use  Lemma \ref{lem:zin} and Lemma \ref{lem:multi} to have for $\ell>1$ with $48\kappa\ell k<1$
\begin{align}\label{est:yn}
	\E\Big\|\Big(\sum_{i=1}^NY_i^2\Big)^k\Big\|_{\bC^{2\kappa}}^\ell\lesssim \E\Big\|\sum_{i=1}^NY_i^2\Big\|_{\bC^{2\kappa}}^{\ell k}\lesssim 1.
\end{align}
Turning to the other factor,
for the case $m=0$ we also have
\begin{align*}
	\E\Big\|\Wick{\Big(\sum_{\ell=1}^NZ_\ell^2\Big)^{n-k}}\Big\|_{H^{-\kappa}}^2 & =\E\Big\<\Lambda^{-\kappa}\Wick{\Big(\sum_{\ell=1}^NZ_\ell^2\Big)^{n-k}}\; ,\;\Lambda^{-\kappa}\Wick{\Big(\sum_{\ell=1}^NZ_\ell^2\Big)^{n-k}}\Big\>
	\\
	&=\sum_{\substack{\ell_i=1 \\ i=1,\dots,n-k}}^N\sum_{\substack{p_j=1 \\ j=1,\dots,n-k}}^N\E\Big\<\Lambda^{-\kappa}\Wick{\prod_{i=1}^{n-k}Z_{\ell_i}^2},\Lambda^{-\kappa}\Wick{\prod_{j=1}^{n-k}Z_{p_j}^2}\Big\>.
\end{align*}
Now we have $2(n-k)$ indices summing from $1$ to $N$ and these indices $\ell_i$ and $\ell_j$ might be different. If $\{\ell_1,\dots,\ell_{n-k},p_1,\dots,p_{n-k}\}$ involves more than $n-k+1$ different elements from $\{1,\dots,N\}$, we have
\begin{align*}
	\E\Big\<\Lambda^{-\kappa}\Wick{\prod_{i=1}^{n-k}Z_{\ell_i}^2},\Lambda^{-\kappa}\Wick{\prod_{j=1}^{n-k}Z_{p_j}^2}\Big\>=0.
\end{align*}
Hence, we  have at most  $n-k$ distinct indices summing from $1$ to $N$, and thus by Lemma \ref{lem:z} we have
	\begin{align}\label{est:zn}
		\E\Big\|\Wick{\Big(\sum_{\ell=1}^NZ_\ell^2\Big)^{n-k}}\Big\|_{H^{-\kappa}}^2\lesssim N^{n-k}.
\end{align}
Using Gaussian hypercontractivity and the fact that $\Wick{\Big(\sum_{\ell=1}^NZ_\ell^2\Big)^{n-k}}$ is a random variable with finite Wiener chaos decomposition, we have for $\ell\geq1$
	\begin{align*}
	\E\Big\|\Wick{\Big(\sum_{\ell=1}^NZ_\ell^2\Big)^{n-k}}\Big\|_{H^{-\kappa}}^{\ell}\lesssim N^{(n-k)\ell/2}.
\end{align*}
For general $m>0$ we have
\begin{align*}
	\Wick{\Big(\sum_{j=1}^NY_jZ_j\Big)^m\Big(\sum_{\ell=1}^NZ_\ell^2\Big)^{n-k-m}}
	=\sum_{\substack{j_i=1 \\ i=1,\dots,m}}^N\prod_{i=1}^m Y_{j_i}\Wick{\prod_{i=1}^mZ_{j_i}\Big(\sum_{\ell=1}^NZ_\ell^2\Big)^{n-k-m}}.
\end{align*}
Using Lemma \ref{lem:multi} we find that
 the $H^{-\kappa}$-norm of the above term is bounded by
\begin{align*}
	\sum_{\substack{j_i=1 \\ i=1,\dots,m}}^N\prod_{i=1}^m\|Y_{j_i}\|_{\bC^{2\kappa}}\Big\|\Wick{\prod_{i=1}^mZ_{j_i}\Big(\sum_{\ell=1}^NZ_\ell^2\Big)^{n-k-m}}\Big\|_{H^{-\kappa}}.
\end{align*}
Using H\"older's inequality w.r.t. the sum over $j_i$ we get
\begin{align*}
	\Big(\sum_{j=1}^N\|Y_{j}\|_{\bC^{2\kappa}}^2\Big)^{m/2}\Big(\sum_{\substack{j_i=1\\i=1,\dots,m}}^N\Big\|\Wick{\prod_{i=1}^mZ_{j_i}\Big(\sum_{\ell=1}^NZ_\ell^2\Big)^{n-k-m}}\Big\|_{H^{-\kappa}}^2\Big)^{1/2}.
\end{align*}
Using Lemma \ref{lem:zin} we find for $\ell\geq1$ with $24\kappa\ell m<1$
\begin{align}\label{est:yn1}
	\E\Big(\sum_{j=1}^N\|Y_{j}\|_{\bC^{2\kappa}}^2\Big)^{m\ell/2}\lesssim1.
\end{align}
We then consider the second term:
\begin{align*}
	&\sum_{\substack{j_i=1 \\ i=1,\dots,m}}^N\Big\|\Wick{\prod_{i=1}^mZ_{j_i}\Big(\sum_{\ell=1}^NZ_\ell^2\Big)^{n-k-m}}\Big\|_{H^{-\kappa}}^2
	\\&=\sum_{\stackrel{\ell_p,m_q=1}{p,q=1,\dots,n-k-m}}^N\sum_{\stackrel{j_i=1}{i=1,\dots,m}}^N\Big\<\Lambda^{-\kappa}\Wick{\prod_{i=1}^mZ_{j_i}\prod_{p=1}^{n-k-m}Z_{\ell_p}^2} \;,\; \Lambda^{-\kappa}\Wick{\prod_{i=1}^mZ_{j_i}\prod_{q=1}^{n-k-m}Z_{m_q}^2}\Big\>.
\end{align*}
We have $2n-2k-m$ indices being summed from $1$ to $N$.   For fixed $j_1,\dots,j_m$, if the set
$$\{\ell_1,\dots,\ell_{n-k-m},m_1,\dots,m_{n-k-m}\}$$ does not include $j_i$ ($i=1,\dots,m$) but involves more than $n-k-m+1$ different indices, we find
\begin{align*}
	\E\Big\<\Lambda^{-\kappa}\Wick{\prod_{i=1}^mZ_{j_i}\prod_{p=1}^{n-k-m}Z_{\ell_p}^2},\Lambda^{-\kappa}\Wick{\prod_{i=1}^mZ_{j_i}\prod_{q=1}^{n-k-m}Z_{m_q}^2}\Big\>=0.
\end{align*}
Hence, we have to take sum for $n-k$ different indices, which combined with Lemma \ref{lem:z} implies
\begin{align*}
	\sum_{j_i=1,i=1,\dots,m}^N\E\Big\|\Wick{\prod_{i=1}^mZ_{j_i}\Big(\sum_{\ell=1}^NZ_\ell^2\Big)^{n-k-m}}\Big\|_{H^{-\kappa}}^2\lesssim N^{n-k}.
\end{align*}
Using Gaussian hypercontractivity we have for $\ell\geq1$
\begin{align}\label{est:zn2}
	\E\Big(\sum_{j_i=1,i=1,\dots,m}^N\Big\|\Wick{\prod_{i=1}^mZ_{j_i}\Big(\sum_{\ell=1}^NZ_\ell^2\Big)^{n-k-m}}\Big\|_{H^{-\kappa}}^2\Big)^{\ell/2}\lesssim N^{(n-k)\ell/2}.
\end{align}
Combining \eqref{est:yn}-\eqref{est:zn2} and applying H\"older's inequality, \eqref{est:phi2} follows, which implies the first bound in the proposition.

In the following we prove \eqref{est:phi3}.  Since $\Phi_1=Y_1+Z_1$ we can write
\begin{align*}
	\Wick{\Phi_1(\PPhi^2)^n}=Y_1\Wick{(\PPhi^2)^n}+\Wick{Z_1(\PPhi^2)^n}.
\end{align*}	
For the first term on the RHS we use Lemma \ref{lem:zin} and \eqref{est:phi2} to have for $96n\ell \kappa<1$
\begin{align*}
	\E\|Y_1\Wick{(\PPhi^2)^n}\|_{H^{-\kappa}}^\ell
	&\lesssim\E \|Y_1\|_{\bC^{2\kappa}}^\ell\|\Wick{(\PPhi^2)^n}\|_{H^{-\kappa}}^\ell
	\\
	&\lesssim\Big(\E \|Y_1\|_{\bC^{2\kappa}}^{2\ell}\Big)^{1/2}\Big(\E\|\Wick{(\PPhi^2)^n}\|_{H^{-\kappa}}^{2\ell}\Big)^{1/2}\lesssim N^{\frac{n\ell}2}.
\end{align*}
For the second term we use the defintion of
$\Wick{(\PPhi^2)^n}$  to estimate the following general terms for $0\leq k\leq n, 0\leq m\leq n-k$
\begin{align*}
	&\Big\|\Big(\sum_{i=1}^NY_i^2\Big)^k\Wick{Z_1\Big(\sum_{j=1}^NY_jZ_j\Big)^m\Big(\sum_{\ell=1}^NZ_\ell^2\Big)^{n-k-m}}\Big\|_{H^{-\kappa}}
	\\
	&\lesssim 	\Big\|\Big(\sum_{i=1}^NY_i^2\Big)^k\Big\|_{\bC^{2\kappa}}\Big\|\Wick{Z_1\Big(\sum_{j=1}^NY_jZ_j\Big)^m\Big(\sum_{\ell=1}^NZ_\ell^2\Big)^{n-k-m}}\Big\|_{H^{-\kappa}}.
\end{align*}
Compared to the estimates for the corresponding terms  to derive \eqref{est:phi2}, we have extra $Z_1$ in each Wick product involving $Z$ part. All the estimates follow in the exactly the same way.
Hence, \eqref{est:phi3} follows
which implies the second bound
in the proposition.
	\end{proof}

The moments estimates obtained in Proposition \ref{prop1} are important in bounding various error terms in the following sections.
The proof given above has a flavor of mean field limit technique; in particular, the use of Hilbert spaces $H^{-\kappa}$ is crucial (see \cite[Sec.~1 and Sec.~4]{SSZZ2d} for more motivation).
We will also use the following shorthand notation for the norms appeared in  Proposition \ref{prop1}: for $i\in\mN$
\begin{equ}\label{no:B}
	B_{2i}\eqdef \|\Wick{(\PPhi^2)^i}\|_{H^{-\kappa}},\qquad B_{2i-1}\eqdef \|\Wick{\Phi_1(\PPhi^2)^{i-1}}\|_{H^{-\kappa}}.
\end{equ}
Here the definition of $B_\cdot$ depends on $\kappa$ and we could apply Proposition \ref{prop1}.

\section{Integration by parts and recursive formula}
\label{sec:IBP}

In this section we will employ
 Dyson--Schwinger equations, i.e.
integration by parts formula (``IBP'' for short in the sequel) to derive the recursive equations of the $k$-point functions for the observables introduced in \eqref{ob}. The Dyson-Schwinger equations provide a powerful tool to relate higher-order correlation functions to lower-order ones. By applying IBP to the $k$-point functions of the observables, we can derive the recursive equations governing the behavior of the $k$-point functions.
These recursive equations allow us to systematically analyze the large $N$ limit of the observables in Section \ref{sec:large}.
To this end, we first recall the Dyson--Schwinger equations for the $O(N)$ model on the lattice $\Lambda_\eps\eqdef\eps \mZ^2\cap \mT^2$ %for $\eps=2^{-N}, N\in \mN\cup\{0\}$.
where $\eps=2^{-M}$ with $M\in \mN$.

We start with Gibbs measures $(\nu_{\eps})_{\eps}$ on $\Lambda_{\eps}$ given by
\begin{align}\label{eq:Phiin}
	\dif \nu_{\eps}^N\varpropto\exp\Big\{-\eps^2\sum_{\Lambda_{\eps}}\Big[\frac{1}{4N}\Big(\sum_{i=1}^N\Phi_i^2\Big)^2
	+\frac12\Big(-\frac{N+2}N a^{\eps}+\m\Big)\sum_{i=1}^N\Phi_i^2+\frac12\sum_{i=1}^N|\nabla_\eps\Phi_i|^2\Big]\Big\}\prod_{x\in \Lambda_{\eps}}\dif \Phi(x),\end{align}
where $$\nabla_\eps f(x)=\Big(\frac{f(x+\eps e_i)-f(x)}{\eps}\Big)_{i=1,2} $$ denotes the discrete gradient and $a^{\eps}$ are renormalization constants defined below. Here $(e_i)_{i=1,2}$ is the canonical basis in $\mR^2$. We write
$$\Delta_\eps f(x)=\eps^{-2}(f(x+\eps e_i)+f(x-\eps e_i)-2f(x)), \quad  x\in \Lambda_\eps$$
as the discrete Laplacian on $\Lambda_{\eps}$ and
$\sL_\eps:=\p_t+\m-\Delta_\eps$.
We also use
$\Phi_{\eps}$ to denote the random  distribution on $\Lambda_\eps$ with $\Phi_\eps=(\Phi_{i,\eps})_{i=1}^N=^d\nu_\eps^N$.

Let   $C_{\eps}(x) \eqdef (\m - \Delta_\eps )^{-1}(x)$ be the Green's function 
where $\Delta_{\eps}$
is the discrete Laplacian on $\Lambda_{\eps}$.
Choosing $a^{\eps}=C_{\eps}(0)$ as the (discrete) Wick constant, 
%\footnote{As a minor abuse of notation, we write $a_\eps$ for discrete Wick constant in this section, and for the Wick constant in smooth mollification in Section~\ref{s:uni}.}\hao{added a footnote}
we recall
\begin{align*}
	\Wick{\Phi_{i,\eps}\PPhi_{\eps}^2}&\eqdef\Phi_{i,\eps}\PPhi_{\eps}^2-a^{\eps}(N+2)\Phi_{i,\eps}\eqdef \Phi_{i,\eps}\Big(\sum_{j=1}^N\Phi_{j,\eps}^2\Big)-a^{\eps}(N+2)\Phi_{i,\eps},\\
	\Wick{\PPhi_{\eps}^2}&\eqdef\sum_{i=1}^N\Wick{\Phi_{i,\eps}^2}\eqdef\sum_{i=1}^N\Phi_{i,\eps}^2-N a^{\eps}.
\end{align*}
Similarly we  also define  $\Wick{(\PPhi_{\eps}^2)^2}$ and $\Wick{(\PPhi_{\eps}^2)^n}, n\geq 2$ as in \eqref{ob1}-\eqref{obn} with the RHS $Y_i, Z_i$ replaced by the related discrete version $Y_{i,\eps}$ and $Z_{i,\eps}$, which satisfy the discrete version of equations \eqref{eq:li1} and \eqref{eq:22} on $\Lambda_\eps$, i.e. for $i=1,\dots,N$
\begin{align*}
	\sL_\eps Z_{i,\eps}=\sqrt 2 \,\xi_{i,\eps},
\end{align*}
and
\begin{align*}
	\sL_\eps Y_{i,\eps}=-\frac1N\sum_{j=1}^N(Y_{j,\eps}^2Y_{i,\eps}+Y_{j,\eps}^2Z_{i,\eps}+2Y_{j,\eps}Y_{i,\eps}Z_{j,\eps}+2Y_{j,\eps}\Wick{Z_{i,\eps}Z_{j,\eps}}+\Wick{Z_{j,\eps}^2}Y_{i,\eps}+\Wick{Z_{i,\eps}Z_{j,\eps}^2}).
\end{align*}
Here $\xi_{i,\eps}$ is a discrete approximation of a space-time white noise $\xi_i$ on $\mR^+\times \mT^2$  constructed as follows:
$$\xi_{i,\eps}(t,x):=\eps^{-2}\<\xi_i(t,\cdot),1_{|\cdot-x|_\infty\leq \eps/2}\>,
\qquad (t,x)\in \mR^+\times \Lambda_{\eps},$$
where $|x|_\infty=|x_1|\vee |x_2|$ for $x=(x_1,x_2)$, and $\Wick{Z_{i,\eps}Z_{j,\eps}}, \Wick{Z_{j,\eps}^2}$ and $\Wick{Z_{i,\eps}Z_{j,\eps}^2}$ are defined similarly as in \eqref{e:wick-tilde}
(with a minor abuse of notation that $Z_{i,\eps}$ denotes the discretized $Z_i$ here and below while  in Section \ref{s:uni} $Z_{i,\eps}$ was smooth approximation).

Recall the Dyson--Schwinger equations (IBP) for $\Phi_\eps$
\begin{align}\label{eq:ibp}
	\E \Big(\frac{\delta F (\Phi_\eps)}{\delta \Phi_{1,\eps}(x)}\Big)
	= \E \Big( (\m - \Delta_\eps ) \Phi_{1,\eps}(x) F(\Phi_\eps)\Big)
	+ \frac{1}{N}  \E \Big( F(\Phi_\eps) \Wick{\Phi_{1,\eps}(x) \PPhi_\eps(x)^2} \Big),
\end{align}
where
$$\frac{\delta F (\Phi_{\eps})}{\delta \Phi_{1,\eps}(x)}=\lim_{\eta\to0}\frac1\eta(F(\Phi_{1,\eps}+\eta \frac{e_x}{\eps^2})-F(\Phi_{1,\eps}))$$
with $e_x:\Lambda_{\eps}\to [0,1]$, $e_x(x)=1$ and $e_x(y)=0$ for $y\neq x$.

We
write \eqref{eq:ibp} in terms of Green's function $C_{\eps}$:
\begin{equation}\label{e:IBP-CN}
	\int_{\Lambda_{\eps}} C_\eps(x-z ) \E \Big(\frac{\delta F (\Phi_\eps)}{\delta \Phi_{1,\eps}(z)}\Big) \dif z
	= \E \Big( \Phi_{1,\eps}(x) F(\Phi_\eps)\Big)
	+ \frac{1}{N}  \int_{\Lambda_\eps} C_\eps(x-z)
	\E \Big( F(\Phi_\eps) \Wick{\Phi_{1,\eps} \PPhi_\eps^2}(z) \Big) \dif z
\end{equation}
for any $x\in {\Lambda_{\eps}}$   and we write $\int_{\Lambda_{\eps}}f(z)\dif z\eqdef \eps^2\sum_{z\in \Lambda_{\eps}}f(z)$. We will also use the  shorthand notation
$$\cI_\eps f(x)\eqdef\int_{\Lambda_\eps} C_\eps(x-z )f(z)\dif z.$$
%
%By direct calculation we have the following result, which will be used frequently.
%
%\bl It holds that
%
%\el

We first prove two useful results using IBP \eqref{e:IBP-CN}.
The first one is Lemma~\ref{lem:IBP}, which allows us to
rewrite correlation functions involving
certain non-$O(N)$-invariant objects of the form
$\Wick{\Phi_{1,\eps}(\PPhi^2)^{m}_\eps}$
into correlation functions with only $O(N)$-invariant observables,
up to certain error terms.
This is then used in Lemma~\ref{lem:32},
where we derive important finite $N$ recursive formulas for
correlation functions of $O(N)$-invariant observables, up to error terms.

\bl\label{lem:IBP}
Consider for $m_1, m_2, n_i\in \mN_0$, $x_1, x_2, y_i\in \Lambda_\eps$, $i=1,\dots,k$
\begin{align}\label{Ib:phi}
	F(\Phi_\eps)=\Wick{ \Phi_{1,\eps}(\PPhi^2_\eps)^{m_1}}(x_1)\;
	\Wick{\Phi_{1,\eps}(\PPhi^2_\eps)^{m_2}}(x_2)\;
	\prod_{i=1}^k \Wick{ (\PPhi^2_\eps)^{n_i}}(y_i)
\end{align}
and
\begin{align*}
	G(\Phi_\eps)=\frac{N+2m_2}{N}C_\eps(x_1-x_2)\Wick{(\PPhi^2_\eps)^{m_1}}(x_1)\Wick{(\PPhi^2_\eps)^{m_2}}(x_2)\prod_{i=1}^k \Wick{ (\PPhi^2_\eps)^{n_i}}(y_i).
\end{align*}
It holds that
\begin{align*}
	\E(F(\Phi_\eps))=\E(G(\Phi_\eps))+O_N^\eps
\end{align*}
where $O_N^\eps$ is given in the proof.
\el
\begin{proof}
	We omit $\eps$ to simplify notation.
	Applying IBP \eqref{e:IBP-CN} to the following test function
	\begin{align*}
		F_1(\Phi)=\Wick{ (\PPhi^2)^{m_1}}(x_1)\Wick{\Phi_{1}(\PPhi^2)^{m_2}}(x_2)\prod_{i=1}^k \Wick{ (\PPhi^2)^{n_i}}(y_i),
	\end{align*}
	with $x$ in \eqref{e:IBP-CN} given by $x_1$, we obtain
\begin{equs}
		{}&\frac{N+2m_2}{N}C(x_1-x_2)
		\,\E\Big(\Wick{(\PPhi^2)^{m_1}}(x_1)\,\Wick{(\PPhi^2)^{m_2}}(x_2)\,\prod_{i=1}^k \Wick{ (\PPhi^2)^{n_i}}(y_i)\Big)
		\\&+2\sum_{j=1}^kn_jC(x_1-y_j)\E\Big(\Wick{ (\PPhi^2)^{m_1}}(x_1)\Wick{\Phi_{1}(\PPhi^2)^{m_2}}(x_2)\Wick{\Phi_1(\PPhi^2)^{n_j-1}}(y_j)\prod_{i=1,i\neq j}^k \Wick{ (\PPhi^2)^{n_i}}(y_i)\Big)
		\\&=\E(F(\Phi))+\frac1N\E\Big(F_1(\Phi)\cI(\Wick{\Phi_1\PPhi^2})(x_1)\Big). \label{e:defO}
	\end{equs}
Here, the first term on the LHS arises from
	$$\frac{\delta \Wick{\Phi_1(\PPhi^2)^{m_2}}}{\delta\Phi_1(x_2)}=\Big(\Wick{(\PPhi^2)^{m_2}}+2m_2\Wick{\Phi_1^2(\PPhi^2)^{m_2-1}}\Big)(x_2)\frac{e_{x_2}}{\eps^2},$$
	and using $O(N)$ symmetry to replace $\Wick{\Phi_1^2(\PPhi^2)^{m_2-1}}$ by $\frac1N\Wick{(\PPhi^2)^{m_2}}$.
The second term on the LHS arises from
	\begin{align}\label{ibp:1}
		\frac{\delta \Big(\Wick{\prod_{i=1}^k(\PPhi^2)^{n_i}}\Big)}{\delta\Phi_1(y_j)}=2n_j\Wick{\Phi_1(\PPhi^2)^{n_j-1}}(y_j)\Wick{\!\!\prod_{i=1,i\neq j}^k(\PPhi^2)^{n_i}}\frac{e_{y_j}}{\eps^2}
	\end{align}
and we used $$\Wick{\Phi_1(\PPhi^2)^{m_1}}=\Phi_1\Wick{(\PPhi^2)^{m_1}}-2m_1C(0)\Wick{\Phi_1(\PPhi^2)^{m_1-1}}$$
	 to absorb the term from $\frac{\delta \Wick{(\PPhi^2)^{m_1}}}{\delta\Phi_1}$.
	Notice that the first term on the LHS precisely gives $G(\Phi_\eps)$.
Defining 	
the other two terms (i.e. the 2nd term on LHS and the 2nd term on RHS) in \eqref{e:defO} as $O_N^\eps$, the result follows.
\end{proof}

%According to Proposition \ref{prop1}, we can determine that $O_N^\eps$ corresponds to terms of order $\frac{1}{\sqrt{N}}$. When comparing $F(\Phi_\eps)$ and $G(\Phi_\eps)$, we observe that the only term that does not contribute to the error part is the one that connects a line between two $\Phi_1$.
 According to Proposition \ref{prop1}, the terms defined as $O_N^\eps$ are smaller than $F(\Phi_\eps)$ and $G(\Phi_\eps)$ by a factor $O(\frac{1}{\sqrt{N}})$. The above lemma shows that under expectation, the main contribution
to  $F(\Phi_\eps)$ is obtained by replacing the two incidences of $\Phi_1$ at $x_1$ and $x_2$ by $C_\eps(x_1-x_2)$.

Furthermore, the terms defined as $O_N^\eps$ have the same structure as $F(\Phi_\eps)$, namely, each of them contains exactly two incidences of $\Phi_1$.

In the following we consider
$$f_{\mathbf{n},k,\eps}^{N}(y_1,\dots,y_{k})\eqdef\frac1{N^{\sum_{i=1}^kn_i/2}}\E\Big(\prod_{i=1}^k \Wick{ (\PPhi^2_\eps)^{n_i}}(y_i)\Big),$$
for $\mathbf{n}=(n_1,\dots,n_k)$ with $n_i\in \mN\cup \{0\}$ and $y_i\in \Lambda_\eps$, $i=1,\dots,k$. We also set $\Wick{ (\PPhi^2_\eps)^{0}}=1$.

\bl \label{lem:32}
For $\mathbf{n}=(n_1,\dots,n_k)\in \mN^k$, $y_i\in \Lambda_\eps$, $i=1,\dots,k$, one has
\begin{equation}\label{eq:inductive}
	\aligned
	&f_{\mathbf{n},k,\eps}^{N}(y_1,\dots,y_k)+\frac{N+2}N\int_{\Lambda_\eps} C^2_\eps(y_1-z)	f_{\hat{\mathbf{n}},k+1,\eps}^{N}(z,y_1,\dots,y_k)\dif z
	\\
	=&\sum_{j=2}^k2n_j\frac{N+2(n_j-1)}NC^2_\eps(y_1-y_j)f_{\widetilde{\mathbf{n}}_j,k,\eps}^{N}(y_1,\dots,y_k)
	+N^{-(\sum_{i=1}^kn_i/2)+1}Q_{N,\mathbf{n}}^\eps
	\endaligned
\end{equation}
with $\hat{\mathbf{n}}=(1,n_1-1,n_2,\dots,n_k)$ and $\widetilde{\mathbf{n}}_j=(n_1-1,n_2,\dots,n_{j-1},n_j-1,n_{j+1},\dots,n_k)$ and $Q_{N,\mathbf{n}}^\eps$ given in \eqref{de:Q} below.
\el
\begin{proof}
We omit $\eps$ as above.
Choosing test function in  IBP \eqref{e:IBP-CN} as
	\begin{equ}
		F(\Phi)=\Wick{ \Phi_1(\PPhi^2)^{n_1-1}}(y_1) \prod_{i=2}^k \Wick{ (\PPhi^2)^{n_i}}(y_i),
	\end{equ}
we have
\begin{equs}
		&\sum_{j=2}^k2n_jC(y_1-y_j)\E\Big(\Wick{ \Phi_1(\PPhi^2)^{n_1-1}}(y_1)\Wick{\Phi_1(\PPhi^2)^{n_j-1}}(y_j)\prod_{i=2,i\neq j}^k \Wick{ (\PPhi^2)^{n_i}}(y_i)\Big)
		\\
		&=\E\Big(\Wick{ \Phi_1^2(\PPhi^2)^{n_1-1}}(y_1) \prod_{i=2}^k \Wick{ (\PPhi^2)^{n_i}}(y_i)\Big)+\frac1N\E\Big(F(\Phi)\cI(\Wick{\Phi_1\PPhi^2})(y_1)\Big)\label{e:same}
		\\
		&=\frac1N\E\Big(\prod_{i=1}^k \Wick{ (\PPhi^2)^{n_i}}(y_i)\Big)+\frac1N\E\Big(F(\Phi)\cI(\Wick{\Phi_1\PPhi^2})(y_1)\Big),
\end{equs}
	where we used \eqref{ibp:1} to have the first term and we also used
	$$
	\Wick{ \Phi_1^2(\PPhi^2)^{n_1-1}}=\Phi_1\Wick{ \Phi_1(\PPhi^2)^{n_1-1}}-\Big(C(0)\Wick{ (\PPhi^2)^{n_1-1}}+2(n_1-1)C(0)\Wick{ \Phi_1^2(\PPhi^2)^{n_1-2}}\Big)
	$$
	to absorb the term from $\frac{\delta}{\delta\Phi_1} (\Wick{ \Phi_1(\PPhi^2)^{n_1-1}})$.
In view of  Proposition \ref{prop1},
all the terms  on the LHS and the RHS of \eqref{e:same} are of the same order.
		
For 	each term in \eqref{e:same} having two incidences of $\Phi_1$ (namely the term in the first line  and the last term in the last line),
we can apply  Lemma~\ref{lem:IBP}, which  replaces the two $\Phi_1$ by  the Green's function $C$,  and some error terms. Hence we obtain
	\begin{align*}
		&\frac1N\E\Big(\prod_{i=1}^k \Wick{ (\PPhi^2)^{n_i}}(y_i)\Big)
		\\
		&=\sum_{j=2}^k2n_j\frac{N+2(n_j-1)}NC^2(y_1-y_j)\E\Big(\Wick{ (\PPhi^2)^{n_1-1}}(y_1)\Wick{(\PPhi^2)^{n_j-1}}(y_j)\prod_{i=2,i\neq j}^k \Wick{ (\PPhi^2)^{n_i}}(y_i)\Big)
		\\
		&\quad -\frac1N\frac{N+2}N\int C^2(y_1-z)\E\Big(\Wick{ (\PPhi^2)^{n_1-1}}(y_1) \prod_{i=2}^k \Wick{ (\PPhi^2)^{n_i}}(y_i)\Wick{\PPhi^2}(z)\Big)\dif z+Q_{N,\mathbf{n}}^\eps,
	\end{align*}
	where %the second term on the RHS comes from the last term in \eqref{e:same} by replacing two $\Phi_1$ by $C(y_1-z)$ and
	\begin{equ}\label{de:Q}
		Q_{N,\mathbf{n}}^\eps=\sum_{i=1}^4 Q_{N,\mathbf{n}}^{i,\eps},
	\end{equ}
	with  (remark that $Q_{N,\mathbf{n}}^{2,\eps}$ is from the first term in \eqref{e:same} and $Q_{N,\mathbf{n}}^{3,\eps}$ is from the last term in \eqref{e:same}, and they turn out to be the same)
	\begin{equs}%[e:Q1234]
		Q_{N,\mathbf{n}}^{1,\eps}&=\sum_{j,m=2,j\neq m}^k4n_jn_mC(y_1-y_j)C(y_1-y_m)
		\E\Big(\Wick{ (\PPhi^2)^{n_1-1}}(y_1)
		\\&\qquad\qquad\qquad \times\Wick{\Phi_1(\PPhi^2)^{n_j-1}}(y_j)\Wick{\Phi_1 (\PPhi^2)^{n_m-1}}(y_m)\prod_{i=2,i\neq j,m}^k \Wick{ (\PPhi^2)^{n_i}}(y_i)\Big),
		\\
		Q_{N,\mathbf{n}}^{2,\eps}=Q_{N,\mathbf{n}}^{3,\eps}&=\frac1N\sum_{j=2}^k2n_jC(y_1-y_j)
		\E\Big(\Wick{ (\PPhi^2)^{n_1-1}}(y_1)\cI(\Wick{\Phi_1\PPhi^2})(y_1)
		\\&\qquad\qquad\qquad\times\Wick{\Phi_1(\PPhi^2)^{n_j-1}}(y_j)\prod_{i=2,i\neq j}^k \Wick{ (\PPhi^2)^{n_i}}(y_i)\Big),
		%\\
		%Q_{N,\mathbf{n}}^{3,\eps}&=Q_{N,\mathbf{n}}^{2,\eps},
		\\
		Q_{N,\mathbf{n}}^{4,\eps}&=\frac1{N^2}
		\E\Big(\Wick{ (\PPhi^2)^{n_1-1}}(y_1)\cI(\Wick{\Phi_1\PPhi^2})(y_1)^2\prod_{i=2}^k \Wick{ (\PPhi^2)^{n_i}}(y_i)\Big).
	\end{equs}
	Hence, \eqref{eq:inductive} follows by multiplying both sides by $N^{-(\sum_{i=1}^kn_i/2)+1}$.
\end{proof}

Now we pass the above results to the continuum limit $\eps\to 0$.
Since $\Phi_\eps$ satisfies discrete version of the SPDEs \eqref{eq:Phi2d}, we also decompose $\Phi_\eps=Y_\eps+Z_\eps$ where $Y_\eps=(Y_{i,\eps})$ satisfies discrete version of the equations \eqref{eq:22}. Moreover, the uniform in $N$ estimates Lemma \ref{th:m2} hold for $(Y_{i,\eps})$ and Proposition \ref{prop1} hold for $\Phi_\eps$. More precisely,
Let $\mm$ be as in Lemma \ref{th:m2}. For $\ell_i\geq0$, $n_i\in\mN$, $i=1,\dots,n$, $n\in\mN$
and $\kappa>0$, one has 
\begin{align}\label{uni:d1}
	\E\Big(\prod_{i=1}^n \Big\|\Wick{(\PPhi^2_\eps)^{n_i}}\Big\|_{H^{-\kappa,\eps}}^{\ell_i}\Big)\lesssim N^{\frac12\sum_{i=1}^n{n_i}\ell_i}
\end{align}
and
\begin{align}\label{uni:d2}
	\E\Big(\prod_{i=1}^n \Big\|\Wick{\Phi_{1,\eps}(\PPhi^2)_\eps^{n_i}}\Big\|_{H^{-\kappa,\eps}}^{\ell_i}\Big)\lesssim N^{\frac12\sum_{i=1}^n{n_i}\ell_i}
\end{align}
where the proportional constants are independent of $N$ and $\eps$.
 We refer to \cite{SZZ21}, \cite{SSZZ2d} and \cite{GH18a} for more details.

Furthermore, for fixed $N$, and fixed $t\geq0$, the sequence
$(\cE^\eps\Phi_{{\eps}}(t))_{\eps}=(\cE^\eps\Phi_{{\eps,i}}(t))_{\eps}$ is tight in $(H^{-\kappa})^N$ for $\kappa>0$, where $\cE^\eps$ is the extension operator defined in Appendix \eqref{def:E}. Moreover,  every tight limit $\mu$ is an invariant measure of \eqref{eq:Phi2d}. By \cite[Lemma 5.7]{SSZZ2d} for fixed $N$ the invariant measure of equations \eqref{eq:Phi2d} is unique.  Hence, the whole sequence $(\cE^\eps\Phi_{{\eps}}(t))_{\eps}$ converges to $\Phi(t)$ weakly in $(H^{-\kappa})^N$, where $\Phi$ is the stationary solution to equations \eqref{eq:Phi2d}.
For $k\in\mN$ and every $\mathbf{n}=(n_1,\dots,n_k)\in \mN^k$, we  define the continuous version of $f_{\mathbf{n},k,\eps}^N$
\begin{align*}
	\<f_{\mathbf{n},k}^N,\varphi\>\eqdef\lim_{\eps\to0}\int \E\Big(\prod_{i=1}^k\cE^\eps\frac{1}{ N^{n_i/2}}\Wick{(\PPhi^2_\eps)^{n_i}}(y_i)\Big)\varphi(y_1,\dots,y_k)\prod_{i=1}^k\dif y_i
\end{align*}
for every $\varphi\in \cS(\mT^{2k})$, $i=1,\dots, k$. Since for fixed $N$ the law of $\frac{1}{ N^{n_i/2}}\Wick{(\PPhi^2)^{n_i}}$  are uniquely determined, we have
\begin{align*}
	\<f_{\mathbf{n},k}^N,\otimes_{i=1}^{k}\varphi_i\>= \E\Big(\prod_{i=1}^{k}\Big\<\frac{1}{ N^{n_i/2}}\Wick{(\PPhi^2)^{n_i}},\varphi_i\Big\>\Big),
\end{align*}
for $\varphi_i\in \cS(\mT^2)$, $i=1,\dots, k$.

Now we send $\eps\to0$ on both sides of
the formula \eqref{eq:inductive} obtained in Lemma~\ref{lem:32}. To this end, we introduce the following integral operators:
\begin{equation}\label{def:I}
	\aligned
	\cI f=(\m-\Delta)^{-1}f, \qquad (\cI_1f)(y)=\int C^2(y-z)f(z)\dif z,\quad
	%\\
	%\cI_2(f)(y)=\int C^3(y-z)f(z)\dif z  \qquad y\in\mT^2, 
	f\in C^\infty(\mT^2),
	\endaligned
\end{equation}
where $C$ is the Green function of $\Delta-\m$.
By Lemma \ref{lem:el} in Appendix we can extend these operators from Sobolev spaces $H^\alpha$ to $H^{\alpha+2-\kappa}$, $\alpha<0, \kappa>0$.

\bl\label{le:fkN} It holds that
for $\mathbf{n}=(n_1,\dots,n_k)$ with $n_i\in \mN$
\begin{equs}[eq:inductive1]
	{}&\<f_{\mathbf{n},k}^{N},\varphi^{\otimes k}\>+\frac{N+2}N\int C^2(y_1-z)\prod_{i=1}^k\varphi_i(y_i)	f_{\hat{\mathbf{n}},k+1}^{N}(\dif z,\dif y_1,\dots,\dif y_k)
	\\=&\sum_{j=2}^k2n_j\frac{N+2(n_j-1)}N\int C^2(y_1-y_j)\prod_{i=1}^k\varphi_i(y_i)f_{\widetilde{\mathbf{n}}_j,k}^{N}(\dif y_1,\dots,\dif y_k)+\frac1{N^{(\sum_{i=1}^kn_i/2)-1}}\<Q_{N,\mathbf{n}},\varphi^{\otimes k}\>,
\end{equs}
with $\hat{\mathbf{n}}=(1,n_1-1,n_2,\dots,n_k)$ and $\widetilde{\mathbf{n}}_j=(n_1-1,n_2,\dots,n_{j-1},n_j-1,n_{j+1},\dots,n_k)$ and $\varphi^{\otimes k}=\otimes_{i=1}^k\varphi_i$ for $\varphi_i\in \cS(\mT^2)$, $i=1,\dots,k$,  where
$Q_{N,\mathbf{n}}=\sum_{i=1}^4 Q_{N,\mathbf{n}}^{i}$
with
\begin{align*}
&\<Q_{N,\mathbf{n}}^{1},\varphi^{\otimes k}\>
\\&=\sum_{\substack{j,m=2 \\ j\neq m}}^k \!\! 4n_jn_m
	\E\Big(\Big\<\Wick{ (\PPhi^2)^{n_1-1}}\cI(\Wick{\Phi_1(\PPhi^2)^{n_j-1}}\varphi_j)\cI(\Wick{\Phi_1 (\PPhi^2)^{n_m-1}}\varphi_m),\varphi_1\Big\> %\\&\qquad\qquad\qquad\qquad
\!\!\! \prod_{\substack{i=2 \\ i\notin\{j,m\}}}^k \!\!\! \<\Wick{ (\PPhi^2)^{n_i}},\varphi_i\>\Big),
\\
&\<Q_{N,\mathbf{n}}^{2},\varphi^{\otimes k}\>=\<Q_{N,\mathbf{n}}^{3},\varphi^{\otimes k}\>
\\	&=\frac1N\sum_{j=2}^k2n_j
	\E\Big(\Big\<\Wick{ (\PPhi^2)^{n_1-1}}\cI(\Wick{\Phi_1\PPhi^2})\cI(\Wick{\Phi_1(\PPhi^2)^{n_j-1}}\varphi_j),\varphi_1\Big\>
	\prod_{i=2,i\neq j}^k \<\Wick{ (\PPhi^2)^{n_i}},\varphi_i\>\Big),
%	\\
%	Q_{N,\mathbf{n}}^{3}&=Q_{N,\mathbf{n}}^{2},
\\
	&\<Q_{N,\mathbf{n}}^{4},\varphi^{\otimes k}\>=\frac1{N^2}
	\E\Big(\Big\<\Wick{ (\PPhi^2)^{n_1-1}}(\cI(\Wick{\Phi_1\PPhi^2}))^2,\varphi_1\Big\>\prod_{i=2}^k \<\Wick{ (\PPhi^2)^{n_i}},\varphi_i\>\Big).
\end{align*}
\el
\begin{proof} By  \cite[Lemma 2.24]{MP19}, we know that $\cE^\eps$ is uniformly  bounded in $\eps$ operators from discrete Besov space to the continuous Besov space. 
	Using Lemma \ref{lem:Epro}, Lemma \ref{Cepro} and the fact that Proposition \ref{prop1} also holds in the discrete setting i.e. \eqref{uni:d1} and \eqref{uni:d2}, we have, for fixed $N\in\mN$
	$$\<\cE^\eps Q_{N,\eps}^i,\otimes_{i=1}^k \varphi_i\>\to \<Q_N^i,\otimes_{i=1}^k  \varphi_i\>,\quad \eps\to0.$$
	We also refer to \cite[Section 4.2]{SZZ21} for more details on the proof of the above convergence.
The lemma then follows from Lemma~\ref{lem:32} and sending $\eps\to 0$.
\end{proof}

Later we will take the large $N$ limit on both sides of \eqref{eq:inductive1}.
%We will need to test each $Q_N^{i}$ agaist test function $C^2(y_1-y_j)\varphi^{\otimes k}$, which are written as  $C^2 \varphi^{\otimes k}$ as a shorthand notation.
The following lemma shows the large $N$ behavior of the ``error term'' $Q_{N,\mathbf{n}}$.

\bl\label{lem:wqnn} Let $\m$ be as in Lemma \ref{th:m2}. It holds that
\begin{align*}
	\frac1{N^{(\sum_{i=1}^kn_i/2)-1}}|\<{Q}_{N,\mathbf{n}}, \varphi^{\otimes k}\>|\lesssim \frac1{\sqrt N}
\end{align*}
for $\varphi^{\otimes k}=\otimes_{i=1}^k  \varphi_i$ with $\varphi_i\in\cS(\mT^2)$, $i=1,\dots,k$, where the proportional constant is independent of $N$.
\el
\begin{proof}
	Using  Lemma \ref{lem:emb} and \eqref{eq:sch}  we find that for $\kappa>0$
	\begin{align}\label{es}
		\|\cI(f)\|_{\bC^{1-\kappa}}\lesssim \|f\|_{H^{-\kappa}},
	\end{align}
	which combined with Lemma \ref{lem:multi} leads to
	\begin{align*}
		\Big|\Big\<\Wick{ (\PPhi^2)^{n_1-1}}\cI(\Wick{\Phi_1(\PPhi^2)^{n_j-1}}\varphi_j)\cI(\Wick{\Phi_1 (\PPhi^2)^{n_m-1}}\varphi_m),\varphi_1\Big\>\Big|\lesssim B_{2n_1-2}B_{2n_j-1}B_{2n_m-1},
	\end{align*}
where we recall \eqref{no:B} for the notation $B$.
This implies that
\begin{equ}
\frac1{N^{(\sum_{i=1}^kn_i/2)-1}}|\<Q_{N,\mathbf{n}}^1,\varphi^{\otimes k}\>|\lesssim 
\frac1{N^{(\sum_{i=1}^kn_i/2)-1}}
\E \Big(B_{2n_1-2}B_{2n_j-1}B_{2n_m-1}
\prod_{\substack{i=2 \\ i \notin \{j,m\}}}^kB_{2n_i}\Big).
\end{equ}
By Proposition \ref{prop1},
	\begin{align*}
		\frac1{N^{(\sum_{i=1}^kn_i/2)-1}}|\<Q_{N,\mathbf{n}}^1,\varphi^{\otimes k}\>|\lesssim \frac1{\sqrt N}.
	\end{align*}
	For the first part in $\<Q_{N,\mathbf{n}}^2,\varphi^{\otimes k}\>$
	we also use \eqref{es} and Lemma \ref{lem:multi} to have
	\begin{align*}
		\Big|\Big\<\Wick{ (\PPhi^2)^{n_1-1}}\cI(\Wick{\Phi_1(\PPhi^2)^{n_j-1}}\varphi_j)\cI(\Wick{\Phi_1 \PPhi^2}),\varphi_1\Big\>\Big|\lesssim B_{2n_1-2}B_{2n_j-1}B_{3}.
	\end{align*}
	Hence, we use Proposition \ref{prop1} to have
	\begin{align*}
		&\frac1{N^{(\sum_{i=1}^kn_i/2)-1}}|\<Q_{N,\mathbf{n}}^2,\varphi^{\otimes k}\>|\lesssim \frac1{N^{\sum_{i=1}^kn_i/2}}\E \Big(B_{2n_1-2}B_{2n_j-1}B_{3}\prod_{i=2,i\neq j}^kB_{2n_i}\Big)
		\lesssim\frac1{\sqrt N}.
	\end{align*}
	For the first part in $\<Q_{N,\mathbf{n}}^4,\varphi^{\otimes k}\>$ we use \eqref{es} and Lemma \ref{lem:multi} to have
	\begin{align*}
		\Big|\Big\<\Wick{ (\PPhi^2)^{n_1-1}}(\cI(\Wick{\Phi_1 \PPhi^2}))^2,\varphi_1\Big\>\Big|\lesssim B_{2n_1-2}B_{3}^2.
	\end{align*}
	Hence, by Proposition \ref{prop1},
	\begin{align*}
		&\frac1{N^{(\sum_{i=1}^kn_i/2)-1}}|\<Q_{N,\mathbf{n}}^4,\varphi^{\otimes k}\>|\lesssim \frac1{N^{(\sum_{i=1}^kn_i/2)+1}}\E \Big(B_{2n_1-2}B_{3}^2\prod_{i=2}^kB_{2n_i}\Big)
		\lesssim\frac1{\sqrt N}.
	\end{align*}
\end{proof}

By Proposition \ref{prop1} and Lemma \ref{th:m2}, for fixed $k\in\mN$, $\{f_{\mathbf{n},k}^N\}_N$ are uniformly bounded in $(H^{-\kappa})^k$. By Lemma \ref{le:fkN} and Lemma \ref{lem:wqnn}, every subsequential limit $f_{\mathbf{n},k}$  satisfies the following equation
\begin{equation}\label{eq:inductive2}
	\aligned
	&\<f_{\mathbf{n},k},\varphi^{\otimes k}\>+\int C^2(y_1-z)\prod_{i=1}^k\varphi_i(y_i)	f_{\hat{\mathbf{n}},k+1}(\dif z,\dif y_1,\dots,\dif y_k)
	\\&=\sum_{j=2}^k2n_j\int C^2(y_1-y_j)\prod_{i=1}^k\varphi_i(y_i)f_{\widetilde{\mathbf{n}}_j,k}(\dif y_1,\dots,\dif y_k),
	\endaligned
\end{equation}
with $\hat{\mathbf{n}}=(1,n_1-1,n_2,\dots,n_k)$ and $\widetilde{\mathbf{n}}_j=(n_1-1,n_2,\dots,n_{j-1},n_j-1,n_{j+1},\dots,n_k)$. Intuitively the second term on the LHS of \eqref{eq:inductive2} comes from the interaction between the observables and $\Wick{\Phi_1\PPhi^2}$. 

 \section{Large $N$ limits of observables}\label{sec:large}

 In this section, we focus on analyzing the behavior of the observables \eqref{ob} in the large $N$ limit.  We first obtain the large $N$ limit of the first observables $\frac1{\sqrt N}\Wick{\PPhi^2}$ using the recursive relation derived in \eqref{eq:inductive2} in Section \ref{sec:4.1}. We then derive in Section \ref{sec:4.3} the large $N$ limit of the observables $\frac1{ N^{n/2}}\Wick{(\PPhi^2)^n}$ by identifying the solutions to the recursive equations \eqref{eq:inductive2} as the $k$-point functions of $\WickC{\cQ^n}$.

 \subsection{Large $N$ limit of $\frac1{\sqrt N}\Wick{\PPhi^2}$}\label{sec:4.1}
%In this section we prove the large $N$ limit of $\frac1{\sqrt N}\Wick{\PPhi^2}$. Our main techniques are uniform in $N$ estimates for SPDEs from Section \ref{s:uni} and IBP \eqref{e:IBP-CN}.

Let
 $$
 f_{k,\eps}^N(y_1,\dots,y_k)
 \eqdef
 \frac1{N^{k/2}}\E\Big(\prod_{j=1}^{k}\Wick{\PPhi_\eps^2}(y_j)\Big), \quad y_i\in \Lambda_\eps=\eps \mZ^2\cap \mT^2, \quad i=1,\dots,k.
 $$
 Choosing $\mathbf{n}=(1,\dots,1)$ in
 the formula \eqref{eq:inductive} of Lemma~\ref{lem:32}
 we obtain the following result.

 \bl\label{lem:3.1} Let $\mm$ be as in Lemma \ref{th:m2}. It holds that
\begin{equs}\label{eq:fkN}
	f_{k,\eps}^N(y_1,\dots,y_k)&+\int_{\Lambda_{\eps}} C_\eps^2(y_1-z)f_{k,\eps}^N(z,y_2,\dots,y_k)\dif z
	\\&= 2\sum_{m=2}^k C_\eps^2(y_1-y_m) f_{k-2,\eps}^N(y_2,\dots,y_k\backslash\{y_m\})+Q_{N,\eps}
\end{equs}
where $Q_{N,\eps}$ is of order $\frac1{\sqrt N}$.
 \el

Here and in the sequel,  notation such as
$f_{k-2,\eps}^N(y_2,\dots,y_k\backslash\{y_m\})$
is understood in the obvious way, namely 
$f_{k-2,\eps}^N(y_2,\dots,y_{m-1},y_{m+1},\cdots, y_k)$.
 Since for fixed $N$ the law of $\frac{1}{\sqrt N}\Wick{\PPhi^2}$ is uniquely determined, we have
\begin{align*}
	\<f_k^N,\otimes_{i=1}^k\varphi_i\>= \E\prod_{i=1}^k\Big\<\frac{1}{\sqrt N}\Wick{\PPhi^2},\varphi_i\Big\>,
\end{align*}
for $\varphi_i\in \cS(\mT^2)$, $i=1,\dots, k$.

%\hao{Maybe we can define a short hand notation and simplify the expressions below?
%For example $\varphi^{\otimes k} = \otimes_{i=1}^k\varphi_i$ or $\vec\varphi^{\otimes k} = \otimes_{i=1}^k\varphi_i$
%}

%Using Proposition \ref{prop1} we find that for fixed $k\in\mN$, $\{f_k^N\}_N$ is uniformly bounded in $(H^{-\kappa})^k$. Hence, we can find a subsequence of $\{f_k^N\}_N$ converging in $(H^{-2\kappa})^k$.
By \eqref{eq:inductive2} %Lemma \ref{lem:wqnn}, \hao{and Lemma \ref{le:fkN}?} 
we have that
every subsequential limit $f_k$ of $f_k^N, N\to\infty$ satisfies the following equations (recall the notation \eqref{def:I})
\begin{equation}\label{eq:fk}
	\aligned
	&\<f_{k},\otimes_{i=1}^k\varphi_i\>+\<\cI_1(f_{k}),\otimes_{i=1}^k\varphi_i\>
	\\&= 2\sum_{m=2}^k \int C^2(y_1-y_m)\varphi_1(y_1)\varphi_m(y_m)\dif y_1\dif y_m\<f_{k-2},\otimes_{i=2,i\neq m}^k\varphi_i\>,
	\endaligned
\end{equation}
where $\cI_1(f_{k})$ only acts on the first variable and $\cI_1(f_{k})=\int C^2(y_1-z)f_k(\dif z,\dif y_2,\dots,\dif y_k)$.
By iteration and Fourier transform, 
it is easy to see that the solution to \eqref{eq:fk} is unique. In the following we give an explicit formula for the solution to \eqref{eq:fk}. To this end, we start with the case that $k$ is odd.

\bl\label{lem:1}  Let $\mm$ be as in Lemma \ref{th:m2}. One has
$f_k\equiv0$ if $k$ is odd.
\el

\begin{proof} 
In the proof we first apply IBP \eqref{e:IBP-CN} to calculate $\E\Wick{\PPhi^2}$, which is a constant by translation invariance. We also omit $\eps$ for notation simplicity.
	Choosing $F(\Phi) = \Phi_1(x)$ in \eqref{e:IBP-CN},
	the LHS of \eqref{e:IBP-CN} only gives a Wick constant which can be absorbed into the RHS, which implies that
	\begin{align}\label{e:phi2}
		0=\E \Big( \Wick{\Phi_1(x)^2}\Big)
		+ \frac{1}{N} \int C(x-z )
		\E \Big(  \Phi_1(x) \Wick{\Phi_1(z) \PPhi(z)^2} \Big) \dif z.
	\end{align}
By symmetry we find the first term on the RHS is
$\frac1N\E ( \Wick{\PPhi(x)^2})$. For the second term on the RHS we  choose
$F(\Phi)= \int C(x-z ) \Wick{\Phi_1(z) \PPhi(z)^2}  \dif z$ in \eqref{e:IBP-CN} to obtain
	\begin{equ}
		(1+\frac2N)\int C^2(x-z)  \E \Big( \Wick{\PPhi(z)^2}\Big) \dif z
		=
		\E \Big( \Phi_1(x) \cI(\Wick{   \Phi_1 \PPhi^2})(x) \Big)
		+
		\frac{1}{N}
		\E \Big(  \cI(\Wick{\Phi_1\PPhi^2})(x)^2 \Big).
	\end{equ}
	Substituting \eqref{e:phi2} into the above equality and using symmetry, we find
	\begin{equs}
		(1+\frac2N)\int C^2(x-z)  \E \Big( \Wick{\PPhi(z)^2}\Big) \dif z
		&= - \E \Big( \Wick{\PPhi(x)^2}\Big)
		+
	\frac{1}{N}
	\E \Big(  \cI(\Wick{\Phi_1\PPhi^2})(x)^2  \Big).
	\end{equs}
	As $\E \big( \Wick{\PPhi(x)^2}\big)$ is independent of $x$, we can write the above equality as
	\begin{equs}\label{Phi2}
		&\E \Big( \Wick{\PPhi(x)^2}\Big)\Big( (1+\frac2N)\int C^2(x-z)  \dif z+1\Big)
		= 	
		\frac{1}{N}
		\E \Big(   \cI (\Wick{\Phi_1\PPhi^2})(x)^2  \Big).
	\end{equs}
%	i.e.
%	\begin{align}
%		\E \Big( \Wick{\PPhi(z)^2}\Big)(1+\widehat{C^2}(0))=\frac{1}{N}
%		\int C(x-z_1)C(x-z_2)
%		\E \Big(  \Wick{\Phi_1(z_1) \PPhi(z_1)^2} \Wick{\Phi_1(z_2) \PPhi(z_2)^2} \Big) \dif z_1\dif z_2
%	\end{align}
Now we  use the extension operator $\cE^\eps$ to extend both sides to the functions on $\mT^2$. Letting $\eps\to0$ and applying \cite[Lemma A.9, Lemma A.10]{SZZ21}, we obtain \eqref{Phi2} holds in the continuum. 
%	\begin{equs}\label{Phi2}
%	&\E \Big( \Wick{\PPhi(x)^2}\Big)\Big( (1+\frac2N)\int C(x-z)^2  \dif z+1\Big)
%	= 	\frac{1}{N}
%	\E \Big(  \cI(\Wick{\Phi_1 \PPhi^2})(x)^2 \Big).
%\end{equs}
By Proposition \ref{prop1}, the RHS of \eqref{Phi2} is of order $1$. Hence,
\begin{align*}
	|f_1^N|=\frac1{\sqrt N}|\E(\Wick{\PPhi^2})|\lesssim \frac1{\sqrt N}.
\end{align*}
Letting $N\to\infty$, we find that $f_1\equiv0$. Substituting $f_1$ into \eqref{eq:fk}, using uniqueness of the solutions to \eqref{eq:fk} and by induction, the result follows.
\end{proof}

By \eqref{eq:fk} with $k=2$ (see also \cite[Theorem 6.5]{SSZZ2d}) and translation invariance we obtain that $f_2(y_1,y_2)=G(y_1-y_2)$ with $G\in L^2(\mT^2) $ satisfying \eqref{e:G}, or more explicitly
$$\widehat G= \frac{2\widehat{C^2}}{1+\widehat{C^2}},$$
where $\widehat f$ denotes Fourier transform of $f$ \footnote{Compared to \cite[Theorem 6.5]{SSZZ2d} we have extra $\frac12$ in the coefficient of $|\nabla \phi_\eps|^2$ in $\nu_\eps^N$, which makes that there's no extra $2$ in the denominator.}. In the sequel we also write $G(y_1-y_2)=G(y_1,y_2)$. In the following lemma we use $f_2$ to give an explicit formula of general $f_k$.

\bl\label{lem:2}  Let $\m$ be as in Lemma \ref{th:m2}. It holds that
\begin{align}\label{def:fk}
f_{k}(y_1,\dots,y_{k})
= \sum_{\pi}\prod_{j=1}^{k/2}f_2(y_{\pi(2j-1)},y_{\pi(2j)}), \qquad k\in 2\mN, \quad k\geq 4,
\end{align}
where $\pi$ runs through pairing permutations of $\{ 1, \dots, k\}$.
\el
Here, pairing permutations are simply permutations modulo possibly swapping the values of  $\pi(2j-1)$ and $\pi(2j)$ for any $j$.
\begin{proof} 
We claim that $f_k$ given in \eqref{def:fk} satisfies \eqref{eq:fk}.
We prove by induction. 
Note that for $k=2$
\begin{align*}
	f_2(y_1,y_2)=G(y_1-y_2),
\end{align*}
with $G$ satisfying
\begin{align}\label{eq:G}
	G(x-y)=2C^2(x-y)-\int C^2(x-z)G(z-y)\dif z.
\end{align}
%For $k=4$ we want to verify that
%\begin{align*}
%	G(y_1-y_2)G(y_3-y_4)+G(y_1-y_3)G(y_2-y_4)+G(y_1-y_4)G(y_2-y_3)
%\end{align*}
%satisfy \eqref{eq:fk} with $k=4$.
%We claim that the RHS of \eqref{def:fk} satisfies \eqref{eq:fk}. 
Suppose that $f_{k}$ given by \eqref{def:fk} satisfies \eqref{eq:fk}. We denote the RHS of \eqref{def:fk} by $\tilde f_k$. We then use \eqref{def:fk} to write
\begin{align}\label{eq:fk2}
	\tilde f_{k+2}(y_1,\dots,y_{k+2})=\sum_{m=2}^{k+2}G(y_1-y_m) f_k(y_2,\dots,y_{k+2}\backslash \{y_m\}).
\end{align}
Substituting \eqref{eq:fk2} into \eqref{eq:fk} and using \eqref{eq:G} we obtain
\begin{align*}
	&\tilde f_{k+2}(y_1,\dots,y_{k+2})+\int C^2(y_1-z)\tilde f_{k+2}(z,y_2,\dots,y_{k+2})\dif z
	\\&=
	\sum_{m=2}^{k+2}\Big(G(y_1-y_m)+\int C^2(y_1-z)G(z-y_m)\dif z\Big) f_k(y_2,\dots,y_{k+2}\backslash \{y_m\})
	\\&=2\sum_{m=2}^{k+2}C^2(y_1-y_m)f_k(y_2,\dots,y_{k+2}\backslash \{y_m\}),
\end{align*}
which proved our claim, so the result follows by
 uniqueness of the solutions to \eqref{eq:fk}.
	\end{proof}

Combining Lemma \ref{lem:1} and Lemma \ref{lem:2} we give the following characterization of the limit for the  observables
$\frac1{\sqrt N}\Wick{\PPhi^2}$.

\bt\label{th:1} Let $\m$ be as in Lemma \ref{th:m2}.  Any tight limit of $(\frac1{\sqrt N}\Wick{\PPhi^2})_N$ in $H^{-\kappa}, \kappa>0$ is a Gaussian field with mean zero and the covariance given by $G(x-y)$. Hence, the whole sequence $\frac1{\sqrt N}\Wick{\PPhi^2}$ converges in distribution to this Gaussian field in $H^{-\kappa},\kappa>0$.
\et
\begin{proof}
	By Proposition \ref{prop1} we know that $(\frac1{\sqrt N}\Wick{\PPhi^2})_N$ is tight in $H^{-\kappa}$ for every $\kappa>0$. Combining Lemma \ref{le:fkN} and Lemma \ref{lem:1} we have that the $k$-point functions of every tight limit satisfy \eqref{eq:fk}. The solutions to \eqref{eq:fk} are given in Lemma \ref{lem:1} and Lemma \ref{lem:2}. Hence, the law of every tight limit is the same and is given by the Gaussian field. Hence, the result follows.
\end{proof}

Let $\cQ$  denote the Gaussian field obtained in Theorem \ref{th:1}. We can analyze the regularity of $\cQ$ in the following result.

\bl It holds that for every $\kappa>0$, $p\geq 1$
\begin{align*}
	\E\|\cQ\|_{\bC^{-\kappa}}^p\lesssim1.
\end{align*}
\el
\begin{proof}
	Since the correlation of $\cQ$ is given by $G$, we have for $j\geq-1$
	\begin{align*}
		\E|(\Delta_j\cQ)(x)|^2= \sum_{k\in\mZ^2}\theta_j(k)^2\widehat{G}(k)\lesssim \sum_{k\in\mZ^2}\theta_j(k)^2 \widehat{C^2}(k)\lesssim 2^{j\kappa},
	\end{align*}
for $\kappa>0$,
where $(\Delta_j)_{j\geq-1}$ are the Littlewood--Paley blocks and $(\theta_j)_{j\geq-1}$ is the dyadic partition of unity.  Now it is standard to apply Gaussian hypercontractivity and Lemma \ref{lem:emb} to conclude the result.
\end{proof}

From the regularity of $\cQ$, which is not a random function but a distribution,  we cannot define $\cQ^n$ directly by using Lemma \ref{lem:multi}.
However, we can define $\Wick{\cQ^n}$ by the following result.

 \bp\label{pro:1} For every smooth mollifier $\rho_\eps$, set $G_\eps=\rho_\eps*G*\rho_\eps$ and $\cQ_\eps=\cQ*\rho_\eps$. For each $n\geq1$,
 \begin{equ}
 \Wick{\cQ_\eps^n}\eqdef G_\eps(0)^{n/2}H_n(G_\eps(0)^{-1/2}\cQ_\eps)
 \end{equ}
  converges as $\eps\to 0$ in $\bC^{-\kappa}$ $\bP$-a.s. for any $\kappa>0$ to the limit which is denoted by $\Wick{\cQ^n}$, where $H_n, n\geq1$ are  Hermite polynomials. The limit is independent of the choice of the mollifier.
\ep
\begin{proof}
 By Wick Theorem we have
\begin{align*}
	\E [|\Delta_j(\Wick{\cQ_\eps^n})(x)|^2]=\sum_{k\in\mZ^2}\theta_j(k)^2\widehat{G^n_\eps}(k)\lesssim 2^{j\kappa},
\end{align*}
where we used that $\widehat{G^n_\eps}=\widehat{G_\eps}*\widehat{G_\eps}*\dots*\widehat{G_\eps}$ to conclude that $\widehat{G^n_\eps}(k)\lesssim \frac{|k|^{\kappa}}{1+|k|^2}$ for $\kappa>0$. Now the result follows by Gaussian hypercontractivity and standard argument (cf. \cite[Section 6]{ZZ18}).
\end{proof}

\subsection{Large $N$ limit of $\frac1{N^{n/2}}\Wick{(\PPhi^2)^n}$}\label{sec:4.3}
In this section we consider  the large $N$ limit of the observables $\frac1{N^{n/2}}\Wick{ (\PPhi^2)^n}$ by finding the solutions to the recursive relation \eqref{eq:inductive2}.

We start with a discussion on the simplest cases to provide some intuition. Recall that in \cite[Theorem 6.5]{SSZZ2d}, it is shown that
\begin{equ}[e:h_1]
	\lim_{N\to\infty}\frac1N\E\Big(\Wick{(\PPhi^2)^2}\Big)=-(C^2*G)(0)\eqdef h_1.
\end{equ}
Consider the case $\mathbf{n}=(2,2)$ and in this case we write  the recursive relation \eqref{eq:inductive2} as
 	\begin{align}\label{e:h2}
 	&h_2(y_1,y_2)=4C^2(y_1-y_2)G(y_1-y_2)-\int C^2(y_1-z)f_{2,1}(y_1,z;y_2)\dif z,
 \end{align}
with
\begin{align*}
h_{2}(y_1,y_2)&\eqdef\lim_{N\to\infty}\lim_{\eps\to0}\frac1{N^{2}}\E\Big(\cE^\eps\Wick{(\PPhi^2_\eps)^2}(y_1)\cE^\eps\Wick{(\PPhi^2_\eps)^2}(y_2)\Big),
\\
f_{2,1}(y_1,z;y_2)&\eqdef\lim_{N\to\infty}\lim_{\eps\to0}\frac1{N^{2}}\E\Big(\cE^\eps\Wick{\PPhi^2_\eps}(y_1)\cE^\eps\Wick{\PPhi^2_\eps}(z)\cE^\eps\Wick{(\PPhi^2_\eps)^2}(y_2)\Big).
\end{align*}
Similarly, choosing $\mathbf{n}=(1,1,2)$ we obtain
	\begin{equs}[e:equ-f21]
	f_{2,1}& (z_1,z_2;y)
	+\int C^2(z_1-z_3) f_{2,1} (z_3,z_2;y)\dif z_3
	\\
	&=  -2C^2(z_1-z_2) \int C^2(x)G(x) \dif x
	+4C^2(z_1-y) G (z_2-y).
\end{equs}
% also understood as in \eqref{eq:inductive2} but omitting the test functions. 
Here \eqref{e:h2} and \eqref{e:equ-f21} are understood as in \eqref{eq:inductive2} but we omit the test functions for notation simplicity.
 In the following lemma solve the unknown $h_2$ and $f_{2,1}$ from  \eqref{e:h2} and \eqref{e:equ-f21}. 

\begin{lemma} It holds that
	\begin{equ}[e:f_21]
		f_{2,1} (z_1,z_2;y)
		= 2 G(z_1-y) G(z_2-y) -(C^2*G)(0) G(z_1-z_2),
	\end{equ}
	and
	\begin{equ}\label{e:h_2}
		h_2(y_1,y_2)=2G(y_1-y_2)^2+(C^2*G)^2(0).
	\end{equ}
\end{lemma}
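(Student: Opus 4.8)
The plan is to verify that the two displayed formulas solve the recursive relations \eqref{e:h2} and \eqref{e:equ-f21}, and then to invoke the uniqueness of solutions to \eqref{eq:inductive2} already established above. Since \eqref{e:equ-f21} determines $f_{2,1}(\,\cdot\,,z_2;y)$ through the invertible operator $\mathrm{Id}+\cI_1$ acting on the first variable, it suffices to plug the candidate \eqref{e:f_21} into its left-hand side and check that the stated right-hand side emerges. The single computational input I would use throughout is the defining identity for $G$, namely $C^2*G=2C^2-G$ (equivalently \eqref{eq:G}), which converts every convolution of $C^2$ against $G$ into a combination of $C^2$ and $G$ with no new integrals. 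I would also record the elementary fact that $\int C^2(x)G(x)\,\dif x=(C^2*G)(0)$, using that $C^2$ is even.

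For the first formula, substituting $f_{2,1}(z_1,z_2;y)=2G(z_1-y)G(z_2-y)-(C^2*G)(0)G(z_1-z_2)$ into the left-hand side of \eqref{e:equ-f21} produces, besides the candidate itself, the term $\int C^2(z_1-z_3)f_{2,1}(z_3,z_2;y)\,\dif z_3$. Here the two convolutions $\int C^2(z_1-z_3)G(z_3-y)\,\dif z_3$ and $\int C^2(z_1-z_3)G(z_3-z_2)\,\dif z_3$ each collapse by \eqref{eq:G} to $2C^2-G$ evaluated at $z_1-y$ and at $z_1-z_2$ respectively. Collecting terms, the two $G(z_1-y)G(z_2-y)$ contributions cancel and the two $G(z_1-z_2)$ contributions cancel, leaving exactly $4C^2(z_1-y)G(z_2-y)-2(C^2*G)(0)C^2(z_1-z_2)$, which is the right-hand side of \eqref{e:equ-f21}. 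Uniqueness then yields \eqref{e:f_21}.

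For $h_2$, I would insert the now-established \eqref{e:f_21} (with arguments $z_1=y_1$, $z_2=z$, $y=y_2$) into \eqref{e:h2}. The integral $\int C^2(y_1-z)f_{2,1}(y_1,z;y_2)\,\dif z$ splits into two pieces: one convolution $\int C^2(y_1-z)G(z-y_2)\,\dif z=2C^2(y_1-y_2)-G(y_1-y_2)$ again by \eqref{eq:G}, and the diagonal integral $\int C^2(y_1-z)G(y_1-z)\,\dif z=(C^2*G)(0)$. After multiplying out, the $4C^2(y_1-y_2)G(y_1-y_2)$ term cancels against the one already present in \eqref{e:h2}, and what remains is $2G(y_1-y_2)^2+(C^2*G)^2(0)$, which is \eqref{e:h_2}. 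I do not anticipate a genuine obstacle: the content is entirely the algebraic collapse afforded by the $G$-identity together with uniqueness of the recursion, and the only point requiring care is the bookkeeping of signs and of which variable each convolution acts on.
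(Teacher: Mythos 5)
Your proposal is correct and follows essentially the same route as the paper: both arguments reduce everything to the identity $C^2*G=2C^2-G$ from \eqref{e:G} to collapse the convolutions, verify that the stated formulas satisfy \eqref{e:equ-f21} and \eqref{e:h2}, and conclude by uniqueness of the solution to the recursion. The only cosmetic difference is that the paper reaches \eqref{e:f_21} by positing an ansatz $aG(z_1-y)G(z_2-y)+b(C^2*G)(0)G(z_1-z_2)$ and solving for $a=2$, $b=-1$, whereas you verify the coefficients directly; the computations are identical.
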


\begin{proof}
%	\begin{equs}[e:equ-f21]
%		f_{2,1}& (z_1,z_2;y)
%		+\int C(z_2-z_3)^2 f_{2,1} (z_1,z_3;y)\dif z_3
%		\\
%		&=  -2C(z_1-z_2)^2 C^2*G(0)
%		+4C(z_2-y)^2 G (z_1,y).
%	\end{equs}
It is elementary to solve \eqref{e:equ-f21}. For instance, by some speculation 
we could make an ansatz  
	\begin{equation}\label{eq:f21}
	f_{2,1} (z_1,z_2;y)
	= a G(z_1-y) G(z_2-y) + b (C^2*G)(0) G(z_1-z_2)
	\end{equation}
	and we try to solve the constants $a,b$.
	Recalling  $C^2 * G =2 C^2 - G$ from \eqref{e:G},
	\begin{equs}
		\int & C^2(z_1-z_3) f_{2,1} (z_3,z_2;y)\dif z_3
		\\ &=
		a G(z_2-y) \int  C^2(z_1-z_3) G(z_3-y)\dif z_3
		+ b (C^2*G)(0) \int  C^2(z_1-z_3) G(z_3-z_2)\dif z_3
		\\
		&= 2a G(z_2-y) C^2(z_1-y) - a G(z_1-y) G(z_2-y)
		\\&\qquad\qquad\qquad\qquad\qquad+ 2b (C^2*G)(0) C^2(z_1-z_2) -b (C^2*G)(0) G(z_1-z_2).
	\end{equs}
	Adding  the above  identity with \eqref{eq:f21}, some terms obviously cancel, and we obtain
	$$
	\mbox{LHS of } \eqref{e:equ-f21} =
	2a G(z_2-y) C^2(z_1-y)
	+ 2b (C^2*G)(0) C^2(z_1-z_2).
	$$
	So we choose
	$a =2, b =-1$ and we have proved \eqref{e:f_21}.
	
	Now by \eqref{e:h2} we also have (again using $C^2 * G = 2C^2 - G$)
	\begin{equs}
		h_2 (y_1,y_2)
	%	&=4C(y_1-y_2)^2f_2 (y_1,y_2)
	%	-\int C(y_1-z)^2f_{2,1} (y_1,z;y_2)\dif z
	%	\\
		&=4C^2(y_1-y_2) G (y_1-y_2)
		-2\int C^2(y_1-z) G(y_1-y_2)G(z-y_2)\dif z
		\\&\quad +(C^2*G)(0)\int C^2(y_1-z)  G(y_1-z)\dif z
		=\eqref{e:h_2}.
	%	&=4C(y_1-y_2)^2 G (y_1-y_2)
	%	- 4 G(y_1-y_2) C(y_1-y_2)^2
	%	+2G(y_1-y_2)^2+(C^2*G)^2(0)
	%	\\
		%&= 2G^2(y_1,y_2)+(C^2*G)^2(0).
	\end{equs}
\end{proof}

Before we proceed to prove the general results in Theorem~\ref{main:1}, let us give some motivation for the limit \eqref{lim:n1}. 
In this formal discussion, $C$, $G$, $\cQ$ etc. are understood via approximation as in Proposition \ref{pro:1}.

Note that $2G^2$ is the two-point correlation function of $\Wick{\cQ^2}$ (defined in Proposition \ref{pro:1}) since $\cQ$ is Gaussian. Approximately, $\frac{1}{N}\Wick{(\PPhi^2)^2}$ is the square of $\frac{1}{\sqrt{N}}\Wick{\PPhi^2}$ up to some difference in renormalization constants. Therefore, it is reasonable to speculate   that $\lim_{N}\frac{1}{N}\Wick{(\PPhi^2)^2}$ is given by a linear combination of $\Wick{\cQ^2}$ and $(C^2*G)(0)$, which then should be  $\Wick{\cQ^2}-(C^2*G)(0)$ since it has the correct expectation $h_1$ (see \eqref{e:h_1}) and the correct two-point function $h_2$ (see \eqref{e:h_2}).

To better understand the appearance of $(C^2*G)(0)$, 
recall the renormalization constant $G(0)$
in the definition of $\Wick{\cQ^2}$ in Proposition \ref{pro:1}. Also,  when we expand $\Wick{(\PPhi^2)^2}$ as in \eqref{e:formal4}, we have
 	%we need to introduce separate Wick renormalizations for the terms $\Wick{\Phi^2_i\Phi^2_j}$ with $i\neq j$ and $\Wick{\Phi_i^4}$ (See \eqref{e:Zik} and \eqref{e:ZiZn}).
 	%These renormalizations are defined differently, leading to
 	 a renormalization constant   $2C^2(0)$. %$2C_\eps^2(0)$  with $C_\eps=C*\rho_\eps$ for $\rho_\eps$ in Proposition \ref{pro:1}. 
%	We observe that the difference between these two renormalization constants can be expressed as:
Recall that their difference is 
 \begin{align}\label{e:c}
 	-C^2*G(0)=G(0)-2C^2(0),
 \end{align}
 where $G(0)$ and $2C^2(0)$ are infinite quantities (understood via approximation).  
 Hence, 
 it is reasonable to speculate that 
 the large $N$ limit of $\frac{1}{N}\Wick{(\PPhi^2)^2}$ is given by $\Wick{\cQ^2}-(C^2*G)(0)$.

For  general $n\in\mN$, %we first give a formal discussion as in Introduction. 
let us give a formal ``derivation'' for \eqref{lim:n1}
along the lines of \eqref{e:formal4}.
Assume that for $m\leq n$ we have 
$$\Wick{\Big(\frac1{\sqrt{N}}\PPhi^2\Big)^m}\to (2C^2(0))^{m/2}H_m((2C^2(0))^{-1/2}\cQ).$$
Then we {\it pretend} for the moment that $\Phi$ is simply its large $N$ limit $Z_i$, and
 formally apply Wiener chaos decomposition (c.f. \cite[Lemma 10.3]{Hairer14}) for the product of $\Wick{(\PPhi^2)^n}$ and $\Wick{\PPhi^2}$  to obtain
\begin{align*}
	\Wick{\Big(\frac1{\sqrt{N}}\PPhi^2\Big)^{n+1}}
	=&\Wick{\Big(\frac1{\sqrt{N}}\PPhi^2\Big)^n} \, \frac1{\sqrt{N}}\Wick{\PPhi^2}
	-C(0)\frac{4n}{N^{(n+1)/2}}\Wick{(\PPhi^2)^n}
	\\&-4C^2(0)\frac{n(n-1)}{N^{(n+1)/2}}\Wick{(\PPhi^2)^{n-1}}-2C^2(0)\frac{n}{N^{(n-1)/2}}\Wick{(\PPhi^2)^{n-1}}.
\end{align*}
On the RHS, the second term  arises from one contraction   between the new $\Wick{\PPhi^2}$ and one $\Wick{\PPhi^2}$ in $\Wick{(\PPhi^2)^n}$,
the third term arises from two contractions between  the new $\Wick{\PPhi^2}$ and two $\Wick{\PPhi^2}$ in $\Wick{(\PPhi^2)^n}$, and the last term is from two contractions between  the new $\Wick{\PPhi^2}$ and one $\Wick{\PPhi^2}$ in $\Wick{(\PPhi^2)^n}$, namely:
\begin{equ}
\begin{tikzpicture}
\node at (0,0) {$\Wick{\cdots \PPhi^2 \cdots  \PPhi^2 \cdots}$};
\node at (2,0) {$\Wick{\PPhi^2}$};
\draw (0.4,-0.3) -- (0.4,-0.5) -- (1.9,-0.5) -- (1.9,-0.3);
\end{tikzpicture}
\qquad
\begin{tikzpicture}
\node at (0,0) {$\Wick{\cdots \PPhi^2 \cdots  \PPhi^2 \cdots}$};
\node at (2,0) {$\Wick{\PPhi^2}$};
\draw (0.4,-0.3) -- (0.4,-0.4) -- (1.9,-0.4) -- (1.9,-0.3);
\draw (-0.6,-0.3) -- (-0.6,-0.5) -- (1.95,-0.5) -- (1.95,-0.3);
\end{tikzpicture}
\qquad
\begin{tikzpicture}
\node at (0,0) {$\Wick{\cdots \PPhi^2 \cdots  \PPhi^2 \cdots}$};
\node at (2,0) {$\Wick{\PPhi^2}$};
\draw (0.4,-0.3) -- (0.4,-0.4) -- (1.9,-0.4) -- (1.9,-0.3);
\draw (0.35,-0.3) -- (0.35,-0.5) -- (1.95,-0.5) -- (1.95,-0.3);
\end{tikzpicture}
\end{equ}
Since formally $\Wick{(\PPhi^2)^n}\sim N^{n/2}$,  
only the first term and the last term contribute and $\Wick{(\frac1{\sqrt{N}}\PPhi^2)^{n+1}}$ goes to
\begin{align*}
	&a^{n/2}H_n(a^{-1/2}\cQ)\cQ-na a^{(n-1)/2}H_{n-1}(a^{-1/2}\cQ)
	\\&=a^{(n+1)/2}H_n(a^{-1/2}\cQ)a^{-1/2}\cQ-na^{(n+1)/2}H_{n-1}(a^{-1/2}\cQ)
	\\&=a^{(n+1)/2}H_{n+1}(a^{-1/2}\cQ),
\end{align*}
with $a=2C^2(0)$,
where in the last equality we used $H_{n+1}(x)=xH_n(x)-nH_{n-1}(x).$
This formal calculation leads to %understand that equation \eqref{lim:n} represents the $n$-th Wick power of $\cQ$ using the renormalization constant $2C^2(0)$ instead of $G(0)$.
``indicates'' that for any $n\in \mN$,  the limit of $\frac1{N^{n/2}}\Wick{(\PPhi^2)^n}$ is given by:
 \begin{equation}\label{lim:n}
 		\aligned
 		\WickC{\cQ^{n}}=& \lim_{\eps\to0} (2C_\eps^2(0))^{n/2}H_n((2C_\eps^2(0))^{-1/2}\cQ_\eps^n)
\\=	&\sum_{l=0}^{\lfloor n/2\rfloor} (-C^2*G(0))^l\frac{n!}{(n-2l)!l!2^l}\Wick{\cQ^{n-2l}},
\endaligned
\end{equation}
where $C_\eps=C*\rho_\eps$ and $\cQ_\eps$ and  $\rho_\eps$  are as in Proposition \ref{pro:1}. By  Proposition \ref{pro:1} the limit  in \eqref{lim:n} holds in $\bC^{-\kappa}$ $\bP$-a.s..
%From the formal calculation in introduction $\frac{1}{N^{n/2}}\Wick{(\PPhi^2)^n}$ is like $n$-th power of $\frac{1}{N^{1/2}}\Wick{\PPhi^2}$ with  renormalization constant $2C^2(0)$. Hence, we propose that the limit of $\frac{1}{N^{n/2}}\Wick{(\PPhi^2)^n}$ is given by suitably renormalized $n$-th power of $\cQ$ \eqref{lim:n}, i.e. we define the $n$-th Wick power  of $\cQ$ using the renormalization constant $2C^2(0)$ instead of $G(0)$.
The second line of  \eqref{lim:n} follows from the definition of Hermite polynomials \eqref{def:Her} (see also \cite[Lemma 3.4]{RZZ17}).

 To show that the above guess is indeed true, our next step is to find explicit solutions to the recursive relation \eqref{eq:inductive2}.  In other words, we aim to provide an explicit formula for $f_{\mathbf{n},k}$, $\mathbf{n}=(n_1,\dots,n_k)\in \mN^k$ such that $f_{\mathbf{n},k}$ solves the following recursive relation
\begin{equation}\label{recur}
	f_{\mathbf{n},k}(y_1,\dots,y_k)+\int C^2(y_1-z)	f_{\hat{\mathbf{n}},k+1}(z,y_1,\dots, y_k)\dif z=
	\sum_{j=2}^k2n_j C^2(y_1-y_j)f_{\widetilde{\mathbf{n}}_j,k}( y_1,\dots, y_k),
\end{equation}
with $\hat{\mathbf{n}}=(1,n_1-1,n_2,\dots,n_k)$ and $\widetilde{\mathbf{n}}_j=(n_1-1,n_2,\dots,n_{j-1},n_j-1,n_{j+1},\dots,n_k)$.
To this end, we introduce the following notation:
denote by $\mathbf{y}$ the vector with $\sum_{i=1}^kn_i$ coordinates given by
$$
\mathbf{y}=\Big(\underbrace{y_1,\dots,y_1}_{n_1},
y_2,\dots,
\underbrace{y_j,\dots,y_j}_{n_j},\dots,y_{k-1},\underbrace{y_k,\dots,y_k}_{n_k}\Big),
$$
and we write $\mathbf{y}(j)$ for its $j$-th coordinate where $j=1,\cdots, \sum_{i=1}^kn_i$, for instance $\mathbf{y}(1)=y_1$.
For $\mathbf{n}=(n_1,\dots,n_k)\in \mN^k$, $k\in\mN$, define
  \begin{equation}\label{def:F}
  	F_{\mathbf{n},k}(y_1,\dots,y_k)
	\eqdef
	\begin{cases} \sum_\pi\prod_{j=1}^{\sum_{i=1}^kn_i/2}
	 G(\by(\pi(2j-1))-\by(\pi(2j))), &\text{ for }\sum_{i=1}^kn_i \text{ even, }
  	\\0,& \text{ for }\sum_{i=1}^kn_i \text{ odd, }
  	\end{cases}\end{equation}
where
$\pi$ runs over all the pairing permutations of $1,\dots,\sum_{i=1}^kn_i$ 
such that $\by(\pi(2j-1))\neq \by(\pi(2j))$ for $j=1,\dots,\sum_{i=1}^kn_i/2$.
%, where we have $n_i$ points $y_i$, $i=1,\dots,k$. 
In plain words, each $y_j$ can only pair with some $y_i$ with $i\neq j$.

Formally $F_{\mathbf{n},k}(y_1,\dots,y_k)$ denotes 
$$
\E\Big(\prod_{i=1}^k\Wick{\cQ^{n_i}}(y_i)\Big),
$$
and  for $\varphi_i\in \cS(\mT^2)$ with $i=1,\dots,k$,
 $$
 \<F_{\mathbf{n},k}(y_1,\dots,y_k),\varphi^{\otimes k}\>=\E\Big(\prod_{i=1}^k\<\Wick{\cQ^{n_i}},\varphi_i\>\Big).
 $$

\bl 
The functions $f_{\mathbf{n},k}(y_1,\dots,y_k), k\in\mN, \mathbf{n}=(n_1,\dots,n_k)\in \mN^k$  given by
\begin{align}\label{L1}
\sum_{l_1=0}^{\lfloor n_1/2\rfloor}
\cdots
\sum_{l_k=0}^{\lfloor n_k/2\rfloor} (-C^2*G(0))^{\sum_{i=1}^kl_i}\prod_{i=1}^k\frac{n_i!}{(n_i-2l_i)!l_i!2^{l_i}}F_{\mathbf{n}-2\mathbf{l},k}(y_1,\dots,y_k)
\end{align}
satisfies \eqref{recur},
where $\mathbf{n}-2\mathbf{l}=(n_1-2l_1,\dots,n_k-2l_k)$.
\el

Using \eqref{lim:n} and the above remark for $F_{\mathbf{n},k}$,
 it is easy to see that \eqref{L1} is formally the $k$-point function of $\WickC{\cQ^{n_i}}$, i.e.
$$
\E\Big(\prod_{i=1}^k\WickC{\cQ^{n_i}}(y_i)\Big).
$$

\begin{proof}
 We set $L_1=\eqref{L1}$.
Formally we view $F_{\mathbf{n}-2\mathbf{l},k}(y_1,\dots,y_k)$ as
$$\E\Big(\prod_{i=1}^k\Wick{\cQ^{n_i-2l_i}}(y_i)\Big).$$
%The idea of the proof is to verify that when $f_{\mathbf{n},k}(y_1,\dots,y_k), k\in\mN, \mathbf{n}=(n_1,\dots,n_k)\in \mN^k$  given by \eqref{L1}, the LHS of \eqref{recur} equals to the RHS of \eqref{recur}. 

Now we check \eqref{recur}.
The first term on the LHS of \eqref{recur} is 
$f_{\mathbf{n},k}$ which is now denoted to be $L_1$.

Now we consider the second term on the LHS of \eqref{recur}. Using \eqref{L1} with $\mathbf{n},k$ replaced by $\hat{\mathbf{n}},k+1$,
 $f_{\hat{\mathbf{n}},k+1}(z,y_1,\dots,y_k)$ is given by
\begin{equation}\label{eq:fnk+1}
	\aligned
\sum_{l_2=0}^{\lfloor n_2/2\rfloor}
\cdots
\sum_{l_k=0}^{\lfloor n_k/2\rfloor}
\sum_{l_1=0}^{\lfloor(n_1-1)/2\rfloor}
& (-C^2*G(0))^{\sum_{i=1}^kl_i}
\prod_{i=2}^k
\frac{n_i!}{(n_i-2l_i)!l_i!2^{l_i}}\frac{(n_1-1)!}{(n_1-1-2l_1)!l_1!2^{l_1}}
	\\&\times F_{\hat{\mathbf{n}}-2\mathbf{l},k+1}(z,y_1,\dots,y_k),
	\endaligned
\end{equation}
where $\hat{\mathbf{n}}-2\mathbf{l}=(1,n_1-1-2l_1,n_2-2l_2,\dots,n_k-2l_k)$, and formally  $F_{\hat{\mathbf{n}}-2\mathbf{l},k+1}(z,y_1,\dots,y_k)$ equals 
$$\E\Big(\cQ(z)\Wick{\cQ^{n_1-1-2l_1}}(y_1)\prod_{i=2}^k\Wick{\cQ^{n_i-2l_i}}(y_i)\Big).$$
Hence, the term $\int C^2(y_1-z)f_{\hat{\mathbf{n}},k+1}(z,y_1,\dots,y_k)\dif z$ on the LHS of \eqref{recur} can be written as
\begin{equation}\label{zxm:1}
	\aligned
	\sum_{l_2=0}^{\lfloor n_2/2\rfloor}
	\cdots
	\sum_{l_k=0}^{\lfloor n_k/2\rfloor}\sum_{l_1=0}^{\lfloor(n_1-1)/2\rfloor}& (-C^2*G(0))^{\sum_{i=1}^kl_i}\prod_{i=2}^k\frac{n_i!}{(n_i-2l_i)!l_i!2^{l_i}}\frac{(n_1-1)!}{(n_1-1-2l_1)!l_1!2^{l_1}}
	\\&\times\int C^2(y_1-z)F_{\hat{\mathbf{n}}-2\mathbf{l},k+1}(z,y_1,\dots,y_k)\dif z,
	\endaligned
\end{equation}
where the first line is the same constant as in \eqref{eq:fnk+1} and we take convolution w.r.t. $C^2$ for the second line in \eqref{eq:fnk+1}.

Recall the definition of $F_{\hat{\mathbf{n}}-2\mathbf{l},k+1}(z,y_1,\dots,y_k)$ from \eqref{def:F}, where we have different pairs for points
$$
z,\, \underbrace{y_1,\dots,y_1}_{n_1-1-2l_1},y_2,\dots,\underbrace{y_j,\dots,y_j}_{n_j-2l_j},\dots,y_{k-1},\underbrace{y_k,\dots,y_k}_{n_k-2l_k}.$$
Note that we only have one occurrence of $z$. Indeed, considering the point $z$ and its pairing with different points leads to two different cases which we now discuss:

\noindent\textbf{Case 1.} 
Suppose that the point $z$ pairs with $y_1$. Then we have $n_1-1-2l_1$ choices of different copies of $y_1$, which gives an extra factor $n_1-1-2l_1$. In this case the rest of the points pair with each other. Hence, we replace the second line of \eqref{zxm:1} by
\begin{align*}
	&(n_1-1-2l_1)\int C^2(y_1-z)G(z-y_1)\dif z F_{\mathbf{n}-2\mathbf{l}-\mathbf{2}_1,k}(y_1,\dots,y_k)
	\\&=	(n_1-1-2l_1) C^2*G(0) F_{\mathbf{n}-2\mathbf{l}-\mathbf{2}_1,k}(y_1,\dots,y_k).
\end{align*}
where $\mathbf{n}-2\mathbf{l}-\mathbf{2}_1=(n_1-2-2l_1,n_2-2l_2,\dots,n_k-2l_k)$. 
%\hao{Move the definition of $\mathbf{n}-2\mathbf{l}-\mathbf{2}_1$ below to here?} 
We substitute this into \eqref{zxm:1} and get the following expression as the first part of $\int C^2(y_1-z)f_{\hat{\mathbf{n}},k+1}(z,y_1,$ $\dots,y_k)\dif z$
\begin{align*}
	L_2\eqdef\sum_{l_2=0}^{\lfloor n_2/2\rfloor}
	\cdots
	\sum_{l_k=0}^{\lfloor n_k/2\rfloor}\sum_{l_1=0}^{\lfloor(n_1-1)/2\rfloor}& (-C^2*G(0))^{\sum_{i=1}^kl_i}\prod_{i=2}^k\frac{n_i!}{(n_i-2l_i)!l_i!2^{l_i}}\frac{(n_1-1)!}{(n_1-2-2l_1)!l_1!2^{l_1}}
	\\&\times C^2*G(0) F_{\mathbf{n}-2\mathbf{l}-\mathbf{2}_1,k}(y_1,\dots,y_k)
	\1_{\{ l_1\neq \frac{n_1-1}2\}}
	\\=-\sum_{l_2=0}^{\lfloor n_2/2\rfloor}
	\cdots
	\sum_{l_k=0}^{\lfloor n_k/2\rfloor}\sum_{l_1=1}^{\lfloor n_1/2\rfloor}&(-C^2*G(0))^{\sum_{i=1}^kl_i}\prod_{i=2}^k\frac{n_i!}{(n_i-2l_i)!l_i!2^{l_i}}\frac{(n_1-1)!}{(n_1-2l_1)!(l_1-1)!2^{l_1-1}}
	\\&\times F_{\mathbf{n}-2\mathbf{l},k}(y_1,\dots,y_k) ,
\end{align*}
where 
  the indicator function comes from the fact  that for $n_1$ odd and $l_1=(n_1-1)/2$ there is no $\cQ(y_1)$ to pair with $\cQ(z)$. We changed variable by rewriting $l_1+1$ as  $l_1$ in the second step.

\noindent\textbf{Case 2.} Suppose that the point $z$ pairs with $y_j$ for fixed $j\in\{2,\dots,k\}$ we also have $n_j-2l_j$ choices of different copies $y_j$. In this case the remaining points pair with each other. Hence, we replace the second line of \eqref{zxm:1} by
\begin{align*}
	&(n_j-2l_j)\int C^2(y_1-z)G(z-y_j)\dif z F_{\mathbf{n}-2\mathbf{l}-\mathbf{1}_j,k}(y_1,\dots,y_k)
	\\&=	(n_j-2l_j) C^2*G(y_1-y_j) F_{\mathbf{n}-2\mathbf{l}-\mathbf{1}_j,k}(y_1,\dots,y_k),
\end{align*}
where $$\mathbf{n}-2\mathbf{l}-\mathbf{1}_j=(n_1-2l_1-1,n_2-2l_2,\dots,n_{j-1}-2l_{j-1},n_j-2l_j-1,n_{j+1}-2l_{j+1},\dots,n_k-2l_k).$$
We plug this into \eqref{zxm:1} and get the following expression as the second part of $\int C^2(y_1-z)f_{\hat{\mathbf{n}},k+1}(z,y_1,$ $\dots,y_k)\dif z$
\begin{align*}
L_3\eqdef	\sum_{j=2}^k\sum_{\stackrel{l_i=0}{i=2,\dots,k, i\neq j}}^{\lfloor n_i/2\rfloor}\sum_{l_j=0}^{\lfloor(n_j-1)/2\rfloor}\sum_{l_1=0}^{\lfloor(n_1-1)/2\rfloor}&(-C^2*G(0))^{\sum_{i=1}^kl_i}\prod_{i=2,i\neq j}^k\frac{n_i!}{(n_i-2l_i)!l_i!2^{l_i}}
	\\&\times\frac{(n_1-1)!}{(n_1-1-2l_1)!l_1!2^{l_1}}\frac{n_j!}{(n_j-1-2l_j)!l_j!2^{l_j}}
	\\& C^2*G(y_1-y_j)\times F_{\mathbf{n}-2\mathbf{l}-\mathbf{1}_j,k}(y_1,\dots,y_k) ,
\end{align*}
 and
we used the fact that for $n_j$ even and $l_j=n_j/2$ there is no point $y_j$ to pair with the point $z$ and we take sum for $l_j$ from $0$ to $\lfloor(n_j-1)/2\rfloor$. Formally we view $F_{\mathbf{n}-2\mathbf{l}-\mathbf{1}_j,k}(y_1,\dots,y_k)$
as
$$ \E\Big(\Wick{\cQ^{n_1-1-2l_1}}(y_1)\Wick{\cQ^{n_j-1-2l_j}}(y_j)\prod_{i=2,i\neq j}^k\Wick{\cQ^{n_i-2l_i}}(y_i)\Big).$$
Combining the above calculations, we obtain
\begin{align*}
	\text{LHS of }\eqref{recur}=L_1+L_2+L_3.
\end{align*}

We also observe that $L_1$ and $L_2$ involve the same term $F_{\mathbf{n}-2\mathbf{l},k}$, but with different coefficients. Hence, we have
\begin{equation}\label{dec:1}
	\aligned
	L_1+L_2
	=&\sum_{l_2=0}^{\lfloor n_2/2\rfloor}
	\cdots
	\sum_{l_k=0}^{\lfloor n_k/2\rfloor}\sum_{l_1=1}^{\lfloor n_1/2\rfloor}(-C^2*G(0))^{\sum_{i=1}^kl_i}\prod_{i=2}^k\frac{n_i!}{(n_i-2l_i)!l_i!2^{l_i}}
	\\&\times\Big[\frac{n_1!}{(n_1-2l_1)!l_1!2^{l_1}}-\frac{(n_1-1)!}{(n_1-2l_1)!(l_1-1)!2^{l_1-1}}\Big]F_{\mathbf{n}-2\mathbf{l},k}(y_1,\dots,y_k)
	\\&+\sum_{l_2=0}^{\lfloor n_2/2\rfloor}
	\cdots
	\sum_{l_k=0}^{\lfloor n_k/2\rfloor}(-C^2*G(0))^{\sum_{i=2}^kl_i}\prod_{i=2}^k\frac{n_i!}{(n_i-2l_i)!l_i!2^{l_i}}
	 F_{\mathbf{n}-2\mathbf{l},k}(y_1,\dots,y_k) \1_{\{l_1=0\}},
	 \endaligned
\end{equation}
where the last line comes from the term $L_1$ with $l_1=0$, 
recalling that the final expression of $L_2$ does not have the term with $l_1=0$.
Using
\begin{align*}
	\frac{n_1!}{(n_1-2l_1)!l_1!2^{l_1}}-\frac{(n_1-1)!}{(n_1-2l_1)!(l_1-1)!2^{l_1-1}}=	\begin{cases}
	\frac{(n_1-1)!}{(n_1-2l_1-1)!l_1!2^{l_1}},&l_1\leq \lfloor(n_1-1)/2\rfloor\\
	0,&l_1=n_1/2, n_1 \text{ is even},
\end{cases}
\end{align*}
the first term on the RHS of \eqref{dec:1} can be written as
\begin{align}\label{dec:2}
&\sum_{l_2=0}^{\lfloor n_2/2\rfloor}
\cdots
\sum_{l_k=0}^{\lfloor n_k/2\rfloor}\sum_{l_1=1}^{\lfloor(n_1-1)/2\rfloor}(-C^2*G(0))^{\sum_{i=1}^kl_i}\prod_{i=2}^k\frac{n_i!}{(n_i-2l_i)!l_i!2^{l_i}}\frac{(n_1-1)!}{(n_1-2l_1-1)!l_1!2^{l_1}}F_{\mathbf{n}-2\mathbf{l},k}(y_1,\dots,y_k)  .
\end{align}
Note that the second term on the RHS of \eqref{dec:1} precisely corresponds to the case  $l_1=0$ in \eqref{dec:2}. Hence, 
\begin{equs}\label{zxm2}
L_1+L_2&=	\sum_{l_2=0}^{\lfloor n_2/2\rfloor}
\cdots
\sum_{l_k=0}^{\lfloor n_k/2\rfloor}\sum_{l_1=0}^{\lfloor(n_1-1)/2\rfloor}(-C^2*G(0))^{\sum_{i=1}^kl_i}
\\
&\prod_{i=2}^k\frac{n_i!}{(n_i-2l_i)!l_i!2^{l_i}}\frac{(n_1-1)!}{(n_1-2l_1-1)!l_1!2^{l_1}} F_{\mathbf{n}-2\mathbf{l},k}(y_1,\dots,y_k)  .
\end{equs}

Now we consider the RHS of \eqref{recur}. Using $2C^2=C^2*G+G$, we write the RHS of \eqref{recur}
% i.e. $\sum_{j=2}^k2n_j C^2(y_1-y_j)f_{\widetilde{\mathbf{n}}_j,k}( y_1,\dots, y_k)$ 
as
\begin{align}\label{eq:nj}
	\sum_{j=2}^kn_j	G(y_1-y_j)f_{\widetilde{\mathbf{n}}_j,k}( y_1,\dots, y_k)+\sum_{j=2}^kn_j	C^2*G(y_1-y_j)f_{\widetilde{\mathbf{n}}_j,k}( y_1,\dots, y_k)
	\eqdef R_1+R_2.
\end{align}
Also, using \eqref{L1} with $\mathbf{n}$ replaced by $\widetilde{\mathbf{n}}_j$,  $f_{\widetilde{\mathbf{n}}_j,k}( y_1,\dots,y_k)$ is written as
\begin{equation}\label{zxm3}
	\aligned
	\sum_{\stackrel{l_i=0}{i=2,\dots,k,i\neq j}}^{\lfloor n_i/2\rfloor}\sum_{l_1=0}^{\lfloor(n_1-1)/2\rfloor}\sum_{l_j=0}^{\lfloor(n_j-1)/2\rfloor}& (-C^2*G(0))^{\sum_{i=1}^kl_i}\prod_{i=2,i\neq j}^k\frac{n_i!}{(n_i-2l_i)!l_i!2^{l_i}}	\\&\times\frac{(n_1-1)!}{(n_1-1-2l_1)!l_1!2^{l_1}}\frac{(n_j-1)!}{(n_j-1-2l_j)!l_j!2^{l_j}}
\\&\times F_{\mathbf{n}-2\mathbf{l}-\mathbf{1}_j,k}(y_1,\dots,y_k)  .
\endaligned
\end{equation}
We plug \eqref{zxm3} into the first term of \eqref{eq:nj} and have
\begin{align*}
R_1=\sum_{j=2}^k\sum_{\stackrel{l_i=0}{i=2,\dots,k,i\neq j}}^{\lfloor n_i/2\rfloor}\sum_{l_1=0}^{\lfloor(n_1-1)/2\rfloor}\sum_{l_j=0}^{\lfloor(n_j-1)/2\rfloor}& (-C^2*G(0))^{\sum_{i=1}^kl_i}\prod_{i=2,i\neq j}^k\frac{n_i!}{(n_i-2l_i)!l_i!2^{l_i}}	\\&\times\frac{(n_1-1)!}{(n_1-1-2l_1)!l_1!2^{l_1}}\frac{n_j!}{(n_j-1-2l_j)!l_j!2^{l_j}}
	\\G(y_1-y_j)&\times F_{\mathbf{n}-2\mathbf{l}-\mathbf{1}_j,k}(y_1,\dots,y_k) .
\end{align*}
We observe that the only difference between the expression for $f_{\widetilde{\mathbf{n}}_j,k}$ and the previous expression is the additional constant $n_j$ in the second line and the inclusion of $G(y_1-y_j)$ in the third line.

We then compare $R_1$ with $L_1+L_2$ in \eqref{zxm2}. Consider the term $F_{\mathbf{n}-2\mathbf{l},k}(y_1,\dots,y_k)$ in $L_1+L_2$, we fix one $y_1$ and pair it with $y_j$ for fixed $j=2,\dots,k$ and we have $n_j-2l_j$ choices of $y_j$ and the rest points pair with each other. For fixed $j$, we could replace $F_{\mathbf{n}-2\mathbf{l},k}(y_1,\dots,y_k)$ in \eqref{zxm2} by
$$(n_j-2l_j)G(y_1-y_j)\times F_{\mathbf{n}-2\mathbf{l}-\mathbf{1}_j,k}(y_1,\dots,y_k).$$
Hence, we could write $L_1+L_2$ as
\begin{align*}
	\sum_{j=2}^k\sum_{\stackrel{l_i=0}{i=2,\dots,k,i\neq j}}^{\lfloor n_i/2\rfloor}\sum_{l_1=0}^{\lfloor(n_1-1)/2\rfloor}\sum_{l_j=0}^{\lfloor(n_j-1)/2\rfloor}&(-C^2*G(0))^{\sum_{i=1}^kl_i}\prod_{i=2,i\neq j}^k\frac{n_i!}{(n_i-2l_i)!l_i!2^{l_i}}
	\\&\times\frac{(n_1-1)!}{(n_1-1-2l_1)!l_1!2^{l_1}}\frac{n_j!}{(n_j-1-2l_j)!l_j!2^{l_j}}
	\\G(y_1-y_j)&\times F_{\mathbf{n}-2\mathbf{l}-\mathbf{1}_j,k}(y_1,\dots,y_k),
\end{align*}
which is exactly $R_1$. We then have
 $R_1=L_1+L_2$.

For the second term in \eqref{eq:nj} we also plug \eqref{zxm3} into it and have
\begin{align*}
R_2=	\sum_{j=2}^k\sum_{\stackrel{l_i=0}{i=2,\dots,k,i\neq j}}^{\lfloor n_i/2\rfloor}\sum_{l_1=0}^{\lfloor(n_1-1)/2\rfloor}\sum_{l_j=0}^{\lfloor(n_j-1)/2\rfloor}& (-C^2*G(0))^{\sum_{i=1}^kl_i}\prod_{i=2,i\neq j}^k\frac{n_i!}{(n_i-2l_i)!l_i!2^{l_i}}	\\&\times\frac{(n_1-1)!}{(n_1-1-2l_1)!l_1!2^{l_1}}\frac{n_j!}{(n_j-1-2l_j)!l_j!2^{l_j}}
	\\C^2*G(y_1-y_j)&\times F_{\mathbf{n}-2\mathbf{l}-\mathbf{1}_j,k}(y_1,\dots,y_k),
\end{align*}
which is equal to $L_3$. Thus we have $L_1+L_2+L_3=R_1+R_2$, which verifies \eqref{recur}.
\end{proof}

\bp\label{p:u} The solutions to the recursive relation \eqref{eq:inductive2} are unique.
\ep
\begin{proof}
	For $\mathbf{n}=(1,\dots,1)$, $f_{\mathbf{n},k}$ are given in Lemma \ref{lem:2}. Now we consider general $\mathbf{n}=(n_1,\dots,n_k)$ and use \eqref{eq:inductive2} to find $f_{\mathbf{n},k}$ determined by $f_{\hat{\mathbf{n}},k+1}$ and $f_{\widetilde{\mathbf{n}}_j,k}$ with $\hat{\mathbf{n}}=(1,n_1-1,n_2,\dots,n_k)$ and $\widetilde{\mathbf{n}}_j=(n_1-1,n_2,\dots,n_{j-1},n_j-1,n_{j+1},\dots,n_k)$, $j=2,\dots,k$. For  $f_{\hat{\mathbf{n}},k+1}$ it satisfies
	\begin{align*}
		&f_{\hat{\mathbf{n}},k+1}(z,y_1,\dots,y_k)+\int C^2(z-z_1)	f_{\hat{\mathbf{n}},k+1}(z_1,y_1,\dots, y_k)\dif z_1
		\\&=
		\sum_{j=1}^k2n_j C^2(z-y_j)f_{\widetilde{\hat{\mathbf{n}}}_j,k}( y_1,\dots, y_k),
	\end{align*}
where $\widetilde{\hat{\mathbf{n}}}_1=(n_1-2,n_2,\dots,n_k)$ and $\widetilde{\hat{\mathbf{n}}}_j=\widetilde{\mathbf{n}}_j$ for $j\geq2$ and we omit test functions for notation simplicity.
By utilizing Fourier transform, we can observe that $f_{\hat{\mathbf{n}},k+1}$ is determined by $f_{\widetilde{\hat{\mathbf{n}}}j,k}$. This implies that $f_{\mathbf{n},k}$ is determined by $f_{\widetilde{\mathbf{n}}_j,k}$ and $f_{\widetilde{\hat{\mathbf{n}}}_j,k}$.
Comparing $\mathbf{n}$ with $\widetilde{\mathbf{n}}_j$ and $\widetilde{\hat{\mathbf{n}}}$, we can see that $\widetilde{\mathbf{n}}_j$ and $\widetilde{\hat{\mathbf{n}}}$ have fewer points $y_1$ or $y_j$. By repeating these steps, we can continue to reduce the number of points until we arrive at $f_{j}$ with $j\leq k$.
As a result, we conclude that $f_{\mathbf{n},k}$ is determined by $f_{j}$ with $j\leq k$. This proves the claimed uniqueness.
\end{proof}

\bt\label{th:o}  
Let $\mathbf{m}$ be as in Lemma \ref{th:m2}. For $\mathbf{n}=(n_1,\dots,n_m)\in\mN^m, m\in\mN$, $\kappa>0$
$$\Big\{\Big(\frac1{ N^{n_1/2}}\Wick{(\PPhi^2)^{n_1}},\cdots,\frac1{ N^{n_m/2}}\Wick{(\PPhi^2)^{n_m}}\Big)\Big\}_N$$
 converges in law in $(H^{-\kappa})^m$ to
 $$(\WickC{\cQ^{n_1}},\cdots,\WickC{\cQ^{n_m}}),$$
 with
 $$\WickC{\cQ^{n}}\eqdef \sum_{l=0}^{[n/2]} (-C^2*G(0))^{l}\frac{n!}{(n-2l)!l!2^{l}}\Wick{\cQ^{n-2l}},\quad n\in\mN.$$
\et
\begin{proof}
	By Proposition \ref{prop1},
	$$\Big\{\Big(\frac1{ N^{n_1/2}}\Wick{(\PPhi^2)^{n_1}},\cdots,\frac1{ N^{n_m/2}}\Wick{(\PPhi^2)^{n_m}}\Big)\Big\}_N$$
	 is tight in $(H^{-\kappa})^m$ and the $\mathbf{k}$-point correlation functions $\mathbf{k}=(k_1,\cdots,k_m)$  of every tight limit satisfies \eqref{eq:inductive2} with the vector $\mathbf{n}$ replaced by the following vector $$\mathbf{n}^{\mathbf{k}}=(\underbrace{n_1,\cdots,n_1}_{k_1},n_2,\cdots,\underbrace{n_j,\cdots,n_j}_{k_j},\cdots,\underbrace{n_m,\dots,n_m}_{k_m}).$$  By Proposition \ref{p:u} we know that the solutions to \eqref{eq:inductive2} are unique. The unique solution is given by \eqref{L1} with $\mathbf{n}$ replaced by $\mathbf{n}^{\mathbf{k}}$. By Wick's Theorem this is exactly the $\mathbf{k}$-point function of $$(\WickC{\cQ^{n_1}},\cdots,\WickC{\cQ^{n_m}}).$$
Hence, the result follows.
\end{proof}

\section{$1/N$ expansion of the $k$-points functions of $\frac1{\sqrt N}\Wick{\PPhi^2}$}\label{sec:5}

In this section we derive the $1/N$ expansion for $f_{k}^N$, $k\in \mN$, i.e. the $k$-points functions of $\frac1{\sqrt N}\Wick{\PPhi^2}$ defined in \eqref{def:fkN} and give a proof of Theorem \ref{main}.
We will frequently use the recursions obtained in Section~\ref{sec:IBP},
and our induction arguments will rely on graphic notation
to represent various terms arising from the recursions.

Our approach here is similar but more sophisticated than  
\cite{SZZ21} which studied the perturbative expansion
for (single component) $\lambda\Phi^4$ model.
In \cite{SZZ21}, one iteratively applies IBP to correlation functions
in order to either decrease the number of $\Phi$ or 
produce terms of higher orders in $\lambda$, and
this generates the perturbative  expansion (in $\lambda$) such that 
each term is an expression only depending on the Green's function
$C=(\m-\Delta)^{-1}$, except for the remainder.
One can keep track of the structure of the remainder by graphs. 
One then applies SPDE estimates to bound the  remainder 
which requires a procedure to find a spanning tree from the graph.

Here, in order to obtain a $1/N$ expansion, we first analyze the structure of each term obtained from IBP. Specifically, we apply two types of IBP or recursions (Lemma \ref{lem:IBP} and Lemma \ref{lem:32}) to classify the terms into two categories. We then reduce the number of $\PPhi^2$ in each graph by repeating the IBP procedure multiple times.  Next, we analyze the parity (evenness or oddness) of the number of $\PPhi^2$ in each term when performing the $\frac{1}{\sqrt{N}}$ expansion. Each odd graph can be reduced to the product of $\frac1{\sqrt N}\E\Wick{\PPhi^2}$ and a function only depending on $C$. Since $\E\Wick{\PPhi^2}$ is of order $1$, this analysis allows us to prove that all the odd parts are of order $\frac{1}{\sqrt{N}}$ and provides the $1/N$ expansion. 

Our graphic notation here is close to \cite{SZZ21}. We denote $C_\eps$ by a  line. We will also use wavy lines to represent the field $\Phi$. More precisely, single / double / triple wavy lines represent $\Phi_1$, $\frac1{\sqrt N}\Wick{\PPhi^2}$ and $\frac1{\sqrt N}\Wick{\Phi_1\PPhi^2}$ respectively.
With this graphic notation,  for instance,
\eqref{eq:inductive} in Lemma~\ref{lem:32} in the case $k=4$  and $n_1=\cdots =n_4=1$ 
can be  represented by

\begin{equs}[e:graph-4pt]
\begin{tikzpicture}[baseline=-15]
	\node[dot] (x1) at (0,0) {}; %\node at (-0.3,0) {$x_1$};
	\node[dot] (x2) at (1,0) {};
	\node[dot] (x3) at (0,-1) {};
	\node[dot] (x4) at (1,-1) {};
	\draw[Phi] (x1) -- ++(-0.1,-0.25);\draw[Phi] (x1) -- ++(0.1,-0.25);
	\draw[Phi] (x2) -- ++(-0.1,-0.25);\draw[Phi] (x2) -- ++(0.1,-0.25);
	\draw[Phi] (x3) -- ++(-0.1,0.25);\draw[Phi] (x3) -- ++(0.1,0.25);
	\draw[Phi] (x4) -- ++(-0.1,0.25);\draw[Phi] (x4) -- ++(0.1,0.25);
\end{tikzpicture}
\quad& +(1+\frac{2}{N})\;\;
\begin{tikzpicture}[baseline=-15]
	\node[dot] (x1) at (0,0) {};%\node at (-0.3,0) {$x_1$};
	\node[dot] (x2) at (1,0) {};
	\node[dot] (x3) at (0,-1) {};
	\node[dot] (x4) at (1,-1) {};
	\node[dot] (z) at (.5,-.3) {};
	\draw[C,bend left=30] (x1) to (z);\draw[C,bend right=30] (x1) to (z);
	%\draw[Phi] (z) -- ++(0.18,-0.18);\draw[Phi] (z) -- ++(-0.18,-0.18);\draw[Phi] (z) -- ++(0,-0.25);
	\draw[Phi] (z) -- ++(-0.1,-0.25);\draw[Phi] (z) -- ++(0.1,-0.25);
	\draw[Phi] (x2) -- ++(-0.1,-0.25);\draw[Phi] (x2) -- ++(0.1,-0.25);
	\draw[Phi] (x3) -- ++(-0.1,0.25);\draw[Phi] (x3) -- ++(0.1,0.25);
	\draw[Phi] (x4) -- ++(-0.1,0.25);\draw[Phi] (x4) -- ++(0.1,0.25);
\end{tikzpicture}
\\ & =\;
2\;\;\Bigg( \;\;\;
\begin{tikzpicture}[baseline=-15]
	\node[dot] (x1) at (0,0) {}; %\node at (-0.3,0) {$x_1$};
	\node[dot] (x2) at (1,0) {};
	\node[dot] (x3) at (0,-1) {};
	\node[dot] (x4) at (1,-1) {};
	\draw[C,bend left=30] (x1) to (x2);\draw[C,bend right=30] (x1) to (x2);
	\draw[Phi] (x3) -- ++(-0.1,0.25);\draw[Phi] (x3) -- ++(0.1,0.25);
	\draw[Phi] (x4) -- ++(-0.1,0.25);\draw[Phi] (x4) -- ++(0.1,0.25);
\end{tikzpicture}
\quad +\;
\begin{tikzpicture}[baseline=-15]
	\node[dot] (x1) at (0,0) {};%\node at (-0.3,0) {$x_1$};
	\node[dot] (x2) at (1,0) {};
	\node[dot] (x3) at (0,-1) {};
	\node[dot] (x4) at (1,-1) {};
	\draw[C,bend left=30] (x1) to (x3);\draw[C,bend right=30] (x1) to (x3);
	\draw[Phi] (x2) -- ++(-0.1,-0.25);\draw[Phi] (x2) -- ++(0.1,-0.25);
	\draw[Phi] (x4) -- ++(-0.1,0.25);\draw[Phi] (x4) -- ++(0.1,0.25);
\end{tikzpicture}
\quad +\;
\begin{tikzpicture}[baseline=-15]
	\node[dot] (x1) at (0,0) {};%\node at (-0.3,0) {$x_1$};
	\node[dot] (x2) at (1,0) {};
	\node[dot] (x3) at (0,-1) {};
	\node[dot] (x4) at (1,-1) {};
	\draw[C,bend left=20] (x1) to (x4);\draw[C,bend right=20] (x1) to (x4);
	\draw[Phi] (x2) -- ++(-0.1,-0.25);\draw[Phi] (x2) -- ++(0.1,-0.25);
	\draw[Phi] (x3) -- ++(-0.1,0.25);\draw[Phi] (x3) -- ++(0.1,0.25);
\end{tikzpicture}
\;\;\;\Bigg)
+ N^{-1} Q_{N,\mathbf{n}}^\eps
\end{equs}
Here, in the second graph on LHS, the ``new'' vertex (i.e. the vertex in the middle) corresponds to an integration variable over space.
The $Q_{N,\mathbf{n}}^\eps$ terms defined in \eqref{de:Q} can be also represented graphically, for instance
\begin{equ}[e:Q-graph]
Q_{N,\mathbf{n}}^{4,\eps}=\sqrt N\;\;
\begin{tikzpicture}[baseline=-15]
	\node[dot] (x1) at (0,0) {};%\node at (-0.3,0) {$x_1$};
	\node[dot] (x2) at (1,0) {};
	\node[dot] (x3) at (0,-1) {};
	\node[dot] (x4) at (1,-1) {};
	\node[dot] (z1) at (.5,0) {};\node[dot] (z2) at (.4,-.5) {};
	\draw[C,bend left=30] (x1) to (z1);\draw[C,bend right=10] (x1) to (z2);
	\draw[Phi] (z1) -- ++(0.18,-0.18);\draw[Phi] (z1) -- ++(-0.18,-0.18);\draw[Phi] (z1) -- ++(0,-0.25);
	\draw[Phi] (z2) -- ++(0.18,-0.18);\draw[Phi] (z2) -- ++(-0.18,-0.18);\draw[Phi] (z2) -- ++(0,-0.25);
	%\draw[Phi] (z) -- ++(-0.1,-0.25);\draw[Phi] (z) -- ++(0.1,-0.25);
	\draw[Phi] (x2) -- ++(-0.1,-0.25);\draw[Phi] (x2) -- ++(0.1,-0.25);
	\draw[Phi] (x3) -- ++(-0.1,0.25);\draw[Phi] (x3) -- ++(0.1,0.25);
	\draw[Phi] (x4) -- ++(-0.1,0.25);\draw[Phi] (x4) -- ++(0.1,0.25);
\end{tikzpicture}
\end{equ}
Here the factor $\sqrt N$ arises from 
the factor $\frac1{N^2}$ in %\eqref{e:Q1234}
the definition of $Q_{N,\mathbf{n}}^{4,\eps}$
and the $5$ points which carry double or triple wavy lines.

In general,
given a graph $G$, we write $G=(V_G,E_G)$ or simply $G=(V,E)$ where $V$ is the set of vertices
and $E$ is the set of edges.
%We say that $G$ is an admissible graph if $G$ is connected, and
%for any edge $\{a,b\}\in E$, one has $a\neq b$.
We denote by $|V|$, $|E|$ the cardinalities of these sets,
namely the number of vertices and edges. Here for any two distinct vertices $u,v\in V$,
we allow  multiple edges between $u$ and $v$ (namely we allow `multigraphs' in the language of graph theory). However, we will assume
throughout the paper that our graphs do not have self-loops, i.e. there is not any edge of the form $\{u,u\}$ for $u\in V$.
 For every $v\in V_G$ we denote by $\deg(v)$ (called the ``degree'' of $v$) the number of $C_\eps$-lines adjacent to $v$ (we do not count wavy lines attached to $v$).

\begin{definition}\label{def:nPhik}
For each $\ell\ge 0$, $k\in\mN$
		we define $\mathcal H_\ell^k$ to be the set of all the graphs $G=(V,E)$ such that

(1)		$|V|=\ell+k$

(2) there are  $k$ ``special points'' $\{u^*_m,m=1,\dots,k\}$ in $V$
		with $\deg(u^*_m)\in\{0,1,2\}$,
		
(3)  $\deg(v)\in \{1,2,3,4\}$ for every $v\in V \backslash \{u^*_m,m=1,\dots,k\}$ and

(4) there are  two points or no point with odd degree.

We also denote by $\mathcal H_\ell^{k,1}$
 the set of all the graphs satisfying (1)(2)(3) and such that all the points have even degrees, and $\mathcal H_\ell^{k,2}$ the set of all the graphs satisfying (1)(2)(3) and such that only two  points have odd degrees. In particular $\mathcal H_\ell^{k} = \mathcal H_\ell^{k,1} \cup \mathcal H_\ell^{k,2}$.

	 For any such graph we will write
	$V^\partial_G = \{u^*_m,m=1,\dots,k\}$ and
	$V^0_G= V_G \backslash V^\partial_G$.
	
		For $G\in \mathcal H_\ell^k$ we also write
		$$n_{\Phi}(G) := 4\ell+2k - \sum_{v\in V} \deg(v).$$
	
\end{definition}

For instance, all the graphs in  \eqref{e:graph-4pt} belong to
$\mathcal H_\ell^{k,1}$ for $k=4$, where 
$\ell=0$ except for the second graph on the LHS where $\ell=1$.
The graph \eqref{e:Q-graph}  belongs to
$\mathcal H_\ell^{k,2}$ for $k=4$ and $\ell=2$. 
The number $n_{\Phi}(G)$ simply counts
 the number of wavy lines in the graph $G$, and 
 by definition $n_{\Phi}(G)$ is even. 

We define a mapping from $\mathcal H_\ell^k$ to the set of all functions in $\{x_{u^*_m}\}_{m=1}^k$,
which maps
$G\in \mathcal H_\ell^k$ to
\begin{align}\label{def:IGk}
	\mathbf{I}_G (x_1,\dots,x_{k})=\int& \Big(\!\!\!\prod_{\{u,v\}\in E_G}\!\!\!\! C_{\eps}(x_u,x_v)\Big)
	{N^{-\frac12(\sum_{u_m^*\in V_T^{\partial}}\lfloor1-\deg(u_m^*)/2\rfloor+\sum_{z\in V_G^0}\lfloor2-\deg(z)/2\rfloor)}}\,\no
	\\&\E \Big(\prod_{u_m^*\in V^\partial_G}  \Wick{\PPhi_{\eps}^{2-\deg(u_m^*)}}(x_m)
	\prod_{z\in V^0_G} \Wick{\PPhi_{\eps}^{4-\deg(z)}}(x_z)
	\Big)
	\prod_{z\in V^0_G} \dif x_z,
\end{align}
where $x_m=x_{u_m^*}$. Here we write  $\Wick{\PPhi_\eps}=\Phi_{1,\eps}$ and $\Wick{\PPhi_\eps^3}=\Wick{\Phi_{1,\eps}\PPhi_{\eps}^2}$ as shorthand notation.

\br\label{rem} Compared to \cite{SZZ21}, the IBP for observables here is more complicated, and we need to utilize the symmetry property in the IBP. In the following discussion, we only make use of the IBP presented in Lemma \ref{lem:IBP}, Lemma \ref{lem:32}, and Lemma \ref{lem:1}. 
These lemmas provide us with terms in the form of $\mathbf{I}_G$ which involve either two or zero instances of $\Wick{\Phi_1\PPhi^2}$ or $\Phi_1$, while all the other factors consist of $\Wick{\PPhi^2}$ only. 
Specifically, for $G\in \cH^{k,1}$, $\mathbf{I}_G$ refers to a term that solely involves $\Wick{\PPhi^2}$; 
and for $G\in \cH^{k,2}$, $\mathbf{I}_G$ denotes a term that  includes not only $\Wick{\PPhi^2}$ but also two instances of $\Wick{\Phi_1\PPhi^2}$ or $\Phi_1$.
This motivates item (4) in Definition~\ref{def:nPhik}.
\er

Before proceeding, we note that both terms in the LHS of \eqref{eq:fkN} involve $f_{k,\eps}^N$. To handle this part we introduce an operator
\begin{align}\label{def:K}
	Kf\eqdef (I+C^2*\cdot)^{-1}f,\quad f\in L^2(\mT^2),
\end{align}
where $I$ is the identity operator
and its discrete version
\begin{align}\label{def:Ke}
	K_\eps f\eqdef (I+C^2_\eps*\cdot)^{-1}f,\quad f\in L^{2,\eps}(\Lambda_\eps).
\end{align}
  We could write $Kf$ as
\begin{align*}
	Kf(x)=	\int_{\mathbb{R}^2} K(x-y)f(y)\dif y, \quad K(x-y)=\delta(x-y)+L(x-y),\quad x,y\in \mT^2,
\end{align*}
with $L\in L^p(\mathbb{R}^2), p\geq 2$, where we view $f$ as periodic function on $\mR^2$. Similar for the discrete operator $K_\eps$.  These results are proved in Lemma \ref{lem:K} and Lemma \ref{lem:Kd} in Appendix.

 In addition, we introduce edges for the kernel $K$ in the graphs. Let $G = (V_G, E_G) \in \mathcal{H}_\ell^k$, and we construct a new graph $G^e = (V_{G^e}, E_{G^e})$, where $V_{G^e} = V_G \cup V_{G^e}^s$ with $V_{G^e}^s$ representing the set of points ${y_1, \dots, y_q}$, where $q \in \{0, \dots, \ell+k\}$. We choose a set of points $V^* = \{u_1, \dots, u_q\} \subset V_G$ such that each $u_i$ has a full degree. Specifically, for $u_i \in V_G^\partial$, we have $\deg(u_i) = 2$, and for $u_i \in V_G^0$, we have $\deg(u_i) = 4$. The set of edges $E_{G^e}$ is obtained by modifying two edges in $E_G$ that connect to each point $u_i$, replacing them with edges that connect to $y_i$, and adding edges in 
 $E_{G^e}^1 \eqdef \{\{u_i, y_i\} : i = 1, \dots, q\}$. More precisely:
  \begin{itemize}
  	\item For $u_i$ with $\deg u_i=2$, the edges $\{u_i,v_i\}, \{u_i,x_i\}$ for $v_i,x_i\in V_G$ are replaced by $\{u_i,y_i\}, \{y_i,v_i\},$ $\{y_i,x_i\}$;
  	\item For $u_i$ with $\deg u_i=4$, we choose two points $v_i, x_i$ connecting with $u_i$ and the edges $\{u_i,v_i\}$, $\{u_i,x_i\}$ for $v_i,x_i\in V_G$ are replaced by $\{u_i,y_i\}$, $\{y_i,v_i\},$ $\{y_i,x_i\}$.
  \end{itemize}
The following Figure \ref{Fig1} demonstrates the first change. The second one is similar by adding two other lines connecting $u_i$ (see also Figure \ref{Fig2} for an example). 
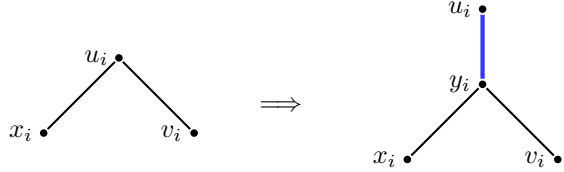
\begin{figure}[h]
	\begin{tikzpicture}[baseline=-20]
		\node[dot] (x1) at (0,0) {};		\node at (-0.3,0) {$u_i$};
		%\node[dot] (y2) at (-1,-2) {};
		%\node[dot] (y3) at (1,-2) {};
		\node[dot] (y1) at (1,-1) {}; \node at (0.7,-1) {$v_i$};
		\node[dot] (y2) at (-1,-1) {};  \node at (-1.3,-1) {$x_i$};
		\draw[C] (x1) to (y1);
	        \draw[C] (x1) to (y2);
		%\draw[Phi] (y1) -- ++(0.18,-0.18);\draw[Phi] (y1) -- ++(-0.18,-0.18);\draw[Phi] (y1) -- ++(0,-0.25);
		%\draw[Phi] (y2) -- ++(0.18,-0.18);\draw[Phi] (y2) -- ++(-0.18,-0.18);\draw[Phi] (y2) -- ++(0,-0.25);	
%		% 添加箭头
%		\draw[->, thick] (1.8,-.5) -- (3.2,-.5);
	\end{tikzpicture}
	$\qquad \Longrightarrow \qquad$
	\begin{tikzpicture}[baseline=-10]
		\node[dot](x0) at (0,1) {};	\node at (-0.3,1) {$u_i$};
		\node[dot] (x1) at (0,0) {};	\node at (-0.3,0) {$y_i$};
		%\node[dot] (y2) at (-1,-2) {};
		%\node[dot] (y3) at (1,-2) {};
		\node[dot] (y1) at (1,-1) {};\node at (0.7,-1) {$v_i$};
		\node[dot] (y2) at (-1,-1) {};\node at (-1.3,-1) {$x_i$};
		\draw[K] (x0) to (x1);
		\draw[C] (x1) to (y1);
		\draw[C] (x1) to (y2);
		%\draw[Phi] (y1) -- ++(0.18,-0.18);\draw[Phi] (y1) -- ++(-0.18,-0.18);\draw[Phi] (y1) -- ++(0,-0.25);
		%\draw[Phi] (y2) -- ++(0.18,-0.18);\draw[Phi] (y2) -- ++(-0.18,-0.18);\draw[Phi] (y2) -- ++(0,-0.25);
	\end{tikzpicture}
	\caption{An illustration for changing from $G$ to $G^e$}
	\label{Fig1}
\end{figure}

The advantage of introducing $K$ can be explained with the example \eqref{e:graph-4pt}. Rewriting \eqref{e:graph-4pt} as 
\begin{equ}
\begin{tikzpicture}[baseline=-15]
	\node[dot] (x1) at (0,0) {}; %\node at (-0.3,0) {$x_1$};
	\node[dot] (x2) at (1,0) {};
	\node[dot] (x3) at (0,-1) {};
	\node[dot] (x4) at (1,-1) {};
	\draw[Phi] (x1) -- ++(-0.1,-0.25);\draw[Phi] (x1) -- ++(0.1,-0.25);
	\draw[Phi] (x2) -- ++(-0.1,-0.25);\draw[Phi] (x2) -- ++(0.1,-0.25);
	\draw[Phi] (x3) -- ++(-0.1,0.25);\draw[Phi] (x3) -- ++(0.1,0.25);
	\draw[Phi] (x4) -- ++(-0.1,0.25);\draw[Phi] (x4) -- ++(0.1,0.25);
\end{tikzpicture}
\quad +\quad
\begin{tikzpicture}[baseline=-15]
	\node[dot] (x1) at (0,0) {};%\node at (-0.3,0) {$x_1$};
	\node[dot] (x2) at (1,0) {};
	\node[dot] (x3) at (0,-1) {};
	\node[dot] (x4) at (1,-1) {};
	\node[dot] (z) at (.5,-.3) {};
	\draw[C,bend left=30] (x1) to (z);\draw[C,bend right=30] (x1) to (z);
	%\draw[Phi] (z) -- ++(0.18,-0.18);\draw[Phi] (z) -- ++(-0.18,-0.18);\draw[Phi] (z) -- ++(0,-0.25);
	\draw[Phi] (z) -- ++(-0.1,-0.25);\draw[Phi] (z) -- ++(0.1,-0.25);
	\draw[Phi] (x2) -- ++(-0.1,-0.25);\draw[Phi] (x2) -- ++(0.1,-0.25);
	\draw[Phi] (x3) -- ++(-0.1,0.25);\draw[Phi] (x3) -- ++(0.1,0.25);
	\draw[Phi] (x4) -- ++(-0.1,0.25);\draw[Phi] (x4) -- ++(0.1,0.25);
\end{tikzpicture}
\quad =\quad
2\;\;\Bigg( \;\;\;
\begin{tikzpicture}[baseline=-15]
	\node[dot] (x1) at (0,0) {}; %\node at (-0.3,0) {$x_1$};
	\node[dot] (x2) at (1,0) {};
	\node[dot] (x3) at (0,-1) {};
	\node[dot] (x4) at (1,-1) {};
	\draw[C,bend left=30] (x1) to (x2);\draw[C,bend right=30] (x1) to (x2);
	\draw[Phi] (x3) -- ++(-0.1,0.25);\draw[Phi] (x3) -- ++(0.1,0.25);
	\draw[Phi] (x4) -- ++(-0.1,0.25);\draw[Phi] (x4) -- ++(0.1,0.25);
\end{tikzpicture}
\quad +\;
\begin{tikzpicture}[baseline=-15]
	\node[dot] (x1) at (0,0) {};%\node at (-0.3,0) {$x_1$};
	\node[dot] (x2) at (1,0) {};
	\node[dot] (x3) at (0,-1) {};
	\node[dot] (x4) at (1,-1) {};
	\draw[C,bend left=30] (x1) to (x3);\draw[C,bend right=30] (x1) to (x3);
	\draw[Phi] (x2) -- ++(-0.1,-0.25);\draw[Phi] (x2) -- ++(0.1,-0.25);
	\draw[Phi] (x4) -- ++(-0.1,0.25);\draw[Phi] (x4) -- ++(0.1,0.25);
\end{tikzpicture}
\quad +\;
\begin{tikzpicture}[baseline=-15]
	\node[dot] (x1) at (0,0) {};%\node at (-0.3,0) {$x_1$};
	\node[dot] (x2) at (1,0) {};
	\node[dot] (x3) at (0,-1) {};
	\node[dot] (x4) at (1,-1) {};
	\draw[C,bend left=20] (x1) to (x4);\draw[C,bend right=20] (x1) to (x4);
	\draw[Phi] (x2) -- ++(-0.1,-0.25);\draw[Phi] (x2) -- ++(0.1,-0.25);
	\draw[Phi] (x3) -- ++(-0.1,0.25);\draw[Phi] (x3) -- ++(0.1,0.25);
\end{tikzpicture}
\;\;\;\Bigg)
\;\; + (\cdots)
\end{equ}
where $(\cdots)$ stands for $O(\frac1{\sqrt N})$ and
$O(\frac1N)$ terms,
we can ``solve'' the first term by applying $K$:
\begin{equ}
\begin{tikzpicture}[baseline=-15]
	\node[dot] (x1) at (0,0) {}; %\node at (-0.3,0) {$x_1$};
	\node[dot] (x2) at (1,0) {};
	\node[dot] (x3) at (0,-1) {};
	\node[dot] (x4) at (1,-1) {};
	\draw[Phi] (x1) -- ++(-0.1,-0.25);\draw[Phi] (x1) -- ++(0.1,-0.25);
	\draw[Phi] (x2) -- ++(-0.1,-0.25);\draw[Phi] (x2) -- ++(0.1,-0.25);
	\draw[Phi] (x3) -- ++(-0.1,0.25);\draw[Phi] (x3) -- ++(0.1,0.25);
	\draw[Phi] (x4) -- ++(-0.1,0.25);\draw[Phi] (x4) -- ++(0.1,0.25);
\end{tikzpicture}
\quad=\quad
2\;\;\Bigg( \;\;\;
\begin{tikzpicture}[baseline=-15]
	\node[dot] (x1) at (0,0) {}; %\node at (-0.3,0) {$x_1$};
	\node[dot] (x2) at (1,0) {};
	\node[dot] (x3) at (0,-1) {};
	\node[dot] (x4) at (1,-1) {};
	\node[dot] (y) at (.5,0) {};
	\draw[K] (x1) to (y);
	\draw[C,bend left=30] (y) to (x2);\draw[C,bend right=30] (y) to (x2);
	\draw[Phi] (x3) -- ++(-0.1,0.25);\draw[Phi] (x3) -- ++(0.1,0.25);
	\draw[Phi] (x4) -- ++(-0.1,0.25);\draw[Phi] (x4) -- ++(0.1,0.25);
\end{tikzpicture}
\quad +\;
\begin{tikzpicture}[baseline=-15]
	\node[dot] (x1) at (0,0) {};%\node at (-0.3,0) {$x_1$};
	\node[dot] (x2) at (1,0) {};
	\node[dot] (x3) at (0,-1) {};
	\node[dot] (x4) at (1,-1) {};
	\node[dot] (y) at (0,-.5) {};
	\draw[K] (x1) to (y);
	\draw[C,bend left=30] (y) to (x3);\draw[C,bend right=30] (y) to (x3);
	\draw[Phi] (x2) -- ++(-0.1,-0.25);\draw[Phi] (x2) -- ++(0.1,-0.25);
	\draw[Phi] (x4) -- ++(-0.1,0.25);\draw[Phi] (x4) -- ++(0.1,0.25);
\end{tikzpicture}
\quad +\;
\begin{tikzpicture}[baseline=-15]
	\node[dot] (x1) at (0,0) {};%\node at (-0.3,0) {$x_1$};
	\node[dot] (x2) at (1,0) {};
	\node[dot] (x3) at (0,-1) {};
	\node[dot] (x4) at (1,-1) {};
	\node[dot] (y) at (.5,-.5) {};
	\draw[K] (x1) to (y);
	\draw[C,bend left=20] (y) to (x4);\draw[C,bend right=20] (y) to (x4);
	\draw[Phi] (x2) -- ++(-0.1,-0.25);\draw[Phi] (x2) -- ++(0.1,-0.25);
	\draw[Phi] (x3) -- ++(-0.1,0.25);\draw[Phi] (x3) -- ++(0.1,0.25);
\end{tikzpicture}
\;\;\;\Bigg)
+ (\cdots).
\end{equ}

We denote the set of such graphs $G^e = (V_{G^e}, E_{G^e})$ as $\mathcal{H}_{\ell,q}^k$. We also set $n_\Phi(G^e) = n_\Phi(G)$. Here, $q$ represents the number of times we apply the operators $K$. Additionally, we define $\mathcal{H}_{\ell,q}^{k,i}$ as the set of graphs $G^e = (V_{G^e}, E_{G^e})$ when $G \in \mathcal{H}_{\ell}^{k,i}$ for $i=1,2$.

     Finally, we define $\mathcal{G}^k_{\ell,q}$ as the set of all graphs $G = (V, E) \in \mathcal{H}^k_{\ell,q}$ such that $n_\Phi(G) = 0$.
 %
% We then write
% $ \mathcal G^k = \cup_{\ell\ge 0} \mathcal G^k_\ell$. Clearly,
% for $G\in \mathcal H_\ell^k$ if we write
% $$n_{\Phi}(G) := 4\ell+2k - \sum_{v\in V} \deg(v),$$
% then
% $ \mathcal G = \{G\in  \mathcal H: n_{\Phi}(G)=0\} \subset \mathcal H$.
  %We also write
 	We then write
$$
\mathcal G^k \eqdef \cup_{\ell\ge 0}\cup_{q\geq 0}^{k+\ell} \mathcal G_{\ell,q}^k,
\quad
\mathcal H^{k,i} \eqdef \cup_{\ell\ge 0}\cup_{q\geq 0}^{k+\ell} \mathcal H_{\ell,q}^{k,i}\;\; (i=1,2),
\quad
\mbox{and}
\quad
\cH^k=\cup_{i=1}^2\mathcal H^{k,i}.
$$

For $G^e\in \cH^{k}_{\ell,q}$ we also define
\begin{align}\label{def:IGkK}
	\mathbf{I}_{G^e} (x_1,\dots,x_{k})=\int& \Big(\!\!\!\prod_{\{u,v\}\in E_{G^e}\backslash E_{G^e}^1}\!\!\!\! C_{\eps}(x_u,x_v)\Big) \Big(\!\!\!\prod_{\{u,y\}\in E_{G^e}^1}\!\!\!\! K_{\eps}(x_u,x_y)\Big) \,\no
	\\&\times{N^{-\frac12\big(\sum_{u_m^*\in V_T^{\partial}}\lfloor1-\deg(u_m^*)/2\rfloor+\sum_{z\in V_G^0}\lfloor2-\deg(z)/2\rfloor\big)}}
	\\
	&\E \Big(\prod_{u_m^*\in V^\partial_G}  \Wick{\PPhi_{\eps}^{2-\deg(u_m^*)}}(x_m)
	\prod_{z\in V^0_G} \Wick{\PPhi_{\eps}^{4-\deg(z)}}(x_z)
	\Big)
	\prod_{z\in V^0_G} \dif x_z\prod_{y\in V^s_G} \dif x_y,\,\no
\end{align}
where $K_\eps$ is the discrete kernel of $(I+C_\eps^2*)^{-1}$. We note that for $G\in \cG^k$, $\mathbf{I}_G$ is independent of $N$ and $\Phi$. 

In the following lemma we  prove a $1/\sqrt N$ expansion for $f_{k,\eps}^N$.
Strictly speaking we will not directly use this lemma
and we will eventually be interested in the $1/N$ expansion,
but the proof of this lemma serves as important start-point for the later proofs.

\begin{lemma}\label{lem:FRk}
	For any $k \ge 1$ and $p\geq-1$
	we have the following representation for the $k$-point correlation
	\begin{equ}[e:S-exp-DSk]
		f^N_{k,\eps}  = \sum_{n=0}^p  \frac{1}{N^{n/2}} F_{n,\eps}^k +  \frac{1}{N^{(p+1)/2}} R_{p+1,\eps}^k,
	\end{equ}
	where  the graphs associated with $F_{n,\eps}^k$ belong to $\cG^k$,
	and   the graphs associated with $ R_{p+1,\eps}^k$ belong to $\cH^k$.
	The functions
	$F_{n,\eps}^k $  are independent of $N$.
\end{lemma}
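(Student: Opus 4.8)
The plan is to prove \eqref{e:S-exp-DSk} by induction on $p\ge -1$, reading every quantity graphically and treating the recursions of Section~\ref{sec:IBP} as graph-rewriting rules. For the base case $p=-1$ the sum is empty, and $f^N_{k,\eps}=\mathbf I_{G}$ for the graph $G$ consisting of $k$ isolated special vertices of degree $0$ (each carrying $\Wick{\PPhi^2_\eps}$) with $\ell=q=0$: the normalization exponent in \eqref{def:IGkK} equals $-k/2$, so $\mathbf I_G=N^{-k/2}\E(\prod_m\Wick{\PPhi^2_\eps}(x_m))=f^N_{k,\eps}$. Thus $R_{0,\eps}^k=f^N_{k,\eps}$, whose graph lies in $\cH^{k,1}\subset\cH^{k}$ with $n_\Phi=2k$.

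For the inductive step I would take the level-$p$ identity and process the remainder $R_{p+1,\eps}^k=\sum_{G^e}c_{G^e}\mathbf I_{G^e}$, $G^e\in\cH^{k}$, graph by graph. If $n_\Phi(G^e)=0$ then $G^e\in\cG^{k}$, so $\mathbf I_{G^e}$ is independent of $N$ and $\tfrac1{N^{(p+1)/2}}c_{G^e}\mathbf I_{G^e}$ is absorbed into $F_{p+1,\eps}^k$. If $n_\Phi(G^e)>0$ I would apply integration by parts at a vertex carrying a nontrivial Wick power: Lemma~\ref{lem:32} when that vertex carries an even power $\Wick{(\PPhi^2_\eps)^{n}}$, and Lemma~\ref{lem:IBP} to first rewrite the two odd factors $\Wick{\Phi_{1,\eps}(\PPhi^2_\eps)^{m}}$ of an $\cH^{k,2}$ graph in terms of $O(N)$-invariant ones. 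Graphically, as in \eqref{e:graph-4pt}, this produces a relation of the form $\mathbf I_{G^e}+C^2_\eps*\mathbf I_{G^e}=\sum(\text{reduced graphs})+(\text{error})$, where in each reduced graph the active vertex is joined by a double $C_\eps$-line to a second wavy vertex and the error consists of the $Q^\eps$ terms of \eqref{de:Q}; by \eqref{e:Q-graph} and Remark~\ref{rem} each $Q^\eps$ term carries an explicit factor $N^{-1/2}$ and introduces two new odd-degree vertices, hence lands in $\cH^{k,2}$. Applying $K_\eps=(I+C^2_\eps*)^{-1}$ (Lemma~\ref{lem:Kd}) solves for $\mathbf I_{G^e}$ and attaches the $K$-edge of Figure~\ref{Fig1}, raising $q$ by one while keeping the graph in $\cH^{k}$. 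The reduced graphs keep the order $N^{-(p+1)/2}$ and are fed back into the inner reduction, whereas every error, together with the $O(1/N)$ discrepancy coming from the factor $\tfrac{N+2n}{N}$ rather than $1$, is demoted to order $N^{-(p+2)/2}$ and collected into $R_{p+2,\eps}^k$.

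The crux is to make this a genuine asymptotic expansion rather than an infinite regress, and I would organize the argument around the monovariant $n_\Phi$, a nonnegative even integer. Each $K_\eps$-solving step strictly lowers $n_\Phi$ by contracting two wavy vertices against a double $C_\eps$-line (thereby turning them into full-degree vertices), while the integration vertex created by the $K_\eps$-edge carries no Wick factor; hence within a fixed order the inner loop terminates at $n_\Phi=0$, that is at $\cG^{k}$ graphs that feed $F_{p+1,\eps}^k$. Since each integration by parts yields finitely many terms and the inner loop is finite, only finitely many graphs appear at each order and the construction closes. The genuine subtlety is the parity leftover: an $\cH^{k,1}$ chain can reduce to a single surviving factor $\Wick{\PPhi^2_\eps}$ ($n_\Phi=2$) with no second invariant vertex available for contraction, which is precisely the $k=1$ situation. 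There the only move is the self-consistent identity \eqref{Phi2} established in the proof of Lemma~\ref{lem:1} (the $k=1$ instance of Lemma~\ref{lem:32}), which produces no same-order $\cG^{k}$ contribution and instead deposits the whole residue into the next order as an $\cH^{k,2}$ graph; this is the graphical reason that odd configurations never contribute to the leading $\cG^{k}$ terms and are systematically pushed down by a factor $N^{-1/2}$. Throughout I would invoke the discrete uniform moment bounds \eqref{uni:d1}--\eqref{uni:d2} to ensure each $\mathbf I_{G^e}=O(1)$ and that all identities hold uniformly in $\eps$ and $N$, and I would verify that the vertices created by integration by parts keep every degree within the admissible ranges of Definition~\ref{def:nPhik} and preserve the normalization exponent of \eqref{def:IGkK}.
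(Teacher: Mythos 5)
Your proposal is correct and follows essentially the same route as the paper: induction on $p$ with the trivial base case $R^k_{0,\eps}=f^N_{k,\eps}$, reduction of each remainder graph via Lemma~\ref{lem:32} (resp.\ Lemma~\ref{lem:IBP} for the $\cH^{k,2}$ graphs), inversion by $K_\eps$, termination of the inner loop by the strict decrease of $n_\Phi$, and disposal of the $n_\Phi=2$ leftover through the identity \eqref{Phi2}. The only bookkeeping point you leave implicit, which the paper handles by allowing the coefficients $r_G$ to contain $1/\sqrt N$ parts and merging them into the next-order remainder, is that a $\cG^k$ graph arriving with an $N$-dependent prefactor contributes only its $N$-independent part to $F^k_{p+1,\eps}$.
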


We remark that in the lemma, when $p=-1$,
in which case the `empty' sum in  \eqref{e:S-exp-DSk}
is understood as $0$ by standard convention,
\eqref{e:S-exp-DSk}  trivially holds
which states that  $f^N_{k,\eps} = R_{0,\eps}^k$
where $R_{0,\eps}^k$ can be indeed associated with
a graph in $\cH_0^k$, that is, the graph with only $k$ vertices and no edge.

The proof requires Lemma \ref{lem:IBP} which we present using graphs.
For each $G\in \cH_{\ell,q}^{k,2}$, there are two vertices, denoted by $v_1,v_2$, whose degrees are odd, see 
\eqref{e:Q-graph} for an example. We define a map $\sigma: \cH_{\ell,q}^{k,2} \to \cH_{\ell,q}^{k,1}$ by
\begin{align}\label{sigma}
V_{\sigma (G)} \eqdef V_G,\qquad
E_{\sigma (G)} \eqdef E_G \sqcup \{v_1,v_2\}.
\end{align}
It is easy to see that $n_\Phi(\sigma(G))=n_\Phi(G)-2$. In other words
 for $G\in \cH^{k,2}$, $\mathbf{I}_G$ only involves two $\Wick{\Phi_1\PPhi^2}$ or $\Phi_1$, and 
 $\mathbf{I}_{\sigma(G)}$ means connecting these two vertices by the kernel $C$ and replacing $\Wick{\Phi_1\PPhi^2}$ or $\Phi_1$ by $\Wick{\PPhi^2}$ or $1$, respectively. For instance
the graph \eqref{e:Q-graph} under the map $\sigma$ becomes
 \begin{equ}
\sqrt N\;\;
\begin{tikzpicture}[baseline=-15,scale=1.2]
	\node[dot] (x1) at (0,0) {};%\node at (-0.3,0) {$x_1$};
	\node[dot] (x2) at (1,0) {};
	\node[dot] (x3) at (0,-1) {};
	\node[dot] (x4) at (1,-1) {};
	\node[dot] (z1) at (.5,0) {};\node[dot] (z2) at (.4,-.5) {};
	\draw[C,bend left=30] (x1) to (z1);\draw[C,bend right=10] (x1) to (z2);
	\draw[C,bend right=30] (z1) to (z2);
	\draw[Phi] (z1) -- ++(0.18,-0.18);\draw[Phi] (z1) -- ++(0,-0.25);
	\draw[Phi] (z2) -- ++(-0.1,-0.25);\draw[Phi] (z2) -- ++(0.1,-0.25);
	%\draw[Phi] (z) -- ++(-0.1,-0.25);\draw[Phi] (z) -- ++(0.1,-0.25);
	\draw[Phi] (x2) -- ++(-0.1,-0.25);\draw[Phi] (x2) -- ++(0.1,-0.25);
	\draw[Phi] (x3) -- ++(-0.1,0.25);\draw[Phi] (x3) -- ++(0.1,0.25);
	\draw[Phi] (x4) -- ++(-0.1,0.25);\draw[Phi] (x4) -- ++(0.1,0.25);
\end{tikzpicture}
\end{equ}

\begin{proof}[Proof of Lemma \ref{lem:FRk}]
We omit $\eps$ in the proof.
	Assume that for a fixed integer $p\ge 0$
	we have already shown that
	\begin{equ}[e:RNk11]
		f_{k}^N  = \sum_{n=0}^{p-1}  \frac1{N^{n/2}} F_{n}^k +  \frac1{N^{p/2}}R_{p}^k,
		\qquad\qquad
		R_{p}^k = \!\!\!\!\!\! \sum_{\substack{G\in \mathcal H^k \\ n_\Phi(G) \in [0,m]\cap 2\mathbb{Z}}}
		\!\!\!\!\!\!\!\! r_G \mathbf{I}_G
	\end{equ}
	for  some $m \in 2\mathbb{Z}$ which may depend on $p$,
and some coefficients $r_G\in \R$ and $r_G$ may involve $\frac1{\sqrt N}$.
	We then prove that the same holds with $p$ replaced by $p+1$, with updated values of $m$ and $r_G$.
	
Now we consider for each $G\in \cH_{\ell,q}^{k,1}$ such that $n_\Phi(G)=m$. Using Lemma \ref{lem:32} the term $ \mathbf{I}_G $ can be written as
\begin{equation}\label{eq:H1k}
	\aligned
	\mathbf{I}_G +C^2*\mathbf{I}_G =&\sum_{\substack{G'\in \cH^{k,1}_{\ell,q}\\ n_\Phi(G')=m-4}}a_{G'}\mathbf{I}_{G'}
	+\frac1{\sqrt N}\Big(\sum_{\substack{G''\in \cH^{k,2}_{\ell,q}\\ n_\Phi(G'')=m-4}}a_{G''}\mathbf{I}_{G''}+\sum_{\substack{G''\in \cH^{k,2}_{\ell+1,q}\\ n_\Phi(G'')=m}}a_{G''}\mathbf{I}_{G''}
	\\&+\sum_{\substack{G''\in \cH^{k,2}_{\ell+2,q}\\ n_\Phi(G'')=m+4}}a_{G''}\mathbf{I}_{G''}\Big)+\frac1N\sum_{\substack{G''\in \cH^{k,1}\\n_\Phi(G'')=m }} a_{G''}\mathbf{I}_{G''},
	\endaligned
\end{equation}
for some coefficients $a_{G'}, a_{G''}\in \mR$ independent of $N$, where $C^2*$ means the convolution of one $\Wick{\PPhi^2}$ with the kernel $C^2$.
Here,  the terms with coefficients $1/\sqrt N$ correspond to the terms in $Q_{N,\mathbf{n}}^\eps$  in Lemma \ref{lem:32}.
Additionally, we express $\frac{N+2}{N}$ in \eqref{eq:inductive} as $1+\frac{2}{N}$, and the terms with coefficients $1/N$ in \eqref{eq:H1k} are derived from the part in \eqref{eq:inductive} with coefficients $\frac{2}{N}$.

Applying the kernel $K$ on both sides of \eqref{eq:H1k}, we then have for $G\in \cH^{k,1}_{\ell,q}$ with $n_\Phi(G)=m$
\begin{equation}\label{e:IF1}
	\aligned
	\mathbf{I}_G  =&\sum_{\substack{G'\in \cH^{k,1}_{\ell,q+1}\\ n_\Phi(G')=m-4}}a_{G'}\mathbf{I}_{G'}
	+\frac1{\sqrt N}\Big(\sum_{\substack{G'\in \cH^{k,2}_{\ell,q+1}\\ n_\Phi(G')=m-4}}a_{G'}\mathbf{I}_{G'}+\sum_{\substack{G''\in H^{k,2}_{\ell+1,q+1} \\n_\Phi(G'')=m}}a_{G''}\mathbf{I}_{G''}
	\\&+\sum_{\substack{G''\in \cH^{k,2}_{\ell+2,q+1}\\ n_\Phi(G'')=m+4}}a_{G''}\mathbf{I}_{G''}\Big)+\frac1N\sum_{G''\in \cH^{k}} a_{G''}\mathbf{I}_{G''},
	\endaligned
\end{equation}
where we simply changed each graph from $\cH^{k,i}_{\cdot,q}, i=1,2,$ to a graph in $\cH^{k,i}_{\cdot,q+1}$, $i=1,2$. When we apply the operator $K$ to both $\mathbf{I}_{G'}$ and $\mathbf{I}_{G''}$, we select the same variable as used in the convolution with $C^2$. In the case where the degree of the point corresponding to this variable is $4$, there are a total of $4$ instances of this variable. However, we only perform the convolution with $K$ on two of these $4$ variables, which are the points from the term $\Wick{\PPhi^2}$.

%Substituting \eqref{eq:H2k} with $m$ replaced by $m-4, m$ and $m+4$, respectively,  into \eqref{eq:H1k} for $\mathbf{I}_G$ with $G\in \cH^{k,2}$, we get for $G\in \cH^{k,1}_{\ell,q}$ with $n_\Phi(G)=m$
%\begin{align*}
%	\mathbf{I}_G +C^2*\mathbf{I}_G =&\sum_{\substack{G'\in \cH^{k,1}_{\ell,q}\\ n_\Phi(G')=m-4}}a_{G'}\mathbf{I}_{G'}
%	+\frac1{\sqrt N}\Big(\sum_{\substack{G'\in \cH^{k,1}_{\ell,q} \\n_\Phi(G')=m-6}}a_{G'}\mathbf{I}_{G'}+\sum_{\substack{G''\in H^{k,1}_{\ell+1,q} \\n_\Phi(G'')=m-2}}a_{G''}\mathbf{I}_{G''}
%	\\&+\sum_{\substack{G''\in \cH^{k,2}_{\ell+2,q}\\ n_\Phi(G'')=m+2}}a_{G''}\mathbf{I}_{G''}\Big)+\frac1N\sum_{G''\in \cH^{k}} a_{G''}\mathbf{I}_{G''},
%\end{align*}
%with updated constants $a_{G'}, a_{G''}$ and updated $\mathbf{I}_{G'}$ and $\mathbf{I}_{G''}$.
%Here we could find the terms of order $\frac1{\sqrt N}$ add or decrease odd numbers $\PPhi^2$.

We note that the first term on the RHS of \eqref{e:IF1} has smaller values of $n_\Phi$.  For $m=4i$ with $i\in\mN$, applying the above procedure $i$ times for the first term on the RHS of \eqref{e:IF1}, we get for $G\in \cH^{k,1}_{\ell,q}$ with $n_\Phi(G)=m$
\begin{align}\label{eq:IG1}
	\mathbf{I}_G  =&\sum_{G'\in \cG^{k}_{\ell,q+i}}a_{G'}\mathbf{I}_{G'}
+\frac1{\sqrt N}\sum_{G''\in \cH^{k}} r_{G''}\mathbf{I}_{G''},
\end{align}
where $a_{G'}$ are independent of $N$ and
we merge the terms with coefficients $\frac1{\sqrt N}$ and $\frac1N$, and $r_{G''}$ may depend on $\frac1{\sqrt N}$.
\footnote{Here and in the sequel, when we say that $r$ may depend on $\frac1{\sqrt N}$, we mean $r=\bar r +\frac1{\sqrt N} \tilde r$ where $\bar r$ and $\tilde r$ are independent of $N$. Statements such as ``depend on $\frac1N$'' etc are understood in similar way.}
Note that $G'\in \cG^{k}_{\ell,q+i}$ here implies that 
$\mathbf{I}_{G'}$ does not involve $\Phi$ and only depends on $C$.
For $m=4i+2$ applying the above procedure $i$ times, we get for $G\in \cH^{k,1}_{\ell,q}$
\begin{equ}
	\mathbf{I}_G  =\Big(\sum_{G'\in \cG^{k}_{\ell-1,q+i}}a_{G'}\mathbf{I}_{G'}+\sum_{G'\in \cG^{k-1}_{\ell,q+i}}a_{G'}\mathbf{I}_{G'}\Big)\frac1{\sqrt N}\E(\Wick{\PPhi^2})
	+\frac1{\sqrt N}\sum_{G''\in \cH^{k}} r_{G''}\mathbf{I}_{G''}
\end{equ}
where $a_{G'}$ are independent of $N$ and  $r_{G''}$ may depend on $1/\sqrt N$.
Using \eqref{Phi2} we have
  $$
  	\E(\Wick{\PPhi^2})=\mathbf{I}_{G'}+\frac1Na'\E(\Wick{\PPhi^2}),
  $$
with  $G'\in \cH^{1}$ and some constant $a'\in\mR$ independent of $N$. Then we can write
\begin{equ}\label{eq:IG2}
	\mathbf{I}_G  =\frac1{\sqrt N}\sum_{G''\in \cH^{k}} r_{G''}\mathbf{I}_{G''},
\end{equ}
with updated $r_{G''}$ and $G''$, where $r_{G''}$ may depend on $\frac1{\sqrt N}$ and $\frac1N$.

For each $G\in \cH_{\ell,q}^{k,2}$ with $n_\Phi(G)=m\in\mN$, using Lemma \ref{lem:IBP} we can write $ \mathbf{I}_G $  as 
\begin{equ}\label{eq:H2k}
	\mathbf{I}_G
	=\mathbf{I}_{\sigma(G)}
	+\frac1{\sqrt N}\Big(\sum_{\substack{G''\in \cH^{k,2}_{\ell,q} \\n_\Phi(G'')=m-2}}a_{G''}\mathbf{I}_{G''}+\sum_{\substack{G''\in \cH^{k,2}_{\ell+1,q}\\ n_\Phi(G'')=m+2}}a_{G''}\mathbf{I}_{G''}
	\Big)+\frac1N a_{\sigma(G)}\mathbf{I}_{\sigma(G)},
\end{equ}
with $\sigma(G)\in \cH^{k,1}_{\ell,q}$ defined in \eqref{sigma} and $a_{\sigma(G)}=2$ or $0$. 
By substituting the RHS of  \eqref{eq:IG1} and \eqref{eq:IG2} into the first term of  \eqref{eq:H2k}, we observe that equations \eqref{eq:IG1} and \eqref{eq:IG2} also remain valid for $G \in \cH^{k,2}_{\ell,q}$.
Substituting \eqref{eq:IG1} and \eqref{eq:IG2} into $R_p^k$ in \eqref{e:RNk11} and recalling again that 
for any $G\in \cG^k$ the corresponding 
$\mathbf{I}_G$ is independent of $N$ and $\Phi$, the result follows.
\end{proof}

To obtain the $\frac{1}{N}$ expansion, we need to delve deeper into IBP and analyze the parity (oddness or evenness) of the number of $\PPhi^2$ terms present in $\mathbf{I}_G$. 
To this end we denote by $n_G$ the number of $\PPhi^2$ factors in $\mathbf{I}_G$.
We also view $\Phi_1\PPhi^2$ as involving one $\PPhi^2$ term. 
It is straightforward to observe that $n_G$ can be equivalently defined as
\begin{equs}
n_G&=\frac{1}{2}n_\Phi(G),\qquad\qquad\quad \mbox{for }G\in \cH^{k,1} \quad \mbox{(i.e. $\mathbf{I}_G$ has  no $\Wick{\Phi_1\PPhi^2}$)},
\\
n_G&=\frac{1}{2}(n_\Phi(G)-2), \qquad \mbox{for }G\in \cH^{k,2}  \quad \mbox{(i.e. $\mathbf{I}_G$ has  two $\Wick{\Phi_1\PPhi^2}$ or $\Phi_1$)}.
\end{equs}

In the following proof, we will omit the subscript in $\cH^{k,i}$ and focus more on the parity % (oddness or evenness)
 of $n_G$. In the sequel we also use $a_{G'}, a_{G''}$ as suitable constants independent of $N$, which may change from line to line.

\bl  \label{lem:zq1}
For $G\in \cH^{k}, k\in\mN,$ with $n_G=m\in\mN$, it holds that for every $n\in\mN$
\begin{equation}\label{zq:1}
	\aligned
	\mathbf{I}_G =&\sum_{\substack{ G'\in \cH^{k,1} \\n_{G'}=m-2}}a_{G'}\mathbf{I}_{G'}
	+\sum_{i=1}^{n+1}\frac1{ N^{i/2}}\sum_{\substack{ G''\in \cH^{k} \\n_{G''}\in 2\mZ+m-i}}a_{G''}\mathbf{I}_{G''},
	%+\frac1{ N^{n/2}}\sum_{\substack{ G''\in \cH^{k,2}\\ n_{G''}\in 2\mZ+m-n}}a_{G''}\mathbf{I}_{G''},
	\endaligned
\end{equation}
for suitable $a_{G'}, a_{G''}\in \mR$ independent of $N$.
Here all the sums are over  finitely many terms since $k$ is fixed and at each vertex the degree plus the number of wavy lines is not allowed to exceed $4$.
\el

Remark that although the above lemma is proved for 
arbitrary $n$, we will only need $n=1,2$ later.

\begin{proof}
	We first prove that for $G\in \cH^{k,1}$ with $n_G=m$ 
	\begin{equ}\label{zq}
		\mathbf{I}_G +C^2*\mathbf{I}_G 
		=\sum_{\substack{G'\in \cH^{k,1}\\ n_{G'}=m-2}}
			a_{G'}\mathbf{I}_{G'}
		+\sum_{i=1}^{n+1}\frac1{ N^{i/2}}\!\!\!\!\!\sum_{\substack{G''\in \cH^{k,1} \\n_{G''}\in2\mZ+m-i}} \!\!\!\!\!\! a_{G''}\mathbf{I}_{G''}
		+\frac1{ N^{n/2}}\!\!\!\!\!\! \sum_{\substack{G''\in \cH^{k,2}\\ n_{G''}\in2\mZ+m-n}} \!\!\!\!\!\! a_{G''}\mathbf{I}_{G''}.
	\end{equ}
	%where $I_i, i=1,\dots,q$ and $J_{q/2}$ are some finite sets and may change from line to line.
	We write \eqref{eq:H1k} for $G\in \cH^{k,1}$ with $n_G=m$ as
\begin{equation}\label{z:1}
	\aligned
		\mathbf{I}_G +C^2*\mathbf{I}_G
		=&\sum_{\substack{G'\in \cH^{k,1} \\n_{G'}=m-2}}a_{G'}\mathbf{I}_{G'}
	+\frac1{\sqrt N}\sum_{j=-1,1,3}\sum_{\substack{G''\in \cH^{k,2}\\ n_{G''}=m-j}}a_{G''}\mathbf{I}_{G''}\\&+\frac1N\sum_{\substack{G''\in \cH^{k,1}\\n_{G''}=m}} a_{G''}\mathbf{I}_{G''}.
	\endaligned
\end{equation}
Hence,  \eqref{zq} holds with $n=1$. In the following, we suppose that \eqref{zq} holds and prove that it holds with $n$ replaced by $n+1$.

We write \eqref{eq:H2k} for $G\in \cH^{k,2}$ with $n_G=m$
\begin{align}\label{z:2}
	&\mathbf{I}_G
	=\mathbf{I}_{\sigma(G)}
	+\frac1{\sqrt N}\sum_{j=-1,1}\sum_{\substack{G''\in \cH^{k,2},\\ n_{G''}=m-j}}a_{G''}\mathbf{I}_{G''}+\frac1Na_{\sigma(G)}\mathbf{I}_{\sigma(G)}.
\end{align}
From  \eqref{z:1} and \eqref{z:2}, we observe the following patterns:
For the order $\frac{1}{\sqrt{N}}$  terms, when we perform IBP we either add or eliminate an odd number of $\PPhi^2$ factors in $\mathbf{I}_{G'}$. 
%This means that the parity of $n_G$ (the number of $\PPhi^2$ terms in $\mathbf{I}_G$) changes by an odd number.
This changes the parity of $n_G$.
On the other hand, for the order $\frac{1}{N}$  term, when we perform IBP we either add or eliminate an even number of $\PPhi^2$ factors in $\mathbf{I}_{G'}$. %This implies that the parity of $n_G$ changes by an even number.
This does not change  the parity of $n_G$.

These observations show a relation between the order of the term and the change in the parity of $n_G$, indicating how the number of $\PPhi^2$ terms is affected during the expansion.

Using \eqref{z:2}, we write the last term in \eqref{zq}  as
\begin{equ}
\frac1{ N^{n/2}} \!\!\!\!\!\! \sum_{\substack{G''\in \cH^{k,1}\\ n_{G''}\in 2\mZ+m-n}} \!\!\!\!\!\! a_{G''}\mathbf{I}_{G''}
+\frac1{ N^{(n+1)/2}} \!\!\!\!\!\!\!\! \sum_{\substack{G''\in \cH^{k,2}\\ n_{G''}\in2\mZ+m-n-1}} \!\!\!\!\!\!\!\! a_{G''}\mathbf{I}_{G''}
+\frac1{ N^{(n+2)/2}}\!\!\!\!\!\!\!\! \sum_{\substack{G''\in \cH^{k,1}\\ n_{G''}\in 2\mZ+m-2-n}} \!\!\!\!\!\!\!\! a_{G''}\mathbf{I}_{G''},
\end{equ}
where we take sum for finite terms.
Using this to replace the last terms in \eqref{zq}, 
we obtain  \eqref{zq} with $n$ replaced by $n+1$.
Applying the operator $K$ on both sides of \eqref{zq} and noting $\cH^k=\cH^{k,1}\cup \cH^{k,2}$, 
we obtain \eqref{zq:1}  for $G\in \cH^{k,1}$.

 For $G\in \cH^{k,2}$ with $n_G=m$ we use \eqref{z:2} and similar induction as above to have
\begin{equ}
	\mathbf{I}_G =\sum_{\substack{G'\in \cH^{k,1}\\ n_{G'}=m}}a_{G'}\mathbf{I}_{G'}
	+\sum_{i=1}^{n+1}\frac1{ N^{i/2}} \!\!\!\!\!\! \sum_{\substack{G''\in \cH^{k,1}\\ n_{G''}\in 2\mZ+m-i}} \!\!\!\!\!\! a_{G''}\mathbf{I}_{G''}
	+\frac1{ N^{n/2}} \!\!\!\!\!\! \sum_{\substack{G''\in \cH^{k,2}\\ n_{G''}\in 2\mZ+m-n}} \!\!\!\!\!\! a_{G''}\mathbf{I}_{G''}.
\end{equ}
Applying \eqref{zq:1} to the $\mathbf{I}_{G'}$ in the first term of the RHS, we can lower the value of $n_{G'}$ and we obtain \eqref{zq:1} for $G\in \cH^{k,2}$.
%
% \eqref{z:2} for the term in $\cH^{k,2}$ we find
%\begin{align*}
%	&\mathbf{I}_G +C^2*\mathbf{I}_G
%	\\=&\sum_{G'\in \cH^{k,1}, n_{G'}=k-2}a_{G'}\mathbf{I}_{G'}
%	+\frac1{\sqrt N}\sum_{j\in I_{1/2}\subset 2\mZ+1}\sum_{G''\in \cH^{k,1}, n_{G''}=k-j}a_{G''}\mathbf{I}_{G''}
%	\\&+\frac1N\sum_{j\in I_{1}\subset 2\mZ}\sum_{G''\in \cH^{k,1},n_{G''}=k-j} a_{G''}\mathbf{I}_{G''}	+\frac1{N^{3/2}}\sum_{j\in I_{3/2,1}\subset 2\mZ+1}\sum_{G''\in \cH^{k,1}, n_{G''}=k-j}a_{G''}\mathbf{I}_{G''}
%	\\&+\frac1{N^{3/2}}\sum_{j\in I_{3/2,2}\subset 2\mZ+1}\sum_{G''\in \cH^{k,2}, n_{G''}=k-j}a_{G''}\mathbf{I}_{G''}.
%\end{align*}
\end{proof}

In the following, we will apply  \eqref{zq:1}  of Lemma~\ref{lem:zq1} with $n=1$ and $n=2$ to derive a more detailed decomposition of $\mathbf{I}_G$ for $G \in \cH^k$. This refined decomposition will be useful for the induction argument in the $1/N$ expansion.

\bl 
For $G\in \cH^{k}$ with $n_G\in 2\mZ$, one has
\begin{equ}\label{zmm:1}
\mathbf{I}_G
=\sum_{G'\in \cG^{k}}a_{G'}\mathbf{I}_{G'}
+\frac1N\sum_{\substack{G''\in \cH^{k} \\n_{G''}\in  2\mZ}}
  a_{G''}\mathbf{I}_{G''}
+\frac1{N^{3/2}} \!\!\!\!\! \sum_{\substack{G''\in \cH^{k}\\ n_{G''}\in 2\mZ+1 }}\!\!\!\!\! a_{G''}\mathbf{I}_{G''}+\frac1{N^2}\E(\Wick{\PPhi^2})\sum_{G'\in \cG}a_{G'}\mathbf{I}_{G'}
\end{equ}
for suitable $a_{G'}, a_{G''}\in \mR$ independent of $N$.
For $G\in \cH^{k}$ with $n_G\in 2\mZ-1$, one has
\begin{equ}\label{zmm:2}
	\mathbf{I}_G=\frac1{\sqrt N}\sum_{\substack{G'\in \cH^{k}\\ n_G\in  2\mZ}}a_{G'}\mathbf{I}_{G'}+\frac1{N}\sum_{\substack{G''\in \cH^{k}\\ n_{G''}\in  2\mZ+1}}a_{G''}\mathbf{I}_{G''}+\frac1{N^{3/2}}\E(\Wick{\PPhi^2})\sum_{G'\in \cG}a_{G'}\mathbf{I}_{G'}
\end{equ}
for suitable $a_{G'}, a_{G''}\in \mR$ independent of $N$.
The sums above are all over finitely many terms.
\el
\begin{proof}
Denote $m=n_G$ for the graph $G$ given in the lemma.
We  consider the case $m\in 2\mN$,
 and apply \eqref{zq:1} $m/2$ times for the first term on the RHS of \eqref{zq:1} with $n=2$
(i.e. iteratively substitute $\mathbf{I}_{G'}$ therein by the RHS of \eqref{zq:1}). 
We have
\begin{equation}\label{zq:2}
	\aligned
	\mathbf{I}_G =&\sum_{G'\in \cG^{k}}a_{G'}\mathbf{I}_{G'}
	+\sum_{i=1}^{3}\frac1{ N^{i/2}}\sum_{\substack{G''\in \cH^{k}\\n_{G''}\in  2\mZ+i}}a_{G''}\mathbf{I}_{G''}
	%+\frac1{ N}\sum_{\substack{G''\in \cH^{k,2}\\ n_{G''}\in  2\mZ}}a_{G''}\mathbf{I}_{G''}.
	\endaligned
\end{equation}
Here we get  some  terms with coefficients $\frac1{N^{i/2}}, i=1,2,3$ coming from the repetitive applications of \eqref{zq:1} to the first term in \eqref{zq:1}, which can be incorporated into the corresponding terms in \eqref{zq:1} with the same coefficients $\frac1{N^{i/2}}, i=1,2,3$.
In the first term we have $G'\in \cG^{k}$ now, because each iteration decreases $n_{G'}$ by $2$.

From \eqref{zq:2} we find that for $i$ odd $n_{G''}$ is also odd for the associated $\mathbf{I}_{G''}$ with coefficient $\frac1{N^{i/2}}$.
As noted in \eqref{eq:IG2} and \eqref{eq:H2k} for $n_G$ odd, $\mathbf{I}_G$ is of order $\frac1{\sqrt N}$.   This observation is helpful because it allows us to convert these terms to the order of $\frac{1}{N^{(i+1)/2}}$.
%\hao{I don't quite understand this sentence. You only do this convert for $i=3$ right? 
%I think for $i=1$, you will ``use \eqref{zq:4}  for the terms $\mathbf{I}_{G''}$ in \eqref{zq:2} with coefficient $\frac1{N^{1/2}}$'' below.} \zhu{This sentence is for general idea and for $i=1$ we also convert them to the order of $\frac1N$. }

Turning to the case  $m\in 2\mN-1$, we apply \eqref{zq:1} $(m-1)/2$ times for the first term on the RHS of \eqref{zq:1} with $n=1$  to have
\begin{equation}\label{zq:3}
	\aligned
	\mathbf{I}_G =&\sum_{\substack{G'\in \cH^{k}\\ n_G=1}}a_{G'}\mathbf{I}_{G'}
	+\sum_{i=1}^{2}\frac1{ N^{i/2}}\sum_{\substack{G''\in \cH^{k}\\ n_{G''}\in  2\mZ+i-1}}a_{G''}\mathbf{I}_{G''}.
	%+\frac1{ N^{1/2}}\sum_{\substack{G''\in \cH^{k,2},\\ n_{G''}\in  2\mZ}}a_{G''}\mathbf{I}_{G''}.
	\endaligned
\end{equation}
Using \eqref{Phi2} we have 
%\zhu{We only use \eqref{e:phi2} and the last term comes from $\frac1N$ on the LHS of \eqref{e:phi2}.}\hao{I still don't understand:  LHS of \eqref{e:phi2}  is $0$, right? I also draw the picture for $G'$ and removed the indicator function, is it the correct picture?}  \hao{I just realized maybe you mean \eqref{Phi2}? In that case the picture below is not correct}
\begin{equ}\label{e:z1}
\frac1{\sqrt N}	\E(\Wick{\PPhi^2})
=\frac1{\sqrt N}\mathbf{I}_{G'}  %\1_{G'\in \cH^{1,2}, n_{G'}=2}
+\frac1{N^{3/2}}a'\E(\Wick{\PPhi^2}),
\end{equ}
with $G'\in \cH^{1,2}, n_{G'}=2$ given by
\begin{equ}
\begin{tikzpicture}
	\node[dot] (x) at (0,0) {};
	\node[dot] (y) at (1,0.4) {};
	\node[dot] (z) at (1,-0.4) {};
	\draw[C,bend right=20] (x) to (z);
	\draw[C,bend left=20] (x) to (y);
		\draw[Phi] (y) -- ++(0.18,0.18);
	\draw[Phi] (y) -- ++(0.25,0);
	\draw[Phi] (y) -- ++(0.18,-0.18);
	%\draw[Phi] (x) -- ++(0.18,0.18);
	\draw[Phi] (z) -- ++(0.18,0.18);
	\draw[Phi] (z) -- ++(0.25,0);
	\draw[Phi] (z) -- ++(0.18,-0.18);
\end{tikzpicture}
\end{equ}
and this applied to  the first term on the RHS of \eqref{zq:3} implies \eqref{zmm:2}.
% that for $m\in 2\mN-1$
%\begin{equ}\label{zq:4}
%	\mathbf{I}_G 
%	=\sum_{i=1}^{2}\frac1{ N^{i/2}} \!\!\!\!\!\! \sum_{\substack{G''\in \cH^{k}, \\n_{G''}\in 2\mZ+i-1}} \!\!\!\!\!\! a_{G''}\mathbf{I}_{G''}
%%	+\frac1{ N^{1/2}}\sum_{\substack{G''\in \cH^{k,2},\\ n_{G''}\in  2\mZ}}a_{G''}\mathbf{I}_{G''}
%	+\frac1{N^{3/2}}\E(\Wick{\PPhi^2})\sum_{G'\in \cG}a_{G'}\mathbf{I}_{G'},
%\end{equ}
%which gives \eqref{zmm:2}.
Using \eqref{zmm:2}  for the terms $\mathbf{I}_{G''}$ in \eqref{zq:2} with coefficient $\frac1{N^{1/2}}$ we obtain \eqref{zmm:1}.
%\begin{equation}\label{zq:5}
%	\aligned
%	\mathbf{I}_G =&\sum_{G'\in \cG^{k}}a_{G'}\mathbf{I}_{G'}
%	+\frac1{ N}\sum_{\substack{G''\in \cH^{k},\\ n_{G''}\in  2\mZ}}a_{G''}\mathbf{I}_{G''}
%%	+\frac1{ N}\sum_{\substack{G''\in \cH^{k,2},\\ n_{G''}\in  2\mZ}}a_{G''}\mathbf{I}_{G''}
%	\\&+\frac1{ N^{3/2}}\sum_{\substack{G''\in \cH^{k},\\ n_{G''}\in  2\mZ+1}}a_{G''}\mathbf{I}_{G''}+\frac1{N^{2}}\E(\Wick{\PPhi^2})\sum_{G'\in \cG}a_{G'}\mathbf{I}_{G'},
%	\endaligned
%\end{equation}
%which is just \eqref{zmm:1}.
\end{proof}

Now we are in a position to prove $1/N$ expansion of $f^{N}_{k,\eps}$.

\begin{lemma}\label{lem:FRkn}
	For any $k \ge 1$ and $p\geq1$
	we have the following representation for the $k$-point correlation
	\begin{equ}[ex:1]
		f^N_{k,\eps}  = \sum_{n=0}^p  \frac{1}{N^{n}} F_{n,\eps}^{k,1} +  \frac{1}{N^{p+1}} R_{p+1,\eps}^{k,1},\quad k\in 2\mN,
	\end{equ}
and
	\begin{equ}[ex:2]
	f^N_{k,\eps}  = \sum_{n=0}^p  \frac{1}{N^{n+1/2}} F_{n,\eps}^{k,2} +  \frac{1}{N^{p+3/2}} R_{p+1,\eps}^{k,2},\quad k\in 2\mN-1,
\end{equ}
	where the graphs associated with $F_{n,\eps}^{k,1}, F_{n,\eps}^{k,2}$ belong to $\cG^k$,
	and the functions
	$F_{n,\eps}^{k,1}$, $F_{n,\eps}^{k,2}$  are independent of $N$.

Moreover 	$R_{p+1,\eps}^k$ can be decomposed as the sum of finitely many terms of the form 
$b_G\mathbf{I}_G$ with $n_G\in 2\mZ$ 
and $\frac1{\sqrt N}b_{G'}\mathbf{I}_{G'}$ with $n_{G'}\in 2\mZ+1$ 
and $\frac1{N}\mathbf{I}_{G'}\E(\Wick{\PPhi^2})$. Here $b_G$ and $b_{G'}$ are constants independent of $N$. 
\end{lemma}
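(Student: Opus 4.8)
The plan is to establish both \eqref{ex:1} and \eqref{ex:2} by induction on $p$, using the two decomposition identities \eqref{zmm:1} (for graphs with $n_G$ even) and \eqref{zmm:2} (for $n_G$ odd), together with the scalar relation \eqref{e:z1} for $\E(\Wick{\PPhi^2})$, as the engine of the recursion; this refines the $1/\sqrt N$ expansion into a genuine $1/N$ (resp. half-integer) expansion. The starting point is that $f^N_{k,\eps}=\mathbf{I}_{G_0}$ for the edgeless graph $G_0$ on the $k$ special vertices, so that $n_{G_0}=k$; hence the parity of $n_{G_0}$ coincides with the parity of $k$, and it is exactly this parity that dictates whether the expansion runs over integer powers of $1/N$ (when $k$ is even) or over half-integer powers (when $k$ is odd).

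The induction hypothesis I would carry is that, up to the stated order, $f^N_{k,\eps}$ equals a partial sum of $N$-independent coefficients $F$ (each built from graphs in $\cG^k$) plus a prefactor times a remainder $R$ that is a finite sum of exactly three kinds of terms: \emph{(a)} $b_G\mathbf{I}_G$ with $n_G\in 2\mZ$; \emph{(b)} $\frac1{\sqrt N}b_{G'}\mathbf{I}_{G'}$ with $n_{G'}\in 2\mZ+1$; \emph{(c)} $\frac1N\mathbf{I}_{G'}\E(\Wick{\PPhi^2})$ with $G'\in\cG$. For $k$ even the base case is immediate, since $f^N_{k,\eps}=\mathbf{I}_{G_0}$ is a single term of type \emph{(a)}; for $k$ odd one applies \eqref{zmm:2} once to $\mathbf{I}_{G_0}$, which exhibits $f^N_{k,\eps}$ as $\frac1{\sqrt N}$ times a remainder of the same three-type form, giving the leading order $N^{-1/2}$.

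The induction step consists in processing each of the three types by one application of the appropriate identity, and checking that the three-type structure reproduces itself after a single $N$-independent residue is peeled off. Applying \eqref{zmm:1} to a type-\emph{(a)} term extracts the $N$-independent sum over $\cG^k$ (the only source of the next coefficient $F$) and leaves an $O(1/N)$ batch of even terms (type \emph{(a)}), an $O(N^{-3/2})$ batch of odd terms which I rewrite as $\frac1N\cdot\frac1{\sqrt N}(\cdots)$ (type \emph{(b)}), and an $O(N^{-2})\E(\Wick{\PPhi^2})$ contribution (type \emph{(c)}). Applying \eqref{zmm:2} to a type-\emph{(b)} term produces $\frac1{\sqrt N}\cdot\frac1{\sqrt N}=\frac1N$ times even terms (type \emph{(a)}), $\frac1{\sqrt N}\cdot\frac1N$ times odd terms (type \emph{(b)}), and an $\E$-contribution (type \emph{(c)}); crucially it generates no $N$-independent residue, which is why type \emph{(b)} never feeds the coefficients $F$. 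For a type-\emph{(c)} term I would substitute $\E(\Wick{\PPhi^2})=\mathbf{I}_{G''}+\frac1N a'\E(\Wick{\PPhi^2})$ from \eqref{e:z1} (with $G''\in\cH^{1,2}$ of even $n_{G''}$), turning $\frac1N\mathbf{I}_{G'}\E(\Wick{\PPhi^2})$ into a type-\emph{(a)} term $\frac1N\mathbf{I}_{G'}\mathbf{I}_{G''}$ plus a type-\emph{(c)} term carrying an extra factor $\frac1N$. Since $\E(\Wick{\PPhi^2})=O(1)$ uniformly in $N$ by Proposition~\ref{prop1} through \eqref{Phi2}, and since all graph sums remain finite because $k$ is fixed and every vertex has degree at most $4$, both the three-type structure and the finiteness propagate, closing the induction; the odd-$k$ case is verbatim after shifting every power by $\frac1{\sqrt N}$.

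The main obstacle is the exponent bookkeeping tied to parity: one must verify that every wrong-parity contribution (the $N^{-3/2}$ odd terms produced by \eqref{zmm:1} and the $N^{-1}$ odd terms produced by \eqref{zmm:2}) always arrives bundled with precisely the right half-power, so that once those odd terms are later processed through \eqref{zmm:2} and pick up a further $\frac1{\sqrt N}$, they land on an integer power of $1/N$ for $k$ even and never introduce a genuine half-integer power. Confirming that \eqref{zmm:1}, \eqref{zmm:2}, and \eqref{e:z1} together map the triple (type \emph{(a)}, type \emph{(b)}, type \emph{(c)}) back into $\frac1N$ times the same triple plus one $N$-independent residue is the technical heart of the argument. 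I note that the uniform bound $\|R^{k,1}_{p+1}\|,\|R^{k,2}_{p+1}\|\lesssim 1$ making this a bona fide asymptotic expansion is not part of this lemma; it is a separate estimate on each $\mathbf{I}_G$ obtained from Proposition~\ref{prop1}.
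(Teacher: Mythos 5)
Your proposal is correct and follows essentially the same route as the paper: induction on $p$ driven by \eqref{zmm:1}, \eqref{zmm:2} and \eqref{e:z1}, with the remainder maintained in the same three-type form (even-$n_G$ terms, $\frac{1}{\sqrt N}$ times odd-$n_G$ terms, and $\frac1N\E(\Wick{\PPhi^2})$ terms) so that each pass peels off one $N$-independent coefficient and gains a full factor of $\frac1N$. The only cosmetic difference is that for odd $k$ you factor out $\frac{1}{\sqrt N}$ up front and reuse the even-$k$ induction, whereas the paper runs the odd case separately; the bookkeeping is identical.
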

\begin{proof}We omit $\eps$ in the proof. By \eqref{zmm:1}, one has \eqref{ex:1} for $p=0$. Now we prove \eqref{ex:1} by induction.
Assume that for a fixed integer $p\ge 0$
we have already shown that
\begin{equs}[e:RNk1n]
	f_{k}^N  &= \sum_{n=0}^{p-1}  \frac1{N^{n}} F_{n}^{k,1} +  \frac1{N^{p}}R_{p}^{k,1},
	\\
	R_{p}^{k,1} &= \!\!\!\!\!\!\!\! 
	\sum_{\substack{G\in \mathcal H^k \\ n_G \in [0,m_1]\cap 2\mathbb{Z}}}
		\!\!\!\!\!\!\!\! b_G \mathbf{I}_G
	+\frac1{\sqrt N} \!\!\!\!\!\!\!\! 
	\sum_{\substack{G'\in \mathcal H^k \\ n_{G'}\in [0,m_2]\cap (2\mathbb{Z}+1)}}
		\!\!\!\!\!\!\!\! b_{G'} \mathbf{I}_{G'}+\frac1{N}\E(\Wick{\PPhi^2})\sum_{G'\in \cG}a_{G'}\mathbf{I}_{G'},\quad k\in 2\mN,
	\end{equs}
	for  some $m_1 \in 2\mathbb{N}, m_2\in 2\mN-1$ which may depend on $p$,
	and some coefficients $b_G, b_{G'}, a_{G'}\in \R$ independent of $N$.
	We then prove that the same holds with $p$ replaced by $p+1$, with updated values of $m_1, m_2$ and $b_G, b_{G'},  a_{G'}$.
	In \eqref{e:RNk1n}, we apply \eqref{zmm:1} to $\mathbf{I}_G$, apply   \eqref{zmm:2} to $\mathbf{I}_{G'}$, and apply \eqref{e:z1} to $\E(\Wick{\PPhi^2})$. We then obtain
	\begin{equ}
		R_{p}^{k,1} 
		= \sum_{G\in \mathcal G^k }
		 b_G \mathbf{I}_G
		 +\frac1N \!\!\!\!\!\! \sum_{\substack{G\in \mathcal H^k \\ n_G \in [0,m_1]\cap 2\mathbb{Z}}}
		\!\!\!\!\!\!\!\! b_G \mathbf{I}_G
		+\frac1{ N^{3/2}}\!\!\!\!\!\!\!\!
		 \sum_{\substack{G\in \mathcal H^k \\ n_G\in [0,m_2]\cap 2\mathbb{Z}+1}}
		\!\!\!\!\!\!\!\! b_G \mathbf{I}_G+\frac1{N^2}\E(\Wick{\PPhi^2})\sum_{G'\in \cG}a_{G'}\mathbf{I}_{G'}.
	\end{equ}
	We plug this into the first equality in \eqref{e:RNk1n} and  obtain that \eqref{e:RNk1n} holds with $p+1$. Using the fact that $\mathbf{I}_G, G\in \cG$ only depends on $C$, $K$ and is independent of $N$,  \eqref{ex:1} follows.
	
For $k$ odd, we first prove that \eqref{ex:2} holds with $p=0$. Using \eqref{zmm:2} we have
\begin{align*}
	f_{k}^N=\frac1{\sqrt N}\sum_{\substack{G'\in \cH^{k},\\ n_G\in  2\mZ}}a_{G'}\mathbf{I}_{G'}+\frac1{N}\sum_{\substack{G''\in \cH^{k},\\ n_{G''}\in  2\mZ+1}}a_{G''}\mathbf{I}_{G''}+\frac1{N^{3/2}}\E(\Wick{\PPhi^2})\sum_{G'\in \cG}a_{G'}\mathbf{I}_{G'},
\end{align*}
Applying \eqref{zmm:2} again for $\mathbf{I}_{G''}$ and using \eqref{e:z1} to replace $\E(\Wick{\PPhi^2})$, we obtain for $k$ odd
\begin{align*}
f_{k}^N
&=\frac1{\sqrt N}\sum_{\substack{G'\in \cH^{k}, \\n_G\in  2\mZ}}a_{G'}\mathbf{I}_{G'}
+\frac1{ N^{3/2}}\sum_{\substack{G'\in \cH^{k},\\ n_G\in  2\mZ}}a_{G'}\mathbf{I}_{G'}
\\
&+\frac1{N^{2}} \!\!\!\!\! \sum_{\substack{G''\in \cH^{k},\\ n_{G''}\in  2\mZ+1}}\!\!\!\!\! a_{G''}\mathbf{I}_{G''}+\frac1{N^{5/2}}\E(\Wick{\PPhi^2})\sum_{G'\in \cG}a_{G'}\mathbf{I}_{G'},
\end{align*}
with updated  cofficients $a_{G'}, a_{G''}$ independent of $N$.
Applying \eqref{zmm:1} for $\mathbf{I}_{G'}$ from the first term on the RHS, we obtain  \eqref{ex:2}  with $p=0$.

We then	assume that for a fixed integer $p\ge 0$ 
we have already shown that
\begin{equs}[e:RNk2n]
f_{k}^N  &= \sum_{n=0}^{p}  \frac1{N^{n+1/2}} F_{n}^{k,2} +  \frac1{N^{p+3/2}}R_{p}^{k,2},
		\\
R_{p+1}^{k,2} 
&= \!\!\!\!\!\!\!\! \sum_{\substack{G\in \mathcal H^k \\ n_G \in [0,m_1]\cap 2\mathbb{Z}}}
		\!\!\!\!\!\!\!\! b_G \mathbf{I}_G+\frac1{\sqrt N}
		\!\!\!\!\!\!\!\! \sum_{\substack{G'\in \mathcal H^k \\ n_{G'}\in [0,m_2]\cap (2\mathbb{Z}+1)}}
		\!\!\!\!\!\!\!\! b_{G'} \mathbf{I}_{G'}+\frac1{N}\E(\Wick{\PPhi^2})\sum_{G'\in \cG}a_{G'}\mathbf{I}_{G'},\quad k\in 2\mN,
\end{equs}
	for  some $m_1 \in 2\mathbb{N}, m_2\in 2\mN-1$ which may depend on $p$,
	and some coefficients $b_G, b_{G'}\in \R$.
Using \eqref{zmm:1}, \eqref{zmm:2} and \eqref{e:z1} again, \eqref{e:RNk2n} holds with $p$ replaced by $p+1$, with updated values of $m_1, m_2$ and $b_G, b_{G'}$. Hence, \eqref{ex:2} follows.
\end{proof}

To extend Lemma \ref{lem:FRkn} to the continuum setting, and prove Proposition~\ref{pr:N} below, we follow a similar approach as presented in \cite{SZZ21}. The proof strategy involves reducing each graph from Lemma \ref{lem:FRkn} to a tree 
and then employing inductive arguments to obtain the desired estimates. 
Similarly as in \cite{SZZ21} we repeatedly apply IBP \eqref{e:IBP-CN}; and in this procedure, we  use red color for the new line appearing for the last term in \eqref{e:IBP-CN} and use green color for the new line appearing for the first term on the LHS of \eqref{e:IBP-CN}.  However, we also have new operators $K$ here, which correspond to the new lines in $E^1_{G^e}$, and we color them in blue. As mentioned in Remark \ref{rem}, here in our case, we only apply the versions of IBP as given by  Lemma \ref{lem:IBP}, Lemma \ref{lem:32} and Lemma \ref{lem:1}.
As a consequence, 
\begin{itemize}
	\item we use red color for the new line  for $C$   corresponding to $\cI$ in  \eqref{e:defO} and \eqref{e:same} and  %Lemma~\ref{lem:32}\hao{Both \eqref{e:same} and \eqref{e:Q1234}?} and 
	Eq.~\eqref{Phi2};
	\item  we  use green color for the other new line for  $C$ from   \eqref{e:defO} and \eqref{e:same};
	\item we use blue color for the new lines in $E_{G^e}^1$ corresponding to $K$.
\end{itemize}
%\hao{I feel it's still not easy for the reader to understand. How about this: in  \eqref{Phi2} I colored $C$ and $\cI$, and we can do this for the relevant equations in Sec 3 too. Then in Sec 3 we can write ``the meaning of these colors will be clear in Sec 5''?}
Since the aforementioned lemmas are all applications of IBP \eqref{e:IBP-CN}, the coloring rule here
is essentially the same as \cite{SZZ21}, except that we have a new blue color.
We emphasize that we do not change the  color of the existing lines when transitioning from $G=(V,E)$ to $G^e=(V^{G^e},E^{G^e})$; we simply color the additional  lines in blue.
 In comparison to \cite{SZZ21}, the LHS of \eqref{eq:inductive} produces a new operator called $K$ which remains unchanged.
 
  A tree is defined as a graph that does not include the green line. By replacing $C_\eps(x-y)$ with $\E (Z_\eps'(x)Z_\eps'(y))$, where $Z_\eps' \overset{d}{=} Z_\eps$ and $Z_\eps'$ is independent of $\Phi_\eps$, we can transform each graph $G$ into $k$ trees without changing the value of $\mathbf{I}_G$. (See \cite[Lemma 3.7]{SZZ21}).

We first pass the result obtained above to continuum.

\bl\label{convergencek} It holds that for $\kappa>0$ and $k\in \mN$, $n, p\in \mN$
$$
\lim_{\eps\to0}\cE^\eps_k F_{n,\eps}^{k,i}=F_{n}^{k,i} \quad\textrm{ in }\quad (H^{-\kappa})^k,\quad i=1,2,
$$
and
$$\lim_{\eps\to0}\cE^\eps_kR_{p,\eps}^{k,i}=R_{p}^{k,i} \quad\textrm{ in } \quad (H^{-\kappa})^k,\quad i=1,2,$$
Here   $F_n^{k,i}, i=1,2,$ can be written as integrals of the Green function  $C$ of $\m-\Delta$ and the kernel $K$ from Lemma \ref{lem:K}. The functions $R_p^{ k,i}, i=1,2,$  depend on $C$, $k$ and $\Phi$. The associated graph of $F_n^{k,i}, i=1,2,$ are the same as $F_{n,\eps}^{k,i}$  with $C_{\eps}$  in \eqref{def:IGk} and \eqref{def:IGkK} replaced by the Green's function  $C$ and the sum over $\Lambda_{\eps}$ replaced by the integral over $\mT^2$. The functions $F_n^{k,i}, i=1,2$ are independent of $N$. 
\el
We give the proof after Proposition \ref{pr:N}.

Now we are in a position to state the main results of this section.

\bt\label{m:5}
	For any $k \ge 1$ and $p\geq1$
	we have the following representation for the $k$-point correlation
	\begin{equ}
		f^N_{k}  = \sum_{n=0}^p  \frac{1}{N^{n}} F_{n}^{k,1} +  \frac{1}{N^{p+1}} R_{p+1}^{k,1},\quad k\in 2\mN,
	\end{equ}
	and
	\begin{equ}
		f^N_{k}  = \sum_{n=0}^p  \frac{1}{N^{n+1/2}} F_{n}^{k,2} +  \frac{1}{N^{p+3/2}} R_{p+1}^{k,2},\quad k\in 2\mN-1,
	\end{equ}
	where $F_{n}^{k,1}, F_{n}^{k,2}$ and $R_{p+1}^{k,1}, R_{p+1}^{k,2}$ are given in Lemma \ref{convergencek}. The equality holds in $(H^{-\kappa})^k$ for $\kappa>0$.
\et
\begin{proof}
	The result follows by applying the extension opeartor $\cE_k^\eps$ defined in \eqref{def:Ek} on both sides of \eqref{ex:1} and \eqref{ex:2} and using Lemma \ref{convergencek}.
\end{proof}

Now to prove Theorem \ref{main} it remains to prove the uniform in $N$ bounds for $R_{p+1}^{k,i}, i=1,2$. 

\bp\label{pr:N} 
Let $\m$ as in Lemma \ref{th:m2}. It holds that for every $p, k\in \mN$ and $\kappa>0$
\begin{align*}
	\|R_{p+1}^{k,1}\|_{(H^{-\kappa})^k}+\|R_{p+1}^{k,2}\|_{(H^{-\kappa})^k}\lesssim1,
\end{align*}
where the implicit constant is independent of $N$.
\ep
\begin{proof}
	It suffices to prove that $\mathbf{I}_G$ for each graph $G$ showing up in the expression of $R_{p+1}^{k,1},R_{p+1}^{k,2}$ is of order $1$, 
	since $\E(\Wick{\PPhi^2})$ is of order $1$ and all the coefficients
	$b_{G'}$ are independent of $N$. 
The proof follows similarly as in \cite[Lemma 4.2]{SZZ21}. We omit $\eps$ in the proof.
	
	\medskip\medskip\medskip%\medskip\medskip
	\textbf{Step 1. Reduction to trees.}
	
	We first introduce the stochastic objects when reducing each graph to trees.
	When we replace the line $C(x-y)$ by $\E(Z^{(i)}(x)Z^{(i)}(y))$ with $\{Z^{(i)}\}_{i\in \mN}$ being i.i.d. random variables and $\{Z^{(i)}\}_{i\in \mN}$ being an independent copy of $\{Z_i\}_{i\in\mN}$, we also encounter  the following stochastic objects
\begin{equs}[st:1]
Z^{(i)},\qquad\text{and}& \qquad	\Wick{Z^{(i)}\Phi_1},\quad \Wick{Z^{(i)}Z^{(j)}},
\\
\text{and}\qquad \frac1{\sqrt N}\Wick{Z^{(i)}\PPhi^2},\quad &\Wick{Z^{(i)}Z^{(j)}\Phi_1},\quad \Wick{Z^{(i)}Z^{(j)}Z^{(k)}}
\end{equs}
where $i,j,k$ are distinct
(see Figure~\ref{Fig2} for an example.)
These stochastic objects play the same role as 
$\Phi_1$, $\frac1{\sqrt N}\Wick{\PPhi^2}$ and $\frac1{\sqrt N}\Wick{\Phi_1\PPhi^2}$ in $\mathbf{I}_G$, respectively.
The Wick products here are understood as
	\begin{align*}
		\Wick{Z^{(i)}\Phi_1}\eqdef Z^{(i)}\Phi_1,\qquad
		 \Wick{Z^{(i)}Z^{(j)}} & \eqdef Z^{(i)}Z^{(j)}, \qquad 
		 \Wick{Z^{(i)}\PPhi^2}\eqdef Z^{(i)}\Wick{\PPhi^2},
		 \\
		 \Wick{Z^{(i)}Z^{(j)}\Phi_1}\eqdef Z^{(i)}Z^{(j)}\Phi_1,\qquad &\Wick{Z^{(i)}Z^{(j)}Z^{(k)}}\eqdef Z^{(i)}Z^{(j)}Z^{(k)}.
	\end{align*}
We set $\sD\eqdef\cup_{i=0}^3\sD_i$ with $\sD_0\eqdef\{1\}$  and 
\begin{align*}
	\sD_1\eqdef\{Z^{(i)}, \Phi_1\},\quad \sD_2\eqdef\Big\{\Wick{Z^{(i)}\Phi_1},\Wick{Z^{(i)}Z^{(j)}}, \frac1{\sqrt N}\Wick{\PPhi^2}\Big\},
	\\\sD_3\eqdef \Big\{\frac1{\sqrt N}\Wick{Z^{(i)}\PPhi^2}, \Wick{Z^{(i)}Z^{(j)}\Phi_1}, \Wick{Z^{(i)}Z^{(j)}Z^{(k)}},\frac1{\sqrt N}\Wick{\Phi_1\PPhi^2}\Big\}.
\end{align*}
	By similar calculations as in Proposition \ref{prop1} we have for $\kappa>0$,  $\ell\geq 1$ and $f\in \sD$
\begin{equation}\label{sto:2}
	\aligned
	\E	\|f\|_{H^{-\kappa}}^\ell\lesssim1.
	\endaligned
\end{equation}

	 We then reduce each graph $G$ to a disjoint union of $k$ trees $\sqcup_{i=1}^k T_i$ by replacing the green lines with $\E(Z^{(i)}(x)Z^{(i)}(y))$.
	 Then, 
	$\mathbf{I}_G $ is the expectation
	of a product of $k$ functions,
	and each of these  $k$ functions have the following form
	\begin{equ}[e:F_T]
		F_{T}(x_{u^*})=\int \prod_{\{u,v\}\in E_{T}}D(x_u,x_v)
		\Big(
		\prod_{v\in V_{T}}f_v(x_v) g_v^T(x_v)
		\Big)
		\prod_{v\in V_{T}\backslash \{u^*\}} \dif x_v,
	\end{equ}
	with $D=C$ or $K$,  $f_v\in \sD$, $g_v^T=1$.
	Here and below we just write $T$ for $T_i$ to simplify the notation,
	and we have introduced the function $g_v^T$ for the purpose of induction later.
	
	 For each tree we will  prove by induction that for $\kappa>0$
	 \begin{align}\label{est:FT}
	 	\|F_T\|_{H^{-\kappa}}\lesssim \prod_{v\in V_T} \|f_v\|_{H^{-\kappa}}
	 \end{align}  
 with $f_v\in\sD$. Assuming this, we have
 \begin{align*}
 	\|\mathbf{I}_G\|_{(H^{-k\kappa})^k}\lesssim \E\prod_{i=1}^k\|F_{T_i}\|_{H^{-\kappa}}\lesssim\E\prod_{i=1}^k\prod_{v\in V_{T_i}} \|f_v\|_{H^{-\kappa}}.
 \end{align*}
 The result then follows from \eqref{sto:2} and H\"older's inequality. 
	 
Figure~\ref{Fig2} explains the procedure to turn a graph into a tree, where each tiny green wavy line denotes a factor of  $Z^{(i)}$.

	\begin{figure}[h]
		\begin{tikzpicture}[baseline=-60]
			\node[dot] (x1) at (0,0) {};	\node at (-0.3,0) {$u_1^*$};
			\node[dot] (x2) at (2,0) {};\node at (2.3,0) {$u_2^*$};
			\node[dot] (y1) at (1,-1) {};
			%\node[dot] (y2) at (-1,-2) {};
			%\node[dot] (y3) at (1,-2) {};
			\node[dot] (z1) at (1,-2) {};
			\node[dot] (z2) at (0,-3) {};
			\node[dot] (z3) at (2,-3) {};
			\node[dot] (z4) at (-0.5,-4) {};
			\node[dot] (z5) at (0.5,-4) {};
			\draw[Cr] (x1) to (y1);
			\draw[K] (y1) to (z1);
			%\draw[Cr] (z1) to (y3);
			%\draw[Cr] (y2) to (z1);
			\draw[Cr] (z1) to (z2);
			\draw[Cr] (z1) to (z3);
			\draw[Cr] (z2) to (z4);
			\draw[Cr] (z2) to (z5);
			\draw[Cg] (x1) to (x2);
			\draw[Cg] (x2) to (y1);
			\draw[Cg] (z2) to (z3);
			\draw[Cg] (z4) to (z5);
				\draw[Phi] (z3) -- ++(0.18,-0.18);\draw[Phi] (z3) -- ++(-0.18,-0.18);
			\draw[Phi] (z4) -- ++(0.18,-0.18);\draw[Phi] (z4) -- ++(-0.18,-0.18);
			\draw[Phi] (z5) -- ++(0.18,-0.18);\draw[Phi] (z5) -- ++(-0.18,-0.18);
			%		\draw[Phi] (z1) -- ++(-0.25,0);
			%		\draw[Phi] (y2) -- ++(-0.25,0);\draw[Phi] (y2) -- ++(-0.18,-0.18);
			% 添加箭头
			%\draw[->, thick] (2.8,-2.5) -- (4.2,-2.5);
		\end{tikzpicture}
		$\qquad \Longrightarrow \qquad$
		\begin{tikzpicture}[baseline=-60]
			\node[dot] (x1) at (0,0) {};	\node at (-0.4,0) {$u_1^*$};
			\node[dot] (x2) at (2,0) {};\node at (2.3,0) {$u_2^*$};
			\node[dot] (y1) at (1,-1) {};
			\node[dot] (z1) at (1,-2) {};
			\node[dot] (z2) at (0,-3) {};
			\node[dot] (z3) at (2,-3) {};
			\node[dot] (z4) at (-0.5,-4) {};
			\node[dot] (z5) at (0.5,-4) {};
			\draw[Cr] (x1) to (y1);
			\draw[K] (y1) to (z1);
			\draw[Cr] (z1) to (z2);
			\draw[Cr] (z1) to (z3);
			\draw[Cr] (z2) to (z4);
			\draw[Cr] (z2) to (z5);
			\draw[Z] (x1) -- ++(-0.18,-0.18);
			\draw[Z] (x2) -- ++(0.18,-0.18);\draw[Z] (x2) -- ++(-0.18,-0.18);
			\draw[Z] (y1) -- ++(0.18,-0.18);
			\draw[Z] (z2) -- ++(0,-0.25);
			\draw[Phi] (z3) -- ++(0.18,-0.18);\draw[Phi] (z3) -- ++(-0.18,-0.18);\draw[Z] (z3) -- ++(0,-0.25);
			\draw[Phi] (z4) -- ++(0.18,-0.18);\draw[Phi] (z4) -- ++(-0.18,-0.18);\draw[Z] (z4) -- ++(0,-0.25);
			\draw[Phi] (z5) -- ++(0.18,-0.18);\draw[Phi] (z5) -- ++(-0.18,-0.18);\draw[Z] (z5) -- ++(0,-0.25);
			
			\draw[dashed] (-1,-2.3) rectangle (3,-4.5);
			%\node  at (-.7,-2) {$\bar u$};
			%\node  at (-0.2,-3) {$\bar v$};
			\node  at (3.1,-3) {$=g_{\bar T}$};
		\end{tikzpicture}
		\caption{An illustration for reducing graphs to trees, and inductive integrations, for the case $k=2$. In the right picture, there are three stochastic objects of the form $Z^{(i)}$, and one stochastic object of the form $\Wick{Z^{(i)}Z^{(j)}}$, and three stochastic objects of the form $\frac1{\sqrt N}\Wick{Z^{(i)}\PPhi^2}$}
		\label{Fig2}
	\end{figure}
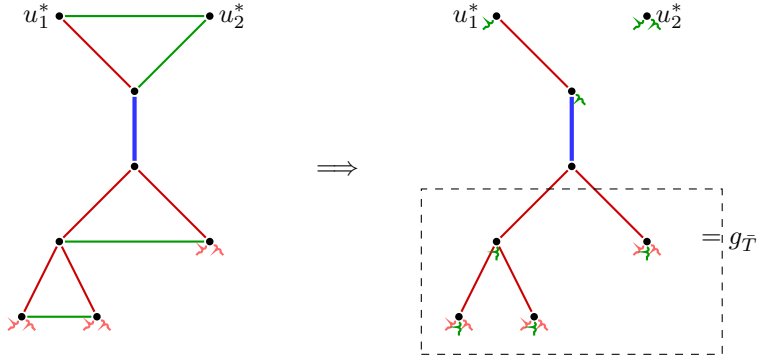
	
\medskip\medskip\medskip\medskip
%\medskip\medskip\medskip\medskip\medskip
	
	\textbf{Step 2. Estimate of each tree.}
	
	In this step we prove \eqref{est:FT}. 	Fixing a rooted tree $T$ as above, we will integrate the variables in $F_T$ in \eqref{e:F_T}
	from the leaves 
	of the tree $T$ and estimate the effect
	of the integrations as in \cite{SZZ21}. More precisely, we claim that
	for every subtree $\bar T$ of $T$ which contains the root $u^*$,
	\eqref{e:F_T} still holds with $T$ and $g^T$ on the RHS replaced by $\bar T$ and $g^{\bar T}$, %\rong{It's confused here. Here the LHS of \eqref{e:F_T} does not change and the right has different formula?}
	where the functions $g$  (which depend on $\bar T$) are such that
	$\|g\|_{\bC^{1-2\kappa}}$ is bounded by  $\Pi_{j\in I_g}\|f_j\|_{H^{-\kappa}}, f_j\in \sD$ for some index set $I_g\subset \mN$.  In this case, the $f_v$ in the vertex that belongs to $V_T\backslash V_{\bar T}$ has been incorporated into the definition of $g^{\bar T}$. The function $g$ is given as an example in Figure \ref{Fig2}, and $\bar T$ represents the remaining subtree obtained by subtracting the dashed box portion in the picture from the tree $T$ for $u_1^*$.
	
	We  claim that \eqref{e:F_T} still holds for the subtree $\bar T$ has $2$ vertices including $u^*$ and $v$ with $\|g^{\bar T}_v\|_{\bC^{1-2\kappa}}+\|g^{\bar T}_{u^*}\|_{\bC^{1-2\kappa}}$  bounded by  $\Pi_{j\in I_g}\|f_j\|_{H^{-\kappa}}, f_j\in \sD$ for some index set $I_g\subset \mN$. The expression for $F_T(x_{u^*})$ in this case can take one of the following forms:
	\begin{align*}
		f_{u^*},\quad f_{u^*}\cI(g^{\bar T}_vf_v),\quad g^{\bar T}_{u^*}\cI(g^{\bar T}_vf_v) ,\quad K(f_vg_v^{\bar T}).
	\end{align*}
	 Using Lemma \ref{lem:K}, Lemma \ref{lem:multi} and  \eqref{eq:sch}, we obtain \eqref{est:FT}.
	 
		The proof of the claim can be established by employing downward induction on the value of $|V_{\bar T}|$, which can be carried out similarly to the approach presented in \cite[Lemma 4.2]{SZZ21}. We will not provide the detailed proof here and only highlight the main modifications.

	The first change is that we only have the  $H^{-\kappa}$ norm of $f_v\in \sD$ by \eqref{sto:2}, while in \cite{SZZ21} we have $\bC^{-\kappa}$-norm estimates for the related terms in $\Phi^4_2$ model. Instead, for $\cI f$
	we apply the following from Lemma \ref{lem:emb}, Lemma \ref{lem:multi} and \eqref{eq:sch}
	\begin{equs}[e:I]
		\|\cI(g)\|_{\bC^{1-2\kappa}}
		&\lesssim \|g\|_{H^{-\kappa}},\quad
		\|\cI(gf)\|_{\bC^{1-2\kappa}}
		\lesssim \|g\|_{\bC^{1-2\kappa}}\|f\|_{H^{-\kappa}},
	\end{equs}
	for $0<\kappa<1/3$.

	 The second difference is that the graph involves the kernel $K$.  Since for each $K$ it only connects two points $u,y$ with $\{u,y\}\in E^1_{G^e}$ and the green line $C$ connecting the point $y$ gives $Z^{(i)}(y)$ by replacing $C(\cdot-y)$ as $\E(Z^{(i)}(\cdot)Z^{(i)}(y))$, the integration over $x_y$ is in one of the following forms:
	\begin{equation}\label{int:1}
		\aligned
		\int K(x_u-x_y)&(f_y(x_y)g^{\bar T}_y(x_y))\dif x_y,
		 \\ \int C(x_{\bar u}-x_u)&\Big(Z^{(k)}(x_u)\int K(x_u-x_y)(f_y(x_y)g^{\bar T}_y(x_y))\dif x_y\Big)\dif x_u,
		\\ \int C(x_{\bar u}-x_u)&\Big(\cI(g^{\bar T}_z)(x_u)\int K(x_u-x_y)(f_y(x_y)g^{\bar T}_y(x_y))\dif x_y\Big)\dif x_u,
		\endaligned
	\end{equation}
%\zhu{Here the three cases may require some graphs to explain? This also corresponds to the graph in the definition of the graph $G_e$.}
 where $f_y=\Wick{Z^{(i)}Z^{(j)}}$ or $Z^{(i)}$ or $1$ and $\|g^{\bar T}\|_{\bC^{1-2\kappa}}$ is controlled by the products of  $H^{-\kappa}$-norm of several  stochastic objects in $\sD$.
%  $\cI(f_{ T})Z^{(i)}$ or $\cI(f_{ T_1})\cI(f_{T_2})$ for some $f_{T_1}, f_{T_2}, f_T$ with $$\|f_T\|_{H^{-\kappa}}+\|f_{T_1}\|_{H^{-\kappa}}+\|f_{T_2}\|_{H^{-\kappa}}$$ controlled by the products of  $H^{-\kappa}$-norm of several  stochastic objects in \eqref{st:1} and $\Phi_1$, $\frac1{\sqrt N}\Wick{\PPhi^2}$ and $\frac1{\sqrt N}\Wick{\Phi_1\PPhi^2}$.

%\begin{equ}
%	\begin{tikzpicture}[baseline=-10]
%		\node[dot](x0) at (0,1) {};	\node at (-0.3,1) {$u$};
%		\node[dot] (x1) at (0,0) {};	\node at (-0.3,0) {$y$};
%		%\node[dot] (y2) at (-1,-2) {};
%		%\node[dot] (y3) at (1,-2) {};
%		\node[dot] (y1) at (1,-1) {}; %\node at (0.7,-1) {$v_i$};
%		\node[dot] (y2) at (-1,-1) {}; %\node at (-1.3,-1) {$x_i$};
%		\draw[K] (x0) to (x1);
%		\draw[C] (x1) to (y1);
%		\draw[C] (x1) to (y2);
%		%\draw[Phi] (y1) -- ++(0.18,-0.18);\draw[Phi] (y1) -- ++(-0.18,-0.18);\draw[Phi] (y1) -- ++(0,-0.25);
%		%\draw[Phi] (y2) -- ++(0.18,-0.18);\draw[Phi] (y2) -- ++(-0.18,-0.18);\draw[Phi] (y2) -- ++(0,-0.25);
%	\end{tikzpicture}
%\end{equ}
  
For instance,  the first case in \eqref{int:1} is exemplified by Figure \ref{Fig1} (in which case $u=u^*$).
The second case is exemplified by Figure \ref{Fig2}. The third case can arise when we replace the green wavy line in Figure \ref{Fig2} with one branch from the dashed box in Figure \ref{Fig2}.  In the second and third cases, we consider $g^{T'}_{\bar u}(x_{\bar u})$ defined by the second and third lines in \eqref{int:1}.   We aim to prove that  $\|g^{T'}_{\bar u}\|_{\bC^{1-2\kappa}}$ can be controlled by the products of the $H^{-\kappa}$-norms of several stochastic objects in $\sD$.

	For the first term in \eqref{int:1}, by Lemma \ref{lem:K},  for $\kappa>0$, $\|K(f_yg^{\bar T}_y)\|_{H^{-\kappa}}$ can be controlled by one of the following three terms
	\begin{align*}
		 \|\Wick{Z^{(i)}Z^{(j)}}\|_{H^{-\kappa}},\quad \|Z^{(i)}\|_{H^{-\kappa}}\|g^{\bar T}_y\|_{\bC^{1-2\kappa}},\quad \|g^{\bar T}_y\|_{\bC^{1-2\kappa}},
	\end{align*}
which is bounded by the products of $H^{-\kappa}$-norm of several $f_v\in \sD$. 

For the second term in \eqref{int:1}  we decompose the operator $K=I+L$ and we can write the second term as
\begin{align*}
	\cI \left(\Wick{f_yZ^{(k)}}g^{\bar T}_y \right)+\cI\left(Z^{(k)}L(f_yg^{\bar T}_y)\right),
\end{align*}
the $\bC^{1-2\kappa}$-norm of which by Lemma  \ref{lem:K} and Lemma \ref{lem:multi}, \eqref{eq:sch} is bounded by
\begin{align*}
 \Big(\|\Wick{f_yZ^{(k)}}\|_{H^{-\kappa}}+\|Z^{(k)}\|_{H^{-\kappa}}\|f_y\|_{H^{-\kappa}}\Big)\|g^{\bar T}_y\|_{\bC^{1-2\kappa}}.
\end{align*}

For the third term we  write it as
\begin{align*}
	\cI(\cI(g^{\bar T}_z)K(f_yg^{\bar T}_y)),
\end{align*}
the $\bC^{1-2\kappa}$-norm of which is bounded by
$$\|g^{\bar T}_z\|_{\bC^{1-2\kappa}}\|K(f_yg^{\bar T}_y)\|_{H^{-\kappa}}\lesssim \|g^{\bar T}_z\|_{\bC^{1-2\kappa}}\|f_y\|_{H^{-\kappa}}\|g^{\bar T}_y\|_{\bC^{1-2\kappa}}.$$
Here we use Lemma  \ref{lem:K} and Lemma \ref{lem:multi}, \eqref{eq:sch}. Hence, the $H^{-\kappa}$-norm of the first integral in \eqref{int:1} and the $\bC^{1-2\kappa}$-norm of the last two integrals in \eqref{int:1} can be bounded by products of  $H^{-\kappa}$-norm of several  stochastic objects in $\sD$.

The last difference is that we may meet $\E(\Wick{\PPhi^2})$ and we  apply \eqref{e:z1} to get some new graphs. In this case we can bound $\E(\Wick{\PPhi^2})$  by $1$. For the rest graph from  $\E(\Wick{\PPhi^2})$ in \eqref{e:z1}, we can also 
%replace the line $C(x-y)$ from the first term on the LHS of IBP \eqref{e:IBP-CN} by $\E (Z'(x)Z'(y))$  and 
use \eqref{e:I} and Proposition \ref{prop1},  \eqref{uni:d1}, \eqref{uni:d2} to bound
\begin{align*}
	\|\mathbf{I}_{G'}\|_{\bC^{1-\kappa}}\lesssim1,
\end{align*}
with the graph $G'$ from \eqref{e:z1} and  the implicit constant independent of $N$.
 The rest is the same as in \cite[Lemma 4.2]{SZZ21} and \eqref{est:FT} follows.
\end{proof}

We finally give the proof of Lemma \ref{convergencek}.

\begin{proof}[Proof of Lemma \ref{convergencek}]
	The proof follows from the same argument as in \cite[Section 4.2]{SZZ21} and the discrete version of Proposition \ref{prop1}, i.e. \eqref{uni:d1} and \eqref{uni:d2}. The main difference is as in the proof of Proposition \ref{pr:N}, i.e.    we have a new kernel $K_\eps$ from $(I-C_\eps^2*\cdot)^{-1}$. From Lemma \ref{kcon} we have for $\kappa,\delta>0$
	\begin{align*}
		\|\cE^\eps( K_\eps*f_\eps)-K*\cE^\eps f_\eps\|_{H^{-\kappa-\delta}} &\lesssim \eps^{\delta/2}\|f_\eps\|_{H^{-\kappa,\eps}},
\\
		\|\cE^\eps( L_\eps*f_\eps)-L*\cE^\eps f_\eps\|_{H^{2-\kappa-2\delta}} &\lesssim \eps^{\delta/2}\|f_\eps\|_{H^{-\kappa,\eps}}.
	\end{align*}
	This combined with Lemma \ref{lem:Epro}, Lemma \ref{Cepro} implies the result.
\end{proof}

\section{Next order stationary dynamic}\label{sec:6}

 As in Section \ref{s:uni} we have a stationary process $(\Phi_i,Z_i)_{1\leq i\leq N}$ such that the components $\Phi_i$ and $Z_i$ are stationary solutions to \eqref{eq:Phi2d} and \eqref{eq:li1} respectively.
In stationarity, when $N\to\infty$, $\Phi_i$ converges to $Z_i$ as proved in \cite{SSZZ2d}.
In this section, we  consider the ``fluctuations'' of the stationary dynamics, namely the next order correction.
Subtracting \eqref{eq:Phi2d} and \eqref{eq:li1}, we obtain
$$
\sL (\Phi_i-Z_i) = \frac1N  \sum_{j=1}^N \Wick{\Phi_j^2 \Phi_i}.
$$
Multiplying $\sqrt N$ and setting $u_i^N=\sqrt N(\Phi_i-Z_i)=\sqrt NY_i$, we have
\begin{equation}\label{eq:u}
\sL u_i^N = \frac1{\sqrt N}  \sum_{j=1}^N \Wick{\Phi_j^2 \Phi_i}.
\end{equation}
Although the RHS ``appears'' to be of order $\frac1{\sqrt N}\times N=\sqrt{N}$, it is actually bounded as we show 
in the next theorem,
so that the above scaling by $\sqrt{N}$ is indeed the right scale to observe the next order fluctuation.

\bt\label{th:5.1} For every $k\in \mN$, $\{(u_1^N,\dots,u_k^N)\}_N$ is tight in $$\Big(L^2_{\text{loc}}(\mR^+;\bC^\kappa)\cap L^2_{\text{loc}}(\mR^+;H^\delta)\cap C(\mR^+;H^{-2\kappa})\Big)^k,$$
for some small $\kappa>0, 0<\delta<1$.
Every tight limit $u_i$ is a stationary process and satisfies the following equation
\begin{align}\label{eq:ui}
	\sL u_i=\cP_i,
\end{align}
in the analytic weak sense, where $\cP_i$ is the tight limit of $\frac1{\sqrt N}  \sum_{j=1}^N \Wick{\Phi_j^2 \Phi_i}$.
\et
\begin{proof}
By Lemma \ref{th:m2} and Lemma \ref{lem:zin}, for every $T\geq0$ and sufficiently small $\kappa>0$,
\begin{align*}
	&\sum_{i=1}^k\E\|u_i^N\|_{L^2([0,T];\bC^{2\kappa})}^2+\sum_{i=1}^k\E\|u_i^N\|_{L^2([0,T];H^1)}^2
	\\&=\sum_{i=1}^kN\E\|Y_i\|_{L^2([0,T];\bC^{2\kappa})}^2+\sum_{i=1}^kN\E\|Y_i\|_{L^2([0,T];H^1)}^2\lesssim1,
\end{align*}
with the proportional constant independent of $N$.
By Proposition \ref{prop1} the RHS of \eqref{eq:u} satisfies the following uniform bound for any $\kappa>0$
\begin{align*}
	\sum_{i=1}^k\E\Big\|\frac1{\sqrt N}  \sum_{j=1}^N \Wick{\Phi_j^2 \Phi_i}\Big\|^2_{H^{-\kappa}}
	\lesssim 1,
\end{align*}
which implies
\begin{align*}
	\sum_{i=1}^k\E\Big\|\frac1{\sqrt N}  \sum_{j=1}^N \Wick{\Phi_j^2 \Phi_i}\Big\|^2_{L^2_TH^{-\kappa}}
	\lesssim 1.
\end{align*}
By the above two estimates we obtain tightness of
$$\Big((u_1^N,\dots,u_k^N),\Big(\frac1{\sqrt N}  \sum_{j=1}^N \Wick{\Phi_j^2 \Phi_1},\dots,\frac1{\sqrt N}  \sum_{j=1}^N \Wick{\Phi_j^2 \Phi_k}\Big)\Big)$$ in
$$\Big(L^2([0,T];\bC^\kappa)\cap L^2([0,T];H^\delta)\cap C([0,T];H^{-2\kappa})\Big)^k\times L_{w}^2([0,T];H^{-2\kappa})^k,$$
for every $T\geq0$, where $L_{w}^2$ means weak topology w.r.t. time. Hence, we obtain the desired tightness.
Denoting the tightness limit by $(u_i,\cP_i)$,  we can then take limit on both sides of \eqref{eq:u} to obtain \eqref{eq:ui}.
\end{proof}

We will now give the time marginal law of $\cP_i$,  i.e. 
the law of $\cP_i(t)$ for fixed time $t\geq0$.

\subsection{Marginal law of $\cP_i$}
In this section we consider the marginal law of $\cP_i$, i.e.  large $N$ limit of $\frac1{\sqrt N}\Wick{\Phi_1\PPhi^2}$, for any fixed time.
Similarly as in \eqref{ob0}-\eqref{ob2} we first use the stationary solutions $Z_i$ and $Y_i$ to \eqref{eq:li1} and \eqref{eq:22} to precisely define $\frac1{\sqrt N}\Wick{\Phi_1\PPhi^2}$ as
\begin{align*}
	\frac1{\sqrt N}\Wick{\Phi_i\PPhi^2}\eqdef\frac1{\sqrt N}\sum_{j=1}^N\Big(Y_iY_j^2+2Y_iY_jZ_j+Y_i\Wick{Z_j^2}+Y_j^2Z_i+2Y_j\Wick{Z_iZ_j}+\Wick{Z_iZ_j^2}\Big).
\end{align*}
In the following we set $i=1$ and focus on $\frac1{\sqrt N}\Wick{\Phi_1\PPhi^2}$. Define
\begin{equ}
	g_{k,\eps}^N(y_1,\dots,y_k)\eqdef \frac1{N^{k/2}}	\E \Big(\prod_{i=1}^k\Wick{\Phi_{1,\eps}\PPhi^2_\eps}(y_i)\Big),\quad y_i\in \Lambda_\eps, i=1,\dots,k.
\end{equ}

By symmetry it is easy to find that for $k\in 2\mN-1$
\begin{align*}
	g_{k,\eps}^N(y_1,\dots,y_k)\equiv0, \quad y_i\in \Lambda_\eps, i=1,\dots,k.
\end{align*}

\bp\label{bp:1}
For $k\in2\mN$, $y_i\in \Lambda_\eps, i=1,\dots,k$, one has
\begin{equ}\label{eq:gkn}
	g_{k,\eps}^N(y_1,\dots,y_k)= C_{k,\eps}(y_1,\dots,y_k) f_{k,\eps}^N(y_1,\dots,y_k)+O_{N,\eps}.
\end{equ}
Here $C_{k,\eps}$ is the $k$-point correlation of the Gaussian free field with covariance $C_\eps$,
$f_{k,\eps}^N$ is the $k$-point correlation of $\frac1{\sqrt N}\Wick{\PPhi^2_\eps}$ defined in Section~\ref{sec:4.1},
 and $O_{N,\eps}$ is defined in \eqref{def:ON} below.
\ep

\begin{proof} The idea is similar as in the proof Lemma \ref{lem:IBP}. We again omit $\eps$ in the proof.
	We choose
	$$F_1(\Phi)=\Wick{\PPhi^2}(y_1)\prod_{i=2}^k\Wick{\Phi_1\PPhi^2}(y_i)$$
	as test function in \eqref{e:IBP-CN} to have
	\begin{align}\label{es:1k}
		&\sum_{m=2}^kC(y_1-y_m)
		\E\Big(\Wick{\PPhi^2}(y_1)\Big(\Wick{\PPhi^2}(y_m)+2\Wick{\Phi^2_1}(y_m)\Big)\prod_{j=2,j\neq m}^k\Wick{\Phi_1\PPhi^2}(y_j)\Big)
		\\
		&=\E \Big(\prod_{i=1}^k\Wick{\Phi_1\PPhi^2}(y_i)\Big)+\frac1N\E\Big(F_1(\Phi)\cI(\Wick{\Phi_1\PPhi^2})(y_1)\Big),\label{es:2k}
	\end{align}
where we used
$$\frac{\delta\Wick{\Phi_1\PPhi^2}(y_m)}{\delta \Phi_1(y_m)}=(\Wick{\PPhi^2}+2\Wick{\Phi^2_1})(y_m)\frac{e_{y_m}}{\eps^2}$$
	to derive the terms in \eqref{es:1k}.  Note that since $\Wick{\Phi_1\PPhi^2}$ is not $O(N)$ invariant, we cannot replace $\Wick{\Phi^2_1}$ by $\frac1N\Wick{\PPhi^2}$.
	Define
	$$O_N^1\eqdef \text{ the second part in }\eqref{es:1k}-\text{ the second term in }\eqref{es:2k},$$
	where the second part in \eqref{es:1k} means the part involving $2\Wick{\Phi_1^2}$. 
	In Lemmas \ref{lem:ON}+\ref{lem:On} below we will prove that $\frac1{N^{k/2}}O_N^1$ is of the order $\frac1{\sqrt N}$. 
The above IBP formula shows that one can find $y_1,y_m$ such that the relevant $\Wick{\Phi_1\PPhi^2}$ are turned into $\Wick{\PPhi^2}$ and a kernel $C(y_1-y_m)$ is created: graphically, up to the remainder $\frac1{N^{k/2}}O_N^1$,
\begin{equ}
\begin{tikzpicture}[baseline=-15]
	\node[dot] (x1) at (0,0) {}; \node at (-0.4,0) {$y_1$};
	\node[dot] (x2) at (1,0) {}; \node at (1.4,0) {$y_m$};
	\node[dot] (x3) at (0,-1) {};
	\node[dot] (x4) at (1,-1) {};
	\draw[Phi] (x1) -- ++(-0.18,-0.18);\draw[Phi] (x1) -- ++(0.18,-0.18);\draw[Phi] (x1) -- ++(0,-0.25);
	\draw[Phi] (x2) -- ++(-0.18,-0.18);\draw[Phi] (x2) -- ++(0.18,-0.18);\draw[Phi] (x2) -- ++(0,-0.25);
	\draw[Phi] (x3) -- ++(-0.18,-0.18);\draw[Phi] (x3) -- ++(0.18,-0.18);\draw[Phi] (x3) -- ++(0,-0.25);
	\draw[Phi] (x4) -- ++(-0.18,-0.18);\draw[Phi] (x4) -- ++(0.18,-0.18);\draw[Phi] (x4) -- ++(0,-0.25);
\end{tikzpicture}
\qquad
\Rightarrow
\qquad
\begin{tikzpicture}[baseline=-15]
	\node[dot] (x1) at (0,0) {}; \node at (-0.4,0) {$y_1$};
	\node[dot] (x2) at (1,0) {}; \node at (1.4,0) {$y_m$};
	\node[dot] (x3) at (0,-1) {};
	\node[dot] (x4) at (1,-1) {};
	\draw[C] (x1) to (x2);
	\draw[Phi] (x1) -- ++(-0.18,-0.18);\draw[Phi] (x1) -- ++(0,-0.25);
	\draw[Phi] (x2) -- ++(0.18,-0.18);\draw[Phi] (x2) -- ++(0,-0.25);
	\draw[Phi] (x3) -- ++(-0.18,-0.18);\draw[Phi] (x3) -- ++(0.18,-0.18);\draw[Phi] (x3) -- ++(0,-0.25);
	\draw[Phi] (x4) -- ++(-0.18,-0.18);\draw[Phi] (x4) -- ++(0.18,-0.18);\draw[Phi] (x4) -- ++(0,-0.25);
\end{tikzpicture}
\end{equ}
	
	For the first term in \eqref{es:1k} involving only $\Wick{\PPhi^2}(y_m)$ we  choose the following  test function for a fixed $i\in \{2,\cdots,k\}\backslash\{m\}$
	$$
	F_2(\Phi)=\Wick{\PPhi^2}(y_1)
	\Wick{\PPhi^2}(y_m)\Wick{\PPhi^2}(y_i)
	\prod_{j=2, j\notin\{i,m\}}^k  \!\!\!\!\!\! \Wick{\Phi_1\PPhi^2}(y_j)
	$$
	 in \eqref{e:IBP-CN} to have 
	\begin{equs}[eq:cz1]
		{}&2C(y_i-y_m)\E\Big(\Wick{\PPhi^2}(y_1)\Phi_1(y_m)\Wick{\PPhi^2}(y_i)
		\prod_{j=2, j\notin \{i,m\}}^k \!\!\!\!\!\! \Wick{\Phi_1\PPhi^2}(y_j)\Big)
		\\
		&+2C(y_i-y_1)\E\Big(\Phi_1(y_1)\Wick{\PPhi^2}(y_m)\Wick{\PPhi^2}(y_i)
		\prod_{j=2,j\notin \{i,m\}}^k \!\!\!\!\!\! \Wick{\Phi_1\PPhi^2}(y_j)\Big)
		\\
		&+\sum_{j=2,j\notin \{i,m\}}^k  \!\!\!\!\! C(y_i-y_j)
		\E\Big(\prod_{\ell\in \{1,m,i\}} \!\!\!\! \Wick{\PPhi^2}(y_\ell)\Big(\Wick{\PPhi^2}(y_j)+2\Wick{\Phi^2_1}(y_j)\Big)\prod_{p\notin\{1,m,i,j\}}\!\!\!\!\Wick{\Phi_1\PPhi^2}(y_p)\Big)
		\\&=
		\E\Big(\Wick{\PPhi^2}(y_1)\Wick{\PPhi^2}(y_m)\prod_{j=2,j\neq m}^k \!\!\!\! \Wick{\Phi_1\PPhi^2}(y_j)\Big)+\frac1N\E\Big(F_2(\Phi)\cI(\Wick{\Phi_1\PPhi^2})(y_i)\Big).
	\end{equs}
	Define
	\begin{align*}
		O_{N}^2\eqdef &\sum_{m=2}^kC(y_1-y_m)\Big(\text{first two terms in \eqref{eq:cz1} }
		\\&+\text{the second part in the third term \eqref{eq:cz1} }-\text{last term in \eqref{eq:cz1}}\Big),
	\end{align*}
where the second part in the third term \eqref{eq:cz1} refers to the part involving $2\Wick{\Phi_1^2}$. 
	Again we will prove below that $\frac1{N^{k/2}}O_{N}^2$ is of order $\frac1{\sqrt N}$.
So we have found two more points $y_i,y_j$ such that the corresponding 
$\Wick{\Phi_1\PPhi^2}$ are turned into $\Wick{\PPhi^2}$ and $C(y_i-y_j)$ is created: namely, up to the remainder $\frac1{N^{k/2}} O_{N}^2$,
\begin{equ}
\begin{tikzpicture}[baseline=-15]
	\node[dot] (x1) at (0,0) {}; \node at (-0.4,0) {$y_1$};
	\node[dot] (x2) at (1,0) {}; \node at (1.4,0) {$y_m$};
	\node[dot] (x3) at (0,-1) {}; \node at (-0.4,-1) {$y_i$};
	\node[dot] (x4) at (1,-1) {}; \node at (1.4,-1) {$y_j$};
	\draw[C] (x1) to (x2);
	\draw[Phi] (x1) -- ++(-0.18,-0.18);\draw[Phi] (x1) -- ++(0,-0.25);
	\draw[Phi] (x2) -- ++(0.18,-0.18);\draw[Phi] (x2) -- ++(0,-0.25);
	\draw[Phi] (x3) -- ++(-0.18,-0.18);\draw[Phi] (x3) -- ++(0.18,-0.18);\draw[Phi] (x3) -- ++(0,-0.25);
	\draw[Phi] (x4) -- ++(-0.18,-0.18);\draw[Phi] (x4) -- ++(0.18,-0.18);\draw[Phi] (x4) -- ++(0,-0.25);
\end{tikzpicture}
\qquad
\Rightarrow
\qquad
\begin{tikzpicture}[baseline=-15]
	\node[dot] (x1) at (0,0) {}; \node at (-0.4,0) {$y_1$};
	\node[dot] (x2) at (1,0) {}; \node at (1.4,0) {$y_m$};
	\node[dot] (x3) at (0,-1) {}; \node at (-0.4,-1) {$y_i$};
	\node[dot] (x4) at (1,-1) {}; \node at (1.4,-1) {$y_j$};
	\draw[C] (x1) to (x2);\draw[C] (x3) to (x4);
	\draw[Phi] (x1) -- ++(-0.18,-0.18);\draw[Phi] (x1) -- ++(0,-0.25);
	\draw[Phi] (x2) -- ++(0.18,-0.18);\draw[Phi] (x2) -- ++(0,-0.25);
	\draw[Phi] (x3) -- ++(-0.18,-0.18);\draw[Phi] (x3) -- ++(0,-0.25);
	\draw[Phi] (x4) -- ++(0.18,-0.18);\draw[Phi] (x4) -- ++(0,-0.25);
\end{tikzpicture}
\end{equ}
	Combining the above calculation we find 
	\begin{align*}
		\E \Big(\prod_{n=1}^k\Wick{\Phi_1\PPhi^2}(y_n)\Big)
		=&\sum_{\substack{m,j=2 \\ i,j,m \mbox{ \tiny distinct}}}^k  C(y_1-y_m)C(y_i-y_j)
		\\
		&\E\Big(\prod_{\ell\in \{1,m,i,j\}}\!\!\!\!\! \Wick{\PPhi^2}(y_\ell)\prod_{p\notin\{1,m,i,j\}} \!\!\!\!\! \Wick{\Phi_1\PPhi^2}(y_p)\Big)
		+\sum_{q=1}^2O_{N}^q.
	\end{align*}
	
In general we  have the following term for $2\leq \ell\leq k-2$, $\ell\in 2\mN$,
	\begin{align*}
		I\eqdef	\E\Big(\prod_{m=1}^\ell\Wick{\PPhi^2}(y_{\pi(m)})\prod_{j=\ell+1}^k\Wick{\Phi_1\PPhi^2}(y_{\pi(j)})\Big),
	\end{align*}
	with $\pi$ being a permutation of $\{1,\dots,k\}$. We then choose
	\begin{align*}
		F_3(\Phi)=\prod_{m=1}^{\ell+1}\Wick{\PPhi^2}(y_{\pi(m)})\prod_{j=\ell+2}^k\Wick{\Phi_1\PPhi^2}(y_{\pi(j)})
	\end{align*}
	as the test function $F$ and $y_{\pi(\ell+1)}$ as $x$ in \eqref{e:IBP-CN}  to obtain
	\begin{equation}\label{eq:cz2}
		\aligned
		&2\sum_{p=1}^\ell C(y_{\pi(\ell+1)}-y_{\pi(p)})\E\Big(\Phi_1(y_{\pi(p)})\prod_{m=1,m\neq p}^{\ell+1}\Wick{\PPhi^2}(y_{\pi(m)})\prod_{j=\ell+2}^k\Wick{\Phi_1\PPhi^2}(y_{\pi(j)})\Big)
		\\&+\sum_{p=\ell+2}^k C(y_{\pi(\ell+1)}-y_{\pi(p)})\E\Big(\prod_{m=1}^{\ell+1}\Wick{\PPhi^2}(y_{\pi(m)})
		\Big(\Wick{\PPhi^2}(y_{\pi(p)})+2\Wick{\Phi^2_1}(y_{\pi(p)})\Big)
		\\&
		\qquad\qquad\qquad\qquad\qquad\qquad
		\times\prod_{j=\ell+2,j\neq p}^k\Wick{\Phi_1\PPhi^2}(y_{\pi(j)})\Big)\\&=
		I+\frac1N\E\Big(F_3(\Phi)\cI(\Wick{\Phi_1\PPhi^2})(y_{\pi(\ell+1)})\Big).
		\endaligned
	\end{equation}
The first term, the part involving $2\Wick{\Phi^2_1}(y_{\pi(p)})$ in the second term and the last term in \eqref{eq:cz2} together is defined as $O_{N}^{\frac\ell2+1}$. The first part in the second term can be viewed as choosing two points $y_{\pi(\ell+1)}$ and $y_{\pi(p)}$, turning the corresponding $\Wick{\Phi_1\PPhi^2}$ into $\Wick{\PPhi^2}$ and adding $C(y_{\pi(\ell+1)}-y_{\pi(p)})$.
	We iterate the above procedure and obtain
		\begin{equ}
		\frac1{N^{k/2}}\E \Big(\prod_{i=1}^k\Wick{\Phi_1\PPhi^2}(y_i)\Big)
		=\frac1{N^{k/2}}\Big(\sum_\pi\prod_{j=1}^{k/2}C(y_{\pi(2j-1)}-y_{\pi(2j)})\Big)
		\E\Big(\prod_{i=1}^k\Wick{\PPhi^2}(y_i)\Big)+O_{N},
	\end{equ}
	where $\pi$ runs over all the pairing permutations of $\{1,\dots,k\}$ and
	\begin{align}\label{def:ON}
		O_{N}\eqdef\sum_{i=1}^{k/2}\frac1{N^{k/2}}O_{N}^i.
	\end{align}
	Noting that 
	$$
	C_k(y_1,\cdots,y_k) \eqdef \sum_\pi\prod_{j=1}^{k/2}C(y_{\pi(2j-1)}-y_{\pi(2j)})
	$$ 
	gives the $k$-point function of Gaussian free field with covariance $C$, the result follows.
\end{proof}

For $k\in\mN$ we  define the $k$-point function for  $\frac1{\sqrt{N}}\Wick{\Phi_1\PPhi^2}$:
\begin{align*}
	\<g_k^N,\varphi\>=\lim_{\eps\to0}\int \E\Big(\prod_{i=1}^k\cE^\eps\frac{1}{\sqrt N}\Wick{\Phi_{1,\eps}\PPhi^2_\eps}(y_i)\Big)\varphi(y_1,\dots,y_k)\prod_{i=1}^k\dif y_i.
\end{align*}
for every $\varphi\in \cS(\mT^{2k})$. Since for fixed $N$ the law of $\frac{1}{\sqrt N}\Wick{\Phi_1\PPhi^2}$ is uniquely determined, we have
\begin{align*}
	\<g_k^N,\otimes_{i=1}^k\varphi_i\>= \E\prod_{i=1}^k\Big\<\frac{1}{\sqrt N}\Wick{\Phi_1\PPhi^2},\varphi_i\Big\>,
\end{align*}
for $\varphi_i\in \cS(\mT^2)$, $i=1,\dots, k$.
Now we send $\eps\to0$ on the both sides of \eqref{eq:gkn}, and write $g_{k}^N,C_{k},f_{k}^N,O_{N}$ for the limits
of the terms appearing therein.
 The same argument as in the proof Lemma \ref{le:fkN} implies the following result which provides more explicit expressions for $O_N$.

\bl\label{lem:ON} 
It holds that $g_{2m-1}^N\equiv0, m\in \mN$. For $k\in 2\mN$, $\varphi_i\in \cS(\mT^2)$, one has 
\begin{align}\label{e:gkn}
	\<g_{k}^N,\otimes_{i=1}^k \varphi_i\>= \<C_{k}f_{k}^N,\otimes_{i=1}^k \varphi_i\>+\<O_{N},\otimes_{i=1}^k \varphi_i\>, \end{align}
where $O_N=\sum_{i=1, i \in 2\mN}^{k}\frac1{N^{k/2}}O_N^{i/2}$.  Here, $O_N^{1}=O_N^{1,1}+O_N^{1,2}$ with
\begin{align*}
	\<O_N^{1,1},\otimes_{i=1}^k \varphi_i\>
	&\eqdef 2\E\Big(\<\Wick{\PPhi^2}\cI(\Wick{\Phi_1^2}\varphi_m),\varphi_1\>\prod_{j=2,j\neq m}^k\<\Wick{\Phi_1\PPhi^2},\varphi_{j}\>\Big),
\\
	\<O_N^{1,2},\otimes_{i=1}^k \varphi_i\>
	&\eqdef-\frac1N \E\Big(\<\Wick{\PPhi^2}\cI(\Wick{\Phi_1\PPhi^2}),\varphi_1\>\prod_{j=2}^k\<\Wick{\Phi_1\PPhi^2},\varphi_{j}\>\Big).
\end{align*}
Moreover, for $2\leq \ell\leq k-2$, $\ell\in 2\mN$,  $O_N^{\frac\ell2+1}=\sum_{i=1}^3O_N^{\frac\ell2+1,i}$ with
\begin{align*}
	\<O_N^{\frac\ell2+1,1},\otimes_{i=1}^k \varphi_i\>
	&\eqdef
	2\sum_{\pi} \E\Big(\<\cI(\Wick{\PPhi^2}\varphi_{\pi(2)})\cI(\Wick{\PPhi^2}\varphi_{\pi(3)})\Phi_1,\varphi_{\pi(1)}\>
	\\&\qquad \times
	\prod_{i=2}^{\ell/2}\<\cI(\Wick{\PPhi^2}\varphi_{\pi(2i)})\Wick{\PPhi^2},\varphi_{\pi(2i+1)}\>\prod_{j=\ell+2}^k\<\Wick{\Phi_1\PPhi^2},\varphi_{\pi(j)}\>\Big),
\end{align*}
\begin{align*}
	\<O_N^{\frac\ell2+1,2},\otimes_{i=1}^k \varphi_i\>
	&\eqdef
	2\sum_{\pi}\sum_{p=\ell+2}^k \E\Big(
	\prod_{i=1}^{\ell/2}\<\cI(\Wick{\PPhi^2}\varphi_{\pi(2i)})\Wick{\PPhi^2},\varphi_{\pi(2i-1)}\>
	\\
	&\qquad\times\<\cI(\Wick{\Phi_1^2}\varphi_{\pi(p)})\Wick{\PPhi^2},\varphi_{\pi(\ell+1)}\>
	\prod_{j=\ell+2,j\neq p}^k\<\Wick{\Phi_1\PPhi^2},\varphi_{\pi(j)}\>\Big),
\end{align*}
and
\begin{align*}
	\<O_N^{\frac\ell2+1,3},\otimes_{i=1}^k \varphi_i\> 
	& \eqdef-\frac1N\sum_{\pi} \E\Big(\<\cI(\Wick{\Phi_1\PPhi^2})\Wick{\PPhi^2},\varphi_{\pi(1)})\>
	\\
	&\qquad\times\prod_{i=1}^{\ell/2}\<\cI(\Wick{\PPhi^2}\varphi_{\pi(2i)})\Wick{\PPhi^2},\varphi_{\pi(2i+1)}\>\prod_{j=\ell+2}^k\<\Wick{\Phi_1\PPhi^2},\varphi_{\pi(j)}\>\Big).
\end{align*}
Here each $\pi$ is a  permutation of $\{1,\dots,k\}$ modulo possibly swapping the values of $\pi(j)$ in each $p_i, i=0, \dots,\ell/2+1$ with $p_0=\{\pi(1)\}$,    $p_i=\{\pi(2i),\pi(2i+1)\}$ for $i=1,\dots,\ell/2$ and $p_{\ell/2+1}=\{\pi(\ell+2),\dots,\pi(k)\}$. The sums above run  over all such permutations. 
\el

Now we show that $O_N$ is of order $\frac1{\sqrt N}$. Recall our notation $B_i$ in \eqref{no:B}.

\bl\label{lem:On} 
Let $\m$ be as in Lemma \ref{th:m2}.  For $\varphi_i\in \cS(\mT^2)$ it holds that
$$|\<O_N,\otimes_{i=1}^k\varphi_i\>|\lesssim \frac1{\sqrt N}.$$
\el

\begin{proof}
	Using Lemma \ref{lem:zin} we have for $\ell\geq1$ 
\begin{align*}
	\E\|\Wick{\Phi_1^2}\|_{H^{-\kappa}}^\ell\lesssim 1.
\end{align*}
	Hence, by Proposition \ref{prop1},
\begin{align*}
	\frac1{N^{k/2}}|\<O_N^{1,1},\otimes_{i=1}^k \varphi_i\>|\lesssim	\frac1{N^{k/2}}\E \Big(\|\Wick{\Phi_1^2}\|_{H^{-\kappa}}B_2B_3^{k-2}\Big)\lesssim \frac1{N^{1/2}}.
\end{align*}
	For $O_N^{1,2}$, Lemma \ref{lem:multi} and  \eqref{eq:sch} imply that
	\begin{align*}
		|\<\Wick{\PPhi^2}\cI(\Wick{\Phi_1\PPhi^2}),\varphi_1\>|\lesssim B_2B_3.
	\end{align*}
	Hence, by Proposition \ref{prop1},
	\begin{align*}
		\frac1{N^{k/2}}|\<O_N^{1,2},\otimes_{i=1}^k \varphi_i\>|\lesssim	\frac1{N^{k/2+1}}\E (B_2B_3^k)\lesssim \frac1{N^{1/2}}.
	\end{align*}
	Similarly we use Lemma \ref{lem:multi} and \eqref{es} to have
	\begin{align*}
		|\<\cI(\Wick{\PPhi^2}\varphi_{\pi(2)})\cI(\Wick{\PPhi^2}\varphi_{\pi(3)})\Phi_1,\varphi_{\pi(1)}\>|\lesssim B_1B_2^2,
	\end{align*}
	and
	\begin{equ}
		|\<\cI(\Wick{\PPhi^2}\varphi_{\pi(2i)})\Wick{\PPhi^2},\varphi_{\pi(2i+1)}\>|\lesssim B_2^2, \qquad
		|\<\cI(\Wick{\Phi_1\PPhi^2})\Wick{\PPhi^2},\varphi_{\pi(1)})\>|\lesssim B_2B_3,
	\end{equ}
	which combined with Proposition \ref{prop1} implies that for $\ell\geq2$
	\begin{equs}
		\frac1{N^{k/2}} & |\<O_N^{\frac\ell2+1},\otimes_{i=1}^k \varphi_i\>|
		\lesssim \frac1{N^{k/2}}\E( B_1B_2^\ell B_3^{k-\ell-1})
		\\
		&+\frac1{N^{k/2}}\E\Big(\|\Wick{\Phi_1^2}\|_{H^{-\kappa}} B_2^{\ell+1} B_3^{k-\ell-2}\Big)
		+	\frac1{N^{k/2+1}}\E( B_2^{\ell+1}B_3^{k-\ell})
		\lesssim \frac1{N^{1/2}}.
	\end{equs}
\end{proof}

\bt\label{th:3} 
Let $\m$ as in Lemma \ref{th:m2}. 
The marginal law of $\cP_i, i=1,\dots,k,$ is given by  the random field $X_1\cQ$, where $X_1$ and $\cQ$ are independent, 
$X_1=^dZ$, and $\cQ$ is the Gaussian field obtained in Theorem \ref{th:1}. Here, 
$$
X_1\cQ\eqdef \lim_{\eps\to0} \Big( (\rho_\eps*X_1) \cdot (\rho_\eps*\cQ)\Big)
$$  
where $\rho_\eps$ is a smooth mollifier
 as in Proposition \ref{pro:1}, and the limit is in $\bC^{-\kappa}$ $\bP$-a.s..
\et
\begin{proof}
Proposition \ref{prop1} implies tightness of $\{\frac1{\sqrt N}\Wick{\Phi_i\PPhi^2}\}_N$ in $H^{-\kappa}$, $\kappa>0$. Now the result follows by taking limit on both sides of \eqref{e:gkn} in Lemma~\ref{lem:ON} and applying Lemma \ref{lem:On} and the fact that $C_kf_k$ gives the $k$-point function of $X_1\cQ$.
\end{proof}

In the following we consider marginal law of $(\cP_1,\dots,\cP_m)$ for $m\in\mN$.
For $\mathbf{k}=(k_1,\dots,k_m)\subset \mN^m$, write
\begin{equ}
	g_{\mathbf{k},\eps}^N(y_1^1,\dots,y_{k_m}^m)\eqdef \frac1{N^{\sum_{j=1}^mk_j/2}}	\E \Big(\prod_{j=1}^m\prod_{i=1}^{k_j}\Wick{\Phi_{j,\eps}\PPhi^2_\eps}(y_i^j)\Big),
\end{equ}
$y_i^j\in \Lambda_\eps, i=1,\dots,k_j, j=1,\dots,m$.

By symmetry it is easy to find that for  $\mathbf{k}\subset \mN^m$ with $k_j\in 2\mN-1$ for some $j$
\begin{align*}
	g_{\mathbf{k},\eps}^N\equiv0.
\end{align*}

Similar as in the proof of Proposition \ref{bp:1} we derive the following results.

\bp Let $\m$ be as in Lemma \ref{th:m2}. 
It holds that for $\mathbf{k}\subset \mN^m$ with $k_\ell\in2\mN$ for each $\ell=1,\dots,m$
\begin{align}\label{eq:gknn}
	g_{\mathbf{k},\eps}^N(y_1^1,\dots,y_{k_m}^m)= \Big(\prod_{j=1}^mC_{k_j,\eps}(y_1^j,\dots,y_{k_j}^j)\Big)f_{\sum_{j=1}^mk_j,\eps}^N(y_1^1,\dots,y_{k_m}^m)+O_{N,\eps}^{\mathbf{k}},
\end{align}
$y_i^j\in \Lambda_\eps, i=1,\dots,k_j, j=1,\dots,m$, where  $O_{N,\eps}^{\mathbf{k}}$ is of order $\frac1{\sqrt N}$.
\ep
\begin{proof}
	The proof is similar as that of Proposition  \ref{bp:1} so we only point out the difference.  Omit $\eps$ in the proof.
	We choose
	$$F_1(\Phi)=\Wick{\PPhi^2}(y_1^1)\Big(\prod_{i=2}^{k_1}\Wick{\Phi_1\PPhi^2}(y_i^1)\Big)\prod_{j=1}^m\prod_{i=1}^{k_j}\Wick{\Phi_{j}\PPhi^2}(y_i^j)$$
	as test function in \eqref{e:IBP-CN} to have similar terms as in \eqref{es:1k} and \eqref{es:2k}. The main difference is that we need to consider  $\frac{\delta\Wick{\Phi_{j}\PPhi^2}}{\delta \Phi_1}$, which gives $2\Wick{\Phi_1\Phi_j}$ for $j\neq1$. Since $$\Wick{\Phi_1\Phi_j}=Y_1Y_j+Y_1Z_j+Z_1Y_j+\Wick{Z_1Z_j},$$
	by Lemma \ref{lem:zin} we have for $\ell\geq1$
	\begin{align*}
		\E\|\Wick{\Phi_1\Phi_j}\|_{H^{-\kappa}}^\ell\lesssim1,
	\end{align*}
	 which combined with Proposition \ref{prop1} and similar calculation as in the proof of Lemma \ref{lem:On} implies that the part including $2\Wick{\Phi_1\Phi_j}$ for $j\neq 1$ corresponds to the term of order $\frac1{\sqrt N}$. Hence, after $k_1/2$ steps, we get
	\begin{align*}
		&g_{\mathbf{k},\eps}^N(y_1^1,\dots,y_{k_m}^m)
		\\&= C_{k_1,\eps}(y_1^1,\dots,y_{k_1}^1)\frac1{N^{\sum_{j=1}^mk_j/2}}	\E \Big(\prod_{\ell=1}^{k_1}\Wick{\PPhi^2}(y_\ell^1)\prod_{j=2}^m\prod_{i=1}^{k_j}\Wick{\Phi_{j}\PPhi^2}(y_i^j)\Big)+O(\frac1{\sqrt N}).
	\end{align*}
	Compared to the definition of $g_{\mathbf{k},\eps}^N$, we change all the $\Wick{\Phi_1\PPhi^2}$ to $\Wick{\PPhi^2}$. We then repeat the above procedures for each $\Wick{\Phi_j\PPhi^2}$ and the result follows.
\end{proof}

Hence, as in the proof of Theorem \ref{th:3} we derive the following result.

\bt\label{th:4} Let $\mm$ as in Lemma \ref{th:m2}. For $m\in\mN$, the marginal law of $(\cP_1,\dots,\cP_m)$ are given by  the random field $$(X_1\cQ,X_2\cQ,\dots,X_m\cQ),$$
 where $X_1, \dots, X_m, \cQ$ are independent and $X_i=^dZ, i=1,\dots,k,$ and  $\cQ$ is the Gaussian field obtained in Theorem \ref{th:1}.
\et

 \appendix
\renewcommand{\appendixname}{Appendix~\Alph{section}}
\renewcommand{\theequation}{A.\arabic{equation}}
%\section{Appendix}

\section{Besov spaces}\label{A.1}
%In this appendix we summarize some basic results on Besov spaces on $\mT^d$ from \cite{Tri78}, as well as Besov spaces on the lattice $\Lambda_{\eps}=\eps \mZ^d \cap\mT^d$ where $\eps=2^{-N}, N\in \mN\cup\{0\},$ from \cite{MP19,GH18a}.  
In this appendix we summarize some basic results regarding Besov spaces on the continuum spaces as well as on lattices.  

\subsection{Besov spaces on $\mT^d$}

Let $(\Delta_{i})_{i\geq -1}$ be the Littlewood--Paley blocks for a dyadic partition of unity.
Besov spaces on $\mT^d$ with indices $\alpha\in \R$, $p,q\in[1,\infty]$ are defined as
the completion of $C^\infty(\mT^d)$ with respect to the norm
$$
\|u\|_{B^\alpha_{p,q}}:=
\Big(\sum_{j\geq-1}(2^{jq\alpha}\|\Delta_ju\|_{L^p}^q)\Big)^{1/q},
$$
and the H\"{o}lder-Besov space $\bC^\alpha$ is given by $\bC^\alpha=B^\alpha_{\infty,\infty}$.  We write $\|\cdot\|_{\bC^\alpha}\eqdef\|\cdot\|_{B^\alpha_{\infty,\infty}}$.

The following embedding results are frequently used. % (e.g.  \cite{Tri78}).
Recall the spaces $H_p^s$ defined in Section~\ref{sec:intro}.

\bl\label{lem:emb} (i) Let $1\leq p_1\leq p_2\leq\infty$ and $1\leq q_1\leq q_2\leq\infty$, and let $\alpha\in\mathbb{R}$. Then $B^\alpha_{p_1,q_1} \subset B^{\alpha-d(1/p_1-1/p_2)}_{p_2,q_2}$. (cf. \cite[Lemma~A.2]{GIP15})

(ii) Let $s\in \R$, $1<p<\infty$, $\epsilon>0$. Then $H^s_2=B^s_{2,2}$, and
$B^s_{p,1}\subset H^{s}_p\subset B^{s}_{p,\infty}\subset B^{s-\epsilon}_{p,1}$. (cf. \cite[Theorem 4.6.1]{Tri78})

% \hao{reference for $H^{s+\epsilon}_p\subset B^{s}_{p,1}$?}

%(iii) $B^\alpha_{p_1,q_1}\subset B^\beta_{p_2,q_2}$ if $\beta\leq \alpha, p_2\leq p_1$ and $q_2\geq q_1$. \hao{reference? do we ever need this?} \scott{Step 4 of Theorem 5.1 uses the embedding of $H^{-s}$ into $H^{-1}$ }

Here  $\subset$ means  continuous and dense embedding.
\el

%We recall the following interpolation inequality and  multiplicative inequality for the elements in $H^s_p$:
%
%\bl\label{lem:interpolation}
%(i)  Suppose that $s\in (0,1)$ and $p\in (1,\infty)$. Then for $f\in H^1_p$
%$$\|f\|_{H^s_p}\lesssim \|f\|_{L^p}^{1-s}\|f\|_{H^1_p}^s.$$
%(cf. \cite[Theorem 4.3.1]{Tri78})
%
%
%(ii) Suppose that $s>0$ and $p\in [1,\infty)$. Then
%\begin{equation}\label{e:Lambda-prod}
%	\|\Lambda^s(fg)\|_{L^p}\lesssim\|f\|_{L^{p_1}}\|\Lambda^sg\|_{L^{p_2}}+\|g\|_{L^{p_3}}
%	\|\Lambda^sf\|_{L^{p_4}},
%\end{equation}
%with $p_i\in (1,\infty], i=1,...,4$ such that
%$$\frac{1}{p}=\frac{1}{p_1}+\frac{1}{p_2}=\frac{1}{p_3}+\frac{1}{p_4}.$$
%(cf. see \cite[Theorem~1]{MR3200091})
%
%(iii) (Gagliardo-Nirenberg inequality) For $s\in [0,1)$, $\alpha\in (0,1)$, $r\geq1$,
%\begin{equ}[e:Gagliardo]
%	\|u\|_{H^s_q}\lesssim \|u\|_{H^1}^\alpha\|u\|_{L^r}^{1-\alpha}
%\end{equ}
%with
%$\frac{1}{q}=\frac{s}{d}+\alpha(\frac12-\frac1d)+\frac{1-\alpha}{r}$.

%(iii) Suppose that $s>0$ and $p\in (1,\infty)$. It holds that
%$$\|\Lambda^s(fg)-f\Lambda^sg\|_{L^p}\lesssim\|\nabla f\|_{L^{p_1}}\|g\|_{H^{s-1}_{p_2}}+\|g\|_{L^{p_4}}
%\|\Lambda^sf\|_{L^{p_3}},$$
%with $p_i\in (1,\infty], i=1,...,4$ such that
%$$\frac{1}{p}=\frac{1}{p_1}+\frac{1}{p_2}=\frac{1}{p_3}+\frac{1}{p_4}.$$
%\el

\bl\label{lem:multi}
 Let $\alpha,\beta\in\mathbb{R}$ and $p, p_1, p_2, q\in [1,\infty]$ be such that $\frac{1}{p}=\frac{1}{p_1}+\frac{1}{p_2}$.
The bilinear map $(u, v)\mapsto uv$
extends to a continuous map from ${B}^\alpha_{p_1,q}\times {B}^\beta_{p_2,q}$ to ${B}^{\alpha\wedge\beta}_{p,q}$  if $\alpha+\beta>0$. (cf. \cite[Corollary~2]{MW17})

%(ii) (Duality.) Let $\alpha\in (0,1)$, $p,q\in[1,\infty]$, $p'$ and $q'$ be their conjugate exponents, respectively. Then the mapping  $(u, v)\mapsto \<u,v\>=\int uv \dif x$  extends to a continuous bilinear form on $B^\alpha_{p,q}\times B^{-\alpha}_{p',q'}$, and one has $|\<u,v\>|\lesssim \|u\|_{B^\alpha_{p,q}}\|v\|_{B^{-\alpha}_{p',q'}}$ (cf.  \cite[Proposition~7]{MW17}).
\el

%In the sequel, whenever $\<f,g\>$ is well-defined we will, with a slight abuse of notation, just write it as $\int fg\,\dif x$, since in the context it will be clear that one of $f,g$ is distribution.\hao{I added this notation remark.} \scott{Probably a good idea: btw, derivative of the energy identity is not quite rigorous for a similar reason }
We recall the following  smoothing effect of the heat flow $S_t=e^{t(\Delta-\mm)}$, $\mm\geq0$ (e.g. \cite[Lemma~A.7]{GIP15}, \cite[Proposition~5]{MW17}).
\vskip.10in
\bl\label{lem:heat}  Let $u\in B^{\alpha}_{p,q}$ for some $\alpha\in \mathbb{R}, p,q\in [1,\infty]$. Then for every $\delta\geq0$ and $t\in [0,T]$
$$\|S_tu\|_{B^{\alpha+\delta}_{p,q}}\lesssim t^{-\delta/2}\|u\|_{B^{\alpha}_{p,q}}.$$
%where the proportionality constant is  independent of $t$.
\el

\bl\label{lem:sch} Let $I(f)(t)\eqdef\int_0^tS_{t-r}f\dif r$. Then for any $p\geq1,\alpha\in\mR,\beta\in(0,2)$
$$\|I(f)\|_{L^p_T\bC^{\alpha+\beta}}\lesssim \|f\|_{L^p_T\bC^{\alpha}}\;.$$
\el
\begin{proof} We use Lemma \ref{lem:heat} and H\"older's inequality to have
	\begin{align*}
	\|I(f)\|^p_{L^p_T\bC^{\alpha+\beta}}
	&\lesssim
	\int_0^T\Big(\int_0^t (t-s)^{-\beta/2}\|f\|_{\bC^\alpha}\dif s \Big)^p\dif t
	\lesssim_T
	\int_0^T\int_0^t (t-s)^{-\beta/2}\|f\|^p_{\bC^\alpha}\dif s \dif t
	\\
	&=\int_0^T\int_s^T (t-s)^{-\beta/2}\dif t\|f\|^p_{\bC^\alpha} \dif s
	\lesssim \|f\|^p_{L^p_T\bC^{\alpha}}
	\end{align*}
	where we used Fubini's Theorem in the third step.
\end{proof}

The following result can be proved similarly as in \cite[Lemma A.2]{SZZ21}.

\bl\label{lem:el}
 For $\alpha\in \mR, p,q\in [1,\infty]$, $d=2$ and the operators $\cI,\cI_1$ introduced in \eqref{def:I} one has
\begin{equs}
\|\cI f\|_{H^{\alpha+2}} &\lesssim \|f\|_{H^{\alpha}},		\label{eq:sch}
\\
\| \cI_1 f\|_{H^{\alpha+2-\gamma}} &\lesssim \|f\|_{H^{\alpha}}, \quad \forall\gamma>0.	\label{eq:sch2}
\end{equs}
%\begin{align}\label{eq:sch1}\Big\| \int_{\Lambda_{M,\eps}} C_{M,\eps}(\cdot-z)C_{M,\eps}(\cdot-z)f(z)\dif z\Big\|_{\bC^{\alpha+2-\kappa,\eps}(\rho)}\lesssim \|f\|_{\bC^{\alpha,\eps}(\rho)}.\end{align}
\el
\begin{proof}
	 The first result follows from the Fourier transform  $\widehat{C}(k)=\frac1{\m+4\pi^2|k|^2}, k\in \mZ^2$ and the definition of Sobolev space. The second result follows from
$\widehat{C^2}(k)=\widehat{C}*\widehat{C}(k),k\in\mZ^2.$
\end{proof}

The following result is the regularity estimate for the opearator $K$ introduced in \eqref{def:K}.

\bl\label{lem:K} For $f\in C^\infty(\mT^2)$, $d=2$
\begin{align*}
	Kf(x)=	\int_{\mathbb{R}^2} K(x-y)f(y)\dif y, \quad K(x-y)=\delta(x-y)+L(x-y),\quad x,y\in\mT^2,
\end{align*}
with $L\in L^p(\mathbb{R}^2), p\geq 2$, where we view $f$ as periodic function on $\mathbb{R}^2$.
Furthermore, it holds that for $\gamma\in\mR$, $\delta>0$
\begin{align*}
	\|Kf\|_{H^\gamma}\lesssim \|f\|_{H^\gamma},\quad	\|Lf\|_{H^{2+\gamma-\delta}}\lesssim \|f\|_{H^{\gamma}},
\end{align*}
where $Lf(x)=\int L(x-y)f(y)\dif y$.
\el
\begin{proof}
	The proof follows from Fourier transform (see also \cite[Lemma 2]{MR578040}). In the proof we will view the kernel $K$ as a function on $\mR^2$ and $f$ periodic function on $\mR^2$. We have for $k\in\mR^2$
	$$0\leq\cF_{\mR^2} K(k)=\frac1{1+\cF_{\mR^2} {C^2}(k)}\leq 1,$$
	where $\cF_{\mR^2}$ denotes the Fourier transform on $\mR^2$. 
	Thus $\|Kf\|_{H^\gamma}\lesssim \|f\|_{H^\gamma}$. It is easy to see that for $k\in\mR^2$
	$$\cF_{\mR^2}{L}(k)=-\frac{\cF_{\mR^2} {C^2}(k)}{1+\cF_{\mR^2} {C^2}(k)},$$
	which implies for any $\delta>0$
	$$|\cF_{\mR^2}{L}(k)|\lesssim |k|^{-2+\delta}\wedge1.$$
	Then we obtain $L\in L^p$ for any $p\geq2$ by Hausdorff--Young's inequality and the second estimate follows.
\end{proof}

\subsection{Besov spaces on discrete torus}
We also introduce  Besov spaces on the lattice $\Lambda_{\eps}=\eps \mZ^d\cap \mT^d$ where $\eps=2^{-N}, N\in \mN\cup\{0\}.$ We can view functions on the lattice  $\Lambda_{\eps}$ as periodic functions on $\eps \mathbb{Z}^d$. The definition of Besov spaces on the lattice $\eps \mathbb{Z}^d$ is from \cite{MP19,GH18a}. For $f\in L^{1,\eps}(\eps \mathbb{Z}^d)$ and $g\in L^{1}(\eps^{-1}\mT^d)$ we define the Fourier and the inverse Fourier transform as
$$\cF f(\xi)=\eps^d\sum_{x\in \eps \mZ^d}f(x)e^{-2\pi \iota \xi\cdot x},\quad \cF^{-1} g(x)=\int_{\eps^{-1}\mT^d} g(\xi)e^{2\pi \iota \xi\cdot x}\dif \xi,$$
for $\xi\in \eps^{-1}\mT^d, x\in \eps\mZ^d$, where 
$\|f\|_{ L^{1,\eps}(\eps\mZ^d)}=\eps^d\sum_{x\in\eps \mZ^d}|f(x)|$.
When we write  $\eps=0$  we refer to the quantities in the continuous setting with $\cF_{\mR^d}$ and $\cF_{\mR^d}^{-1}$ being the usual Fourier transform and its inverse on $\mR^d$.
Let $(\varphi_j)_{j\geq-1}$ be a dyadic partition of unity on $\mathbb{R}^d$. We define the dyadic partition of unity for $x\in \eps^{-1}\mathbb{T}^d$:
\begin{align}\label{dyadic}\varphi_j^\eps(x)=\begin{cases}\varphi_j(x), \qquad & j<j_\eps,\\
		1-\sum_{j<j_\eps}\varphi_j(x), \qquad & j=j_\eps.
	\end{cases}
\end{align}
Here $j_\eps:=\inf\{ j:\textrm{supp} \varphi_j\cap \partial (\eps^{-1}\mathbb{T}^d)\neq \emptyset\}$.

Now we define the Littlewood-Paley blocks for distributions on $\Lambda_{\eps}$ by
$$\Delta_j^\eps f=\cF^{-1}(\varphi_j^\eps\cF f),$$
where $f:\Lambda_\eps\to\mR$ is viewed as periodic functions on $\eps\mZ^d$. This
 leads to the definition of Besov spaces. For $\alpha\in \mR, p, q\in [1,\infty]$ and $\eps\in [0,1]$ we define the  Besov spaces on $\Lambda_{\eps}$ given by the norm
$$\|f\|_{B^{\alpha,\eps}_{p,q}}=\Big(\sum_{-1\leq j\leq j_\eps}2^{\alpha jq}\|\Delta_j^\eps f\|_{L^{p,\eps}}^q\Big)^{1/q}<\infty,$$
where $\|f\|_{ L^{p,\eps}}=(\eps^d\sum_{x\in\Lambda_\eps}|f(x)|^p)^{1/p}$, $p\in[1,\infty]$.
If $\eps=0$, $B^{\alpha,\eps}_{p,q}$ is the classical Besov space
$B^{\alpha}_{p,q}$ on $\mT^d$. We also set $H^{\alpha,\eps}=B^{\alpha,\eps}_{2,2}$ and $\bC^{\alpha,\eps}=B^{\alpha,\eps}_{\infty,\infty}$ for $\alpha\in\mR$.

To compare with the definition of Besov spaces on $\eps\mZ^d$, we also introduce the following weighted Besov spaces on
$\eps\mZ^d$  given by the norm
$$\|f\|_{B^{\alpha,\eps}_{p,q}(\eps\mZ^d,\rho)}=\Big(\sum_{-1\leq j\leq j_\eps}2^{\alpha jq}\|\Delta_j^\eps f\|_{L^{p,\eps}(\eps\mZ^d,\rho)}^q\Big)^{1/q}<\infty,$$
for $\alpha\in \mR, p, q\in [1,\infty]$, where $\rho$ is a polynomial weight of the form $\rho(x)=(1+|x|^2)^{-\delta/2}$ for some $\delta\geq0$ and $\|f\|_{ L^{p,\eps}(\eps\mZ^d,\rho)}=(\eps^d\sum_{x\in\eps\mZ^d}|(f\rho)(x)|^p)^{1/p}$, $p\in[1,\infty]$. If $\eps=0$, $B^\alpha_{p,q}(\rho)$ is the usual weighted Besov space. Then it is easy to see that for  functions $f$ on $\Lambda_\eps$ when $\delta p>d$
\begin{align}\label{equi}\|f\|_{B^{\alpha,\eps}_{p,q}}\simeq \|f\|_{B^{\alpha,\eps}_{p,q}(\eps\mZ^d,\rho)},\end{align}
which implies that useful results for functions on $\eps \mZ^d$ from \cite{MP19} can be directly extended to functions on $\Lambda_\eps$.

With the above notations at hand we  extend Lemma \ref{lem:K} to the following discrete version by similar argument as in \cite[Prop. 3.6]{MP19}.

\bl\label{lem:Kd} For $f\in L^{2,\eps}$, $d=2$
\begin{align*}
	K_\eps f(x)=\eps^2	\sum_{y\in \eps \mathbb{Z}^2} K_\eps(x-y)f(y), \quad K_\eps(x-y)=\delta(x-y)+L_\eps(x-y),\quad x,y\in \Lambda_\eps,
\end{align*}
with $L_\eps\in L^{p,\eps}(\eps \mathbb{Z}^2), p\geq 2$, where we view $f$ as periodic function on $\eps \mathbb{Z}^2$.
Furthermore, it holds that for $\gamma\in\mR$, $\delta>0$
\begin{align*}
	\|K_\eps f\|_{H^{\gamma,\eps}}\lesssim \|f\|_{H^{\gamma,\eps}},\quad	\|L_\eps f\|_{H^{2+\gamma-\delta,\eps}}\lesssim \|f\|_{H^{\gamma,\eps}},
\end{align*}
where $L_\eps f(x)=	\eps^2\sum_{y\in \eps \mathbb{Z}^2} L_\eps(x-y)f(y)$, $x\in\Lambda_\eps$.
\el

\subsection{Extension operator}

We follow \cite{MP19} and introduce the following extension operator. Recall $\varphi_j^\eps$ given in \eqref{dyadic}.  We choose a symmetric function $\psi\in C_c(\mathbb{R}^d)$ satisfying the following property:
\begin{itemize}
	\item $\sum_{k\in\mathbb{Z}^d}\psi(\cdot-k)=1,$
	\item $\psi(\eps\cdot)=1$ on supp$\varphi_j$ for $j<j_\eps,$
	\item $(\textrm{supp} \psi\cap \textrm{supp} (\varphi_j)_{\textrm{ext}})\backslash \eps^{-1}\mathbb{T}^d\neq\emptyset \Rightarrow j=j_\eps.$
\end{itemize}
Here  $(\varphi_j)_{\textrm{ext}}$ means periodic extension to $\mathbb{R}^d$ of  $\varphi_j$ on $\mathbb{T}^d$.
In \cite{MP19} such a function $\psi$ is called a smear function. Set $w^\eps=\mathcal{F}^{-1}_{\mathbb{R}^d}\psi(\eps\cdot)$ and  define
\begin{align}\label{def:E}
	\cE^\eps f(x)\eqdef\eps^d\sum_{y\in \eps\mZ^d} w^\eps(x-y)f(y), \quad f\in B^{\alpha,\eps}_{p,q}.\end{align}
Here we extend functions on $\Lambda_\eps$ to functions on $\eps\mZ^d $ by periodic extension. We also introduce the following extension operators for functions  $f:(\eps\mZ^d)^k\to \mR$:
\begin{align}\label{def:Ek}
	\cE_k^\eps f(x_1,\dots,x_k)\eqdef\eps^{dk}\sum_{\substack{y_i\in \eps\mZ^d,\\i=1,\dots,k}}\Big(\prod_{i=1}^k w^\eps(x_i-y_i)\Big)f(y_1,\dots,y_k).\end{align}

By \cite[Lemma 2.24]{MP19} $\cE^\eps$ and $\cE^\eps_k$ are operators which are uniformly  bounded in $\eps$  from discrete Besov spaces $B^{\alpha,\eps}_{p,q}$ to continuous Besov spaces $B^{\alpha}_{p,q}$. 

We also recall the following two results from \cite[Lemma A.9, A.10]{SZZ21}, which are useful for 
passing from functions on $\Lambda_\eps$ to distributions in $\cS'(\mT^d)$
as $\eps\to0$. In fact the results in \cite[Lemma A.9, A.10]{SZZ21} hold for functions in $B^{\alpha,\eps}_{p,q}(\eps\mZ^d,\rho)$. By \eqref{equi} the results also hold for functions in $B^{\alpha,\eps}_{p,q}$.

\bl\label{lem:Epro}
For $p\in [1,\infty]$, $\gamma<0<\alpha$ with $\alpha+\gamma>0$ and $\beta>0$ it holds that
$$\|\cE^\eps(fg)-\cE^\eps f\cE^\eps g\|_{B^{\gamma,\eps}_{p,\infty}}\lesssim o(\eps)\|f\|_{B^{\alpha,\eps}_{p,\infty}}\|g\|_{\bC^{\gamma+\beta,\eps}}.$$
\el

\bl\label{Cepro} Assume that $(\m-\Delta_\eps)u_\eps=f_\eps$.
Then for $\alpha\in \mR$ and $\delta>0$
$$\|\cE^\eps u_\eps-(\m-\Delta)^{-1}\cE^\eps f_\eps\|_{\bC^{\alpha+2-\delta}}\lesssim \eps^\delta \|f_\eps\|_{\bC^{\alpha,\eps}}.$$
\el

%\rmb{We first recall  $\cE^\eps,$ $\cE^\eps_k$ being extension operator defined in \eqref{def:E} \eqref{def:Ek}. Here we understand this definition for every function on $\Lambda_\eps$ by periodic extension to $\eps \mathbb{Z}^2$. }
We also have similar result for $K_\eps$.

\bl\label{kcon} For $d=2$, $\alpha\in\mR$ and $0<\delta<1$, one has
\begin{equs}
	\|\cE^\eps( K_\eps*_\eps f_\eps)-K*\cE^\eps f_\eps\|_{H^{\alpha-\delta}}&\lesssim \eps^{\delta/2}\|f_\eps\|_{H^{\alpha,\eps}},
\\
	\|\cE^\eps( L_\eps*_\eps f_\eps)-L*\cE^\eps f_\eps\|_{H^{\alpha+2-\delta}}&\lesssim \eps^{\delta/2}\|f_\eps\|_{H^{\alpha,\eps}},
\end{equs}
where $*_\eps$ is the discrete convolution on $\eps\mZ^2$, i.e. $f*_\eps g=\eps^2 \sum_{y\in \eps\mZ^2}f(\cdot-y)g(y)$. 
\el
\begin{proof} %The proof follows similarly as \cite[Lemma A.10]{SZZ21}. Let $u_\eps$ satisfy that $(I+C^2_\eps)u_\eps=f_\eps$, where we abuse the notation $C_\eps^2 u_\eps=C_\eps^2*u_\eps$.
For the first result, by \eqref{equi} and \cite[Lemma 2.24]{MP19} we only need to prove
\begin{align*}
	\|\cE^\eps( K_\eps*_\eps f_\eps)-K*\cE^\eps f_\eps\|_{B^{\alpha-\delta}_{2,2}(\rho)}\lesssim \eps^{\delta/2}\|\cE^\eps f_\eps\|_{B^{\alpha}_{2,2}(\rho)},
\end{align*}
where $\rho=(1+|x|^2)^{-1}$. 
From the definition of Besov space we have
\begin{align*}
	&\|\cE^\eps( K_\eps*_\eps f_\eps)-K*\cE^\eps f_\eps\|^2_{B^{\alpha-\delta}_{2,2}(\rho)}
\\&\lesssim \sum_j2^{2(\alpha-\delta)j}\|\Delta_j(\cE^\eps( K_\eps*f_\eps)-K*\cE^\eps f_\eps)\|_{L^2(\rho)}^2.
\end{align*}
By the definition of $\cE^\eps$ only $j\lesssim j_\eps$ contributes. 
By \cite[(33)]{MP19},
$$
\mathcal{F}_{\mR^2}(\cE^\eps (K_\eps*f_\eps))=\psi(\eps\cdot)(\mathcal{F}K_\eps\mathcal{F}f_\eps)_{\textrm{ext}},\qquad \mathcal{F}_{\mR^2}( K*\cE^\eps f_\eps)=\mathcal{F}_{\mR^2}K\psi(\eps\cdot)(\mathcal{F}f)_{\textrm{ext}}.
$$
Here $g_{\textrm{ext}}\in \mathcal{S}'(\mR^2)$ is the periodic extension of $g\in\mathcal{S}'(\eps^{-1}\mathbb{T}^2)$ (see \cite[(11)]{MP19}).
	%Then by Lemma \ref{lem:K} it is sufficient to prove
	%\begin{align}\label{estc}\|(I+C^2_\eps)\cE^\eps u_\eps-(I+C^2)\cE^\eps u_\eps\|_{H^{\alpha-\delta}}\lesssim \eps^\delta\|f_\eps\|_{H^{\alpha,\eps}}.
	%\end{align}

	 Moreover, for $j\lesssim j_\eps$
\begin{align*}
\Delta_j(\cE^\eps( K_\eps*f_\eps)-K*\cE^\eps f_\eps)
&=\mathcal{F}^{-1}_{\mR^2}\big(\varphi_j\psi(\eps\cdot)[(\mathcal{F}K_\eps)_{\textrm{ext}}-\mathcal{F}_{\mR^2}K](\mathcal{F}f_\eps)_{\textrm{ext}}\big)
\\
&=\mathcal{F}^{-1}_{\mR^2}[\phi_j(\mathcal{F}K_\eps)_{\textrm{ext}}-\phi_j\mathcal{F}_{\mR^2}K]*\Delta_j\cE^\eps f_\eps.
\end{align*}
Here $\phi_j=\sum_{i:|i-j|\leq1}\varphi_i=:\phi(2^{-j}\cdot)$. Then it suffices to prove
\begin{equation}\label{zjj1}
	\|(1+|x|^2)\mathcal{F}^{-1}_{\mR^2}[\phi_j(\mathcal{F}K_\eps)_{\textrm{ext}}-\phi_j\mathcal{F}_{\mR^2}K]\|_{L^1}\lesssim \eps^{\delta/2} 2^{j\delta}.\end{equation}
By the following calculations 
\begin{align*}
&\|(1+|x|^2)\mathcal{F}^{-1}_{\mR^2}
	[\phi_j(\mathcal{F}K_\eps)_{\textrm{ext}}-\phi_j\mathcal{F}_{\mR^2}K]\|_{L^1}
\\ =\,&\|(1+|2^{-j}x|^2)\mathcal{F}^{-1}_{\mR^2}
[\phi(\mathcal{F}K_\eps)_{\textrm{ext}}(2^j\cdot)-\phi\mathcal{F}_{\mR^2}K(2^j\cdot)]\|_{L^1}
\\\lesssim \, & \|(1+\Delta)^3[\phi(\mathcal{F}K_\eps)_{\textrm{ext}}(2^j\cdot)-\phi\mathcal{F}_{\mR^2}K(2^j\cdot)]\|_{L^\infty},
\end{align*}
\eqref{zjj1} follows from
\begin{align}\label{estc} |D^m[\phi(\mathcal{F}K_\eps)_{\textrm{ext}}(2^j\cdot)-\phi\mathcal{F}_{\mR^2}K(2^j\cdot)]|\lesssim \eps^{\delta/2} 2^{j\delta}, \qquad m\leq 6,m\in\mN. 
\end{align}
Now we prove \eqref{estc}. It is easy to see that
	$$\mathcal{F}K_\eps(\xi_1,\xi_2)= \frac1{1+\mathcal{F}C^2_\eps(\xi_1,\xi_2)},\quad \mathcal{F}_{\mR^2}K(\xi_1,\xi_2)= \frac1{1+\mathcal{F}_{\mR^2}C^2(\xi_1,\xi_2)},$$
with
$$\mathcal{F}C_\eps^2=\cF C_\eps*_\eps\cF C_\eps,\quad \cF_{\mR^2} C^2=\cF_{\mR^2} C*\cF_{\mR^2} C,$$
and
	$$\mathcal{F}C_\eps(\xi_1,\xi_2)= \frac1{\mm+4(\sin^2(\eps\pi\xi_1)+\sin^2(\eps\pi\xi_2))/\eps^2},\quad \mathcal{F}_{\mR^2}C(\xi)=\frac1{\mm+4\pi^2|\xi|^2}.$$
 	Then, for any $0<\eta<1$ on the support of $\cE^\eps$, 
	\begin{equation}\label{A.9}
		\aligned
		&\Big|\mathcal{F}C_\eps(\xi_1,\xi_2)-\frac1{\mm+4\pi^2|\xi|^2}\Big|
		\\=\,&\Big|\frac{4(\sin^2(\eps\pi\xi_1)+\sin^2(\eps\pi\xi_2))/\eps^2-4\pi^2|\xi|^2}{(\mm+4(\sin^2(\eps\pi\xi_1)+\sin^2(\eps\pi\xi_2))/\eps^2)(\mm+4\pi^2|\xi|^2)}\Big|
		\\\lesssim\,&\frac{ |\eps\xi|^\eta|\xi|^2}{(\m+4(\sin^2(\eps\pi\xi_1)+\sin^2(\eps\pi\xi_2))/\eps^2)(\m+4\pi^2|\xi|^2)}
		\\\lesssim\,&\frac{ |\eps\xi|^\eta}{\m+4\pi^2|\xi|^2}.
		\endaligned
	\end{equation}
	Here in the second last  step we used $|\frac{\sin ^2x}{x^2}-1|\lesssim |x|^{\eta}$ and in the last step we used the fact that on the support of $\cE^\eps$, $(\sin^2(\eps\pi\xi_1)+\sin^2(\eps\pi\xi_2))/\eps^2\gtrsim |\xi|^2$ by \cite[Lemma 3.5]{MP19}. \eqref{A.9} implies that on the support of $\cE^\eps$
	$$\Big|\mathcal{F}C_\eps^2(\xi)-\mathcal{F}_{\mR^2}C^2(\xi)\Big|\lesssim\frac{ |\eps\xi|^\eta|\xi|^\kappa}{\m+4\pi^2|\xi|^2},$$
	for $\kappa>0$, which implies \eqref{estc} for $m=0$. 
	We can use similar arguments as in the proof of \cite[Lemma 3.2]{ZZ18} to estimate further derivatives, which implies \eqref{estc}  and the  result for $K$ holds. Regarding $L$, by  Fourier transform as in Lemma \ref{lem:K} the second result follows similarly.
\end{proof}

\bibliographystyle{alphaabbr}
\bibliography{Reference}

\end{document}